\documentclass[12pt,twoside,reqno]{amsart}

\usepackage{mathpazo,amsfonts,nicefrac}
\usepackage[letterpaper, portrait, margin=1in]{geometry}
\usepackage{amsmath}
\usepackage{amssymb}
\usepackage{amsfonts}
\usepackage{fancyhdr}
\usepackage{MnSymbol}
\usepackage{mathrsfs}
\usepackage{indentfirst}
\usepackage{mathtools} 
\usepackage{commath}
\usepackage{soul}
\usepackage{upgreek} 
\usepackage{graphicx} 
\usepackage[rightcaption]{sidecap}
\usepackage{wrapfig}
\usepackage{caption}
\usepackage{theoremref}
\usepackage{cancel}
\usepackage{amsthm}
\usepackage{xcolor}
\usepackage[final]{hyperref}
\usepackage{float}
\usepackage{comment}

\hypersetup{unicode= false, colorlinks=true, linkcolor=blue,
	anchorcolor=blue, citecolor=green, filecolor=red, menucolor=blue, urlcolor=blue}

\newtheorem{theorem}{Theorem}[section]

\newtheorem{lemma}[theorem]{Lemma}
\newtheorem{definition}[theorem]{Definition}
\newtheorem{example}[theorem]{Example}
\newtheorem{corollary}[theorem]{Corollary}
\newtheorem{remark}[theorem]{Remark}

\pagestyle{fancy}
\fancyhf{}
\rfoot{\thepage}

\def\C{{\mathbb{C}}}
\def\R{{\mathbb{R}}}

\def\BB{{\mathcal{B}}}

\setlength{\footskip}{13.0pt}

\title{Periodic approximations in inverse spectral problems \\ for canonical Hamiltonian systems}

\begin{document}

\setlength{\footskip}{14.0pt}.

\thispagestyle{empty}

\author{A.~Poltoratski}
	\address{University of Wisconsin\\ Department of Mathematics\\ Van Vleck Hall\\
		480 Lincoln Drive\\
		Madison, WI  53706\\ USA }
	\email{poltoratski@wisc.edu}
	\thanks{The first author is supported by
		NSF Grant DMS-1954085.}
		
\author{A. R.~Zhang}
	\address{University of Wisconsin\\ Department of Mathematics\\ Van Vleck Hall\\
		480 Lincoln Drive\\
		Madison, WI  53706\\ USA }
	\email{ashleyrzhang@wisc.edu}

\begin{abstract}
    This note is devoted to inverse spectral problems for canonical Hamiltonian systems on
    the half-line. An approach to inverse spectral problems based on the use of truncated Toeplitz operators 
    has been especially effective in the case when the spectral measure of the system is a locally finite periodic measure (see \cite{MP}). In this note we extend the periodic algorithm to the case of non-periodic measures by considering periodizations of a spectral measure and
    showing that the Hamiltonians corresponding to the periodizations converge to
    the Hamiltonian of the original measure.
\end{abstract}
	
	\maketitle

\section{Introduction}

 This paper studies spectral problems for canonical Hamiltonian systems on a half-line. A standard tool in the study of such problems is the Krein-de Branges (KdB) theory. The theory was developed in the middle of the 20th century by M. G. Krein, who noticed multiple connections between structural problems in certain spaces of entire functions and spectral problems for canonical systems. L. de Branges later developed the complex analytic part of the theory. The basics of KdB-theory and further references can be found in the original book by de Branges \cite{dB}, a chapter in the book by Dym and McKean \cite{DM}, as well as in recent monographs by C. Remling \cite{Remling} and R. Romanov \cite{Rom}. In the last 20 years, the theory experiences a new peak of activity because of its connection to other parts of analysis, such as the theory of orthogonal polynomials and the non-linear Fourier transform (see for instance \cite{Denisov, Scatter}, and further connections to other areas of mathematics such as number theory,  random matrices and zeros of the zeta function (\cite{Burnol, Lagarias1, Lagarias2, Benedek}).

Canonical systems are $2 \times 2$ differential equation systems of the form \begin{equation} \label{CS}
    \Omega \Dot{X}(t) = z H(t) X(t), \ \ t \in (t_-, t_+),\ \ -\infty < t_- < t_+ \leqslant \infty,
\end{equation}
where $z\in\C$ is a spectral parameter,
\begin{equation*}
    \Omega = \begin{pmatrix} 0 & 1 \\ -1 & 0 \end{pmatrix}
    \end{equation*}
    is the symplectic matrix,
\begin{equation*}
   H(t) = \begin{pmatrix} h_{11}(t) & h_{12}(t) \\ h_{21}(t) & h_{22}(t) \end{pmatrix}
\end{equation*}
is a given matrix-valued function called the Hamiltonian of the system and
\begin{equation*}
    X(t) = \begin{pmatrix} u(t) \\ v(t) \end{pmatrix}, 
    \end{equation*}
    is the unknown vector-function.

We make the following assumptions on the Hamiltonian $H(t)$:

\begin{itemize}
    \item $H(t) \in \mathbb{R}^{2 \times 2}$,
    \item $H(t) \in L^1_{loc}(\mathbb{R})$,
    \item $H(t) \geqslant 0$ almost everywhere,
    \item $\det(H) \neq 0$ almost everywhere.
\end{itemize}

Our fourth condition implies in particular that the system
does not have any 'jump intervals' (subintervals of $(t_-,t_+)$ on which $H$ is a constant matrix of rank 1).

A new approach to inverse spectral problems for canonical systems was recently used in 
\cite{BR, Bessonov, MP}. The approach is based on the study of truncated Toeplitz operators 
with symbols equal to the spectral measures of canonical systems. It allows one to extend the set of systems considered in classical texts by Borg, Marchenko and Gelfand-Levitan, and to find new explicit examples of solutions for inverse spectral problems. The Toeplitz approach applies to systems
whose spectral measures belong to the PW-class, the class
of sampling measures in Paley-Wiener spaces, which is significantly
broader than the class of spectral measures of Schr\"odinger
operators and Dirac systems with summable potentials considered in classical theory.

The new approach produces an especially clear finite-dimensional algorithm in the case when the spectral measure is a locally finite periodic measure, see \cite{MP}. Our goal in this note is to extend
this case of the algorithm beyond the class of periodic measures. For a general spectral measure
$\mu$ one can consider its 'periodization' $\mu_T$ (see Section \ref{Periodization}) and apply the tools for the periodic case to find the corresponding Hamiltonian $H_T$.
Our main results, theorems \ref{PW}, \ref{decay} and \ref{polygrowth}, say that the Hamiltonians $H_T$ converge to the Hamiltonian $H$ corresponding to $\mu$, as $T\to\infty$, for certain classes of measures. Theorem \ref{PW} shows convergence in the PW-class
while  theorems \ref{decay} and \ref{polygrowth} show
that in some instances the periodization approach  works beyond the PW-class. To our knowledge, apart
from general existence and uniqueness results,
spectral problems outside of PW-class have not have not been treated in the literature previously.

The convergence of $H_T$ to $H$ established in our main results is weak convergence on test
functions, see section \ref{Periodization}. Numerical evidence suggests that for a general
locally summable Hamiltonian these statements
cannot be strengthened to convergence in $L^1_{loc}$ or other kinds of stronger convergence
without additional assumptions on the system. Establishing exact metrics of convergence in various classes of systems seems to be an interesting open question.

The paper is organized as follows: In section \ref{KdBTheory}, we review the basics of Krein-de Branges theory of canonical systems. In section \ref{MPAlgorithm}, we revisit the inverse spectral problem algorithm for periodic Paley-Wiener measures \cite{MP}. In section \ref{Periodization}, we present our main results. Throughout the paper we supplement our statements with examples of solutions to the inverse spectral problem. In particular, section \ref{Periodization} contains
new examples of convergence of periodizations illustrating theorems \ref{PW}, \ref{decay} and \ref{polygrowth}.

\section{Canonical systems and de Branges spaces}\label{KdBTheory}

Via the change of variable $ds=\det H(t) dt$, we can normalize the system to satisfy $\det(H(s)) = 1$ almost everywhere. We call such systems det-normalized and assume this normalization throughout the rest of the paper.

A solution to \eqref{CS} is a $C^2-$function 
$$X(t) = X_z(t) = \begin{pmatrix} u_z(t) \\v_z(t) \end{pmatrix}$$
on $(t_-,t_+)$ satisfying the equation. An initial value problem (IVP) for \eqref{CS} is given by an initial condition $X(t_-) = c$, $c \in \mathbb{R}^2$. 
Every IVP for \eqref{CS} has a unique solution $X_z(t)$ on $(t_-, t_+)$, see for instance \cite{Remling}.

We use $H^2(\mathbb{C}_+)$ to denote the Hardy space on the upper half-plane $\mathbb{C}_+$: \begin{equation*}
    H^2(\mathbb{C}) = \{ f \in \mathcal{H}(\mathbb{C_+})~|~ \sup_{y > 0} \int_{\mathbb{R}} \abs{f(x + iy)}^2 < \infty\},
\end{equation*}
where $\mathcal{H}(C_+)$ is the set of all analytic functions in $\mathbb{C_+}$.

An entire function $E(z)$ is called an \textit{Hermite-Biehler} function if $|E(z)| > |E(\overline{z})|$ for $z \in \mathbb{C}_+$.
Given an entire function $E(z)$ of Hermite-Biehler class, define the \textit{de Branges' space} $B(E)$ based on $E$ as \begin{equation*}
    B(E) = \{F \text{~entire}\ |\ F/E, F^\#/E \in H^2(\mathbb{C_+})\},
\end{equation*}
where $F^\#(z) = \overline{F(\overline{z})}$.  The space $B(E)$ becomes a Hilbert space when endowed with the scalar product inherited from $H^2(\C_+)$: \begin{equation*}
    [F, G] = \int_{-\infty}^\infty F(t) \overline{G(t)} \frac{dt}{|E(t)^2|}.
\end{equation*}We call an entire function real if it is real-valued on $\R$. Associated with $E$ are two real entire functions  $A = \frac{1}{2}(E + E^\#)$ and $C = \frac{i}{2}(E - E^\#)$ such that $E=A-iC$.
De Branges' spaces are reproducing kernel Hilbert spaces. The reproducing kernels 
\begin{equation*}
    K_\lambda(z) = \frac{\overline{E(\lambda)} E(z) - \overline{E^\#(\lambda)} E^\#(z)}{2\pi i(\overline{\lambda} - z)} = \frac{\overline{C(\lambda)} A(z) - \overline{A(\lambda)}C(z)}{\pi(\overline{\lambda} - z)}
\end{equation*}
are functions from $B(E)$ such that  and $[F, K_{\lambda}] = F(\lambda)$ for all $F \in B(E)$, $\lambda \in \mathbb{C}$.

De Branges' spaces have an alternative axiomatic definition, which is useful in many applications.

\begin{theorem}[\cite{dB}] \label{dBAxioms}
Suppose that $H$ is a Hilbert space of entire functions that satisfies \begin{itemize}
\item $F \in H$, $F(\lambda) = 0$ $\Rightarrow$ $F(z) \frac{z - \overline{\lambda}}{z - \lambda} \in H$ with the same norm,
\item $\forall \lambda \notin \mathbb{R}$, the point evaluation is a bounded linear functional on $H$,
\item $F \to F^\#$ is an isometry.
\end{itemize}
Then $H = B(E)$ for some entire function $E$ of Hermite-Biehler class.
\end{theorem}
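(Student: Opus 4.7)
The plan is to construct an entire Hermite--Biehler function $E$ from reproducing-kernel data of $H$ and then to verify that $H=B(E)$ both as a set and isometrically. I assume $H$ is nontrivial throughout (otherwise there is nothing to prove).

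By the second axiom and the Riesz representation theorem, for each $\lambda\in\C\setminus\R$ there is a reproducing kernel $K_\lambda\in H$ satisfying $\langle F,K_\lambda\rangle=F(\lambda)$ for all $F\in H$. As a preliminary step I would extend the existence of reproducing kernels to real $\lambda$: given $\lambda\in\R$, fix any $G\in H$ with $G(\lambda)\neq 0$ (produced, for instance, from $K_w+K_{\overline w}$ for a suitable $w\in\C_+$ via axiom (iii)); for arbitrary $F\in H$, the function $F-(F(\lambda)/G(\lambda))G$ vanishes at $\lambda$, so one can apply the first axiom together with the isometry $F\mapsto F^\#$ to bound $|F(\lambda)|$ by a constant times $\|F\|$. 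The resulting kernel $K_\lambda$ depends antiholomorphically on $\lambda$.

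The main technical step is to derive the bilinear structure of $K_\lambda$. Fix a base point $w\in\C_+$ with $K_w(w)>0$. For arbitrary $\alpha,\beta\in\C$, apply axiom (i) to linear combinations of $K_\alpha$, $K_\beta$, $K_w$ chosen to vanish at prescribed points: the norm-preserving nature of division by a Blaschke factor $(z-\overline\mu)/(z-\mu)$ forces an algebraic identity on the inner product matrix $\langle K_\alpha,K_\beta\rangle$. Combined with the isometry from axiom (iii), which controls the behavior of these kernels under the involution $F\mapsto F^\#$, this identity produces two real entire functions $A$ and $C$, built from the real and imaginary parts of $K_w$ after a suitable normalization, such that
$$K_\lambda(z)=\frac{\overline{C(\lambda)}A(z)-\overline{A(\lambda)}C(z)}{\pi(\overline\lambda-z)}.$$
Setting $E=A-iC$, the Hermite--Biehler inequality $|E(z)|>|E(\overline z)|$ on $\C_+$ follows from the positivity $\|K_z\|^2=K_z(z)>0$ rewritten through the formula above.

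Since the reproducing kernel of $H$ now coincides with the reproducing kernel of $B(E)$ displayed in the paper, a standard Moore--Aronszajn uniqueness argument for reproducing kernel Hilbert spaces of entire functions identifies $H$ with $B(E)$ as Hilbert spaces. The hardest part will be the derivation of the bilinear formula for $K_\lambda$: this is where axioms (i) and (iii) interact nontrivially and where the Hermite--Biehler function is actually built. The remaining steps --- extending point evaluation to $\R$, verifying the Hermite--Biehler inequality, and concluding $H=B(E)$ from equality of reproducing kernels --- are essentially routine once this central identity is in hand.
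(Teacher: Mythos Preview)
The paper does not prove this theorem; it is quoted from de Branges' book \cite{dB} and stated without proof. Consequently there is no ``paper's own proof'' to compare your proposal against.

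For what it is worth, your outline is in the spirit of the standard argument (de Branges, \emph{Hilbert spaces of entire functions}, Theorem~23): one first shows that point evaluations extend to all of $\C$, then establishes the key bilinear formula for the reproducing kernel, from which the Hermite--Biehler function $E=A-iC$ is read off. Your sketch of the ``main technical step'' is, however, very schematic: the actual derivation of the identity
\[
K_\lambda(z)=\frac{\overline{C(\lambda)}A(z)-\overline{A(\lambda)}C(z)}{\pi(\overline\lambda-z)}
\]
from axioms (i)--(iii) requires a careful algebraic manipulation (in de Branges' treatment it passes through a sequence of lemmas on division by Blaschke factors and the structure of orthogonal complements of kernels), and ``apply axiom (i) to linear combinations chosen to vanish at prescribed points'' does not by itself pin down $A$ and $C$ or explain why the resulting expression is entire in both variables. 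If you intend this as a genuine proof rather than a pointer to the literature, that step needs to be fleshed out substantially.
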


A de Branges space is called short (or regular) if
for any $F\in \BB(E)$ and any  $w\in\C$, $(F(z)-F(w))/(z-w)\in \BB(E)$. Such spaces are related to canonical systems with
locally summable Hamiltonians, see below.

Let $X_z(t) = \begin{pmatrix} u_z(t) \\v_z(t) \end{pmatrix}$  be the unique solution for \eqref{CS}  on $(t_-, t_+)$
 satisfying a self-adjoint initial condition $X(t_-) = x$, $x \in \mathbb{R}^2$. Then for each fixed $t$, the function $E_t(z) = u_z(t) - i v_z(t)$ is an Hermite-Biehler entire function. 

Every det-normalized canonical system delivers a chain of nested de Branges' spaces $B(E_t)$: if  $t_-\leqslant t_1 < t_2 \leqslant t_+$, then $B(E_{t_1}) \subseteq B(E_{t_2})$.
In absence of jump intervals, which follows from the $\det(H) \neq 0$ almost everywhere condition we imposed on $H(t)$, all such inclusions are isometric, see \cite{dB, Remling}.

A positive measure $\mu$ on $\R$  is called the spectral measure of \eqref{CS} corresponding to a fixed self-adjoint boundary condition at $t_-$ if all de Branges' spaces $B(E_t)$, $t \in [t_-, t_+)$ are isometrically embedded into $L^2(\mu)$. Under the restrictions we imposed on the system, such a spectral measure always exists and is unique if and only if \begin{equation*}
    \int_{t_-}^{t_+} \text{trace~} H(t) dt = \infty.
\end{equation*}
The case when the integral above is infinite (and the spectral measure is unique) is called the limit point case, otherwise it is called the limit circle case. 

We say that a positive measure $\mu$ on $\R$ is Poisson-finite if
$$\int_\R\frac{d\mu(x)}{1+x^2}<\infty.$$

Under the condition  $H(t) \in L^1_{loc}$, any spectral measure $\mu$ is Poisson-finite \cite{Remling}.

\section{Inverse spectral problems for periodic \texorpdfstring{$PW-$}{PW}measures}\label{MPAlgorithm}
\subsection{\texorpdfstring{$PW-$}{PW}measures and spaces}

We denote by $PW_t$ the standard Paley-Wiener space, the 
subspace of $L^2(\R)$ consisting of entire functions of exponential type at most $t$. Note that every $PW_t$ 
is a de Branges space, $PW_t=B(E),\ E(z)=e^{-itz}$.
The reproducing kernels of $PW_t$ are the sinc functions,
$$\frac{\sin(t(z-\lambda))}{\pi( z-\lambda)}.$$

\begin{definition}
A positive Poisson-finite measure $\mu$ is sampling for the Paley-Wiener space $PW_t$ if there exist constants $0 < c < C$ such that for all $f \in PW_t$ \begin{equation*}
    c \|f\|_{PW_t} \leqslant \|f\|_{L^2(\mu)} \leqslant C \|f\|_{PW_t}.
\end{equation*}

A positive  measure $\mu$ is a Paley-Wiener measure ($\mu \in PW$) if it is sampling for $PW_t$ for all $t > 0$.
\end{definition}

It is not difficult to show that any $PW$-measure is Poisson-finite. For this and other basic properties of $PW$-measures see \cite{MP}.

For a $ PW$-measure $\mu$ the $L^2(\mu)$-norm is equivalent to the norm in $PW_t$  for all $t>0$. We will denote by $PW_t(\mu)$ the Hilbert space
of $PW_t$-functions equipped with the $L^2(\mu)$ scalar product. Note that each $PW_t(\mu)$ is a de Branges' space because it satisfies the three axioms in theorem \ref{dBAxioms}.

Meanwhile, any Poisson-finite measure, and in particular any $PW$-measure,  is a spectral measure for a canonical system with locally integrable Hamiltonian $H$, see for instance \cite{Remling}. Recall that a fixed canonical system generates a chain of nested de Branges spaces $B(E_t)$ all isometrically embedded in $L^2(\mu)$. Under the restriction $H(t)\in L^1_{loc}$ all spaces are short and any spectral measure is Poisson-finite. Any Poisson-finite measure admits a unique chain of short de Branges' spaces isometrically embedded in $L^2(\mu)$, see \cite{Remling}. It follows that the chains $B(E_t)$ and $PW_t(\mu)$ defined above are the same. As shown in \cite{MP}, any canonical system whose spectral measure belongs to the $PW$-class  satisfies $\det H\neq 0$ a.e. and may therefore be det-normalized.  

We call  a canonical system a $PW$-system if the corresponding  de Branges' spaces $B(E_t) $  are  equal to  $PW_t$ as sets for all $t$.
In this case any spectral measure $\mu$ of the system is a PW-measure. It follows that $PW$-systems can be equivalently defined as those systems whose spectral measures belong to the PW-class.

The class of PW-measures admits the following elementary description. 

\begin{definition}
Let $\mu$ be a positive measure on $\R$. We call an interval $I \subseteq \mathbb{R}$ a $(\mu, \delta)$-interval if \begin{equation*}
    \mu(I) > \delta, \quad \text{and} \quad \abs{I} > \delta,
\end{equation*}
where $\abs{I}$ stands for the length of the interval $I$.
\end{definition}

\begin{theorem}\label{PWcondition}\cite{MP}
A positive Poission-finite measure $\mu$ is a Paley-Wiener measure if and only if \begin{itemize}
    \item $\sup_{x \in \mathbb{R}} (x, x + 1) < \infty$,
    \item For any $d > 0$, there exists $\delta > 0$ such that for all sufficiently large intervals $I$, there exist at least $d \abs{I}$ disjoint $(\mu, \delta)$-intervals intersecting $I$.
\end{itemize}

\end{theorem}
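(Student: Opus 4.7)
The plan is to prove the two directions separately. For necessity, the first condition follows from applying the upper sampling inequality to the Fejér-type function $f_{x_0}(z)=\bigl(\tfrac{\sin\pi(z-x_0-1/2)}{\pi(z-x_0-1/2)}\bigr)^2\in PW_{2\pi}$, whose $PW_{2\pi}$-norm is independent of $x_0$ and which is bounded below by a positive constant on $(x_0,x_0+1)$; this forces $\mu((x_0,x_0+1))\leq C$ uniformly in $x_0$. For the second condition I would argue by contrapositive: if there exists $d_0>0$ such that for every $\delta>0$ arbitrarily large intervals $I$ fail to admit $d_0|I|$ disjoint $(\mu,\delta)$-intervals, then the bulk of $I$ lies in a union of short intervals or intervals of vanishing $\mu$-mass. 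For $t$ sufficiently large one builds a function $g\in PW_t$ of unit norm concentrating on this ``bad'' portion, via an interpolation or multiplier-type construction, whose $L^2(\mu)$-norm tends to zero as $\delta\downarrow 0$, contradicting the lower sampling bound.

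For sufficiency, the upper sampling bound is the Plancherel-P\'olya inequality combined with the first condition: for $f\in PW_t$,
\begin{equation*}
    \|f\|_{L^2(\mu)}^2=\sum_{n\in\mathbb{Z}}\int_{[n,n+1]}|f|^2\,d\mu\leq\Bigl(\sup_n\mu([n,n+1])\Bigr)\sum_{n\in\mathbb{Z}}\sup_{[n,n+1]}|f|^2\leq C_t\|f\|_{PW_t}^2,
\end{equation*}
using the standard Plancherel-P\'olya estimate $\sum_n\sup_{[n,n+1]}|f|^2\leq C_t\|f\|_{L^2(\R)}^2$ for the last step.

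The main obstacle is the lower sampling bound. Fix $t>0$, choose $d>t/\pi$ with room to spare, and let $\delta=\delta(d)$ be supplied by condition two. Shrinking $\delta$ if necessary, assume $\delta\leq\delta_0$ for a threshold $\delta_0(t)\ll 1/t$. A pigeonholing argument then replaces each $(\mu,\delta)$-interval by a subinterval $I_k$ of length $\delta_0$ carrying $\mu$-mass at least some $\delta'>0$, while preserving the lower interval-density $d$ of the collection $\{I_k\}$. Picking one point $x_k\in I_k$ yields a uniformly discrete sequence of lower Beurling density at least $d>t/\pi$. Since $|I_k|\ll 1/t$, a Bernstein-type estimate gives $|f(x)|^2\geq\tfrac{1}{2}|f(x_k)|^2$ on $I_k$ for $f\in PW_t$, whence $\int_{I_k}|f|^2\,d\mu\geq\tfrac{\delta'}{2}|f(x_k)|^2$. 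Summing and invoking Beurling's sampling theorem for $\{x_k\}\subset PW_t$ yields $\|f\|_{L^2(\mu)}^2\geq c\sum_k|f(x_k)|^2\geq c'\|f\|_{PW_t}^2$. The delicate step is the extraction of a genuine sampling sequence of sufficiently high Beurling density from the interval-density hypothesis; this is where the quantitative flexibility built into condition two (the universal quantifier over $d$) is essential.
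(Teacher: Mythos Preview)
The paper does not contain a proof of this theorem: it is stated with the citation \cite{MP} and no proof environment follows. There is therefore nothing in the present paper to compare your argument against line by line.

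That said, the paper does hint at the machinery behind the result. In the proof of Lemma~\ref{uniformlb} the authors appeal to the Fourier-frame results of Ortega-Cerd\`a and Seip \cite{OS} to obtain the two-sided sampling inequality from the $(\mu,\delta)$-interval density condition, and moreover note that the sampling constants $c_t,C_t$ depend only on the parameters $d,\delta,L$. This strongly suggests that the argument in \cite{MP} goes through the frame characterization in \cite{OS} rather than through Beurling's classical sampling theorem for separated point sequences.

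Your outline is reasonable in spirit. The upper bound via Plancherel--P\'olya and the necessity of the first condition via a Fej\'er-type bump are standard and fine. For the lower bound, however, reducing to a separated point sequence and invoking Beurling's theorem is more delicate than you indicate: Beurling's theorem requires strict inequality between the lower Beurling density and $t/\pi$, and your ``pigeonholing'' step producing subintervals of uniform length $\delta_0$ with mass $\geq\delta'$ is asserted rather than justified. The route through \cite{OS} handles the measure-theoretic density condition directly and avoids this extraction step; if you want a self-contained proof along your lines you should either spell out the pigeonholing carefully or consult \cite{OS} for the frame-theoretic argument that the paper implicitly relies on.
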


We say that a measure $\mu$ has locally infinite support if $\text{supp\ }\mu\cap [-C,C]$ is an infinite
set for some $C>0$, or equivalently if $\text{supp\ }\mu$ has a finite accumulation point.
For a periodic measure one can easily deduce the following

\begin{corollary}\label{PWper}
A positive locally finite periodic measure is a Paley-Wiener measure if and only if it has locally infinite support.
\end{corollary}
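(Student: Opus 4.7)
The plan is to derive the corollary directly from the characterization in Theorem \ref{PWcondition}. Let $T$ denote the period of $\mu$. Periodicity together with local finiteness immediately give Poisson-finiteness and the uniform bound $\sup_x\mu(x,x+1)<\infty$, so the first bullet of Theorem \ref{PWcondition} is automatic; the content of the corollary thus lies entirely in the second bullet. The key linking observation is that $\text{supp}\,\mu$ is itself $T$-periodic as a set, so $\mu$ has locally infinite support if and only if $\text{supp}\,\mu\cap[0,T)$ is infinite.

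For the necessary direction I will argue the contrapositive. If $\text{supp}\,\mu\cap[0,T)$ consists of only finitely many points, say $n$, then any $(\mu,\delta)$-interval must contain at least one point of $\text{supp}\,\mu$ (otherwise its mass is zero), so an interval $I$ of length $L$ admits at most $n(L/T+1)$ disjoint $(\mu,\delta)$-intervals. Their asymptotic density is therefore bounded by $n/T$, and the second bullet of Theorem \ref{PWcondition} fails for any $d>n/T$, independently of the choice of $\delta$.

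For sufficiency, assume $\text{supp}\,\mu\cap[0,T)$ is infinite. Given $d>0$, I choose an integer $N>dT$ and distinct support points $y_1<\dots<y_N$ in $[0,T)$. Setting $r=\tfrac14\min_i(y_{i+1}-y_i)$ makes the intervals $J_i=(y_i-r,y_i+r)$ pairwise disjoint, and each carries positive mass $m_i:=\mu(J_i)>0$ since $y_i\in\text{supp}\,\mu$. With $\delta:=\tfrac12\min(r,m_1,\dots,m_N)>0$ each $J_i$ is a $(\mu,\delta)$-interval, and by periodicity so are all translates $J_i+kT$ for $k\in\mathbb{Z}$. These translates provide at least $N\lfloor L/T\rfloor-N$ disjoint $(\mu,\delta)$-intervals inside any interval of length $L$, which exceeds $dL$ once $L$ is sufficiently large. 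This verifies the second bullet.

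No genuine obstacle arises; the proof is careful bookkeeping of how $(\mu,\delta)$-intervals interact with support points under periodicity. The essential input is that collections of disjoint $(\mu,\delta)$-intervals of arbitrarily high density per period can be constructed if and only if the support has an accumulation point within one period, and periodicity then propagates a single-period construction to all of $\mathbb{R}$.
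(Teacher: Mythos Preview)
Your argument is correct and follows precisely the route the paper indicates: the paper does not actually give a proof of Corollary~\ref{PWper}, merely stating that it is an easy deduction from Theorem~\ref{PWcondition}, and your deduction is the natural one.

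One small technical point in the sufficiency direction: your choice $r=\tfrac14\min_i(y_{i+1}-y_i)$ ensures that $J_1,\dots,J_N$ are disjoint within a single period, but does not prevent $J_N$ from overlapping $J_1+T$ (consider $y_1$ close to $0$ and $y_N$ close to $T$, so that the only gap is nearly $T$ and $r$ is close to $T/4$). To make all the translates $J_i+kT$ pairwise disjoint, also include the wraparound gap $T-y_N+y_1$ in the minimum defining $r$. With this adjustment your count of $N\lfloor L/T\rfloor-N$ disjoint $(\mu,\delta)$-intervals goes through as written.
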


\subsection{Truncated Toeplitz operators}
Let $\phi \in L^\infty(\mathbb{R})$. The truncated Toeplitz operator $L_\phi: PW_a \to PW_a$ with the symbol $\phi$ is given by $f \mapsto L_\phi(f) = P(\phi f)$, where $P: L^2(\mathbb{R}) \to PW_a$ is the orthogonal projection.
The definition can be extended to more general symbols via quadratic forms. 
Let $\mu$ be a measure on $\mathbb{R}$. The truncated Toeplitz operator $L_\mu: PW_a \to PW_a$ is given by the relation $$\int_{\mathbb{R}}(L_\mu f) \overline{g} dx = \int_{\mathbb{R}} f \overline{g} d\mu,$$ 
where $f, g \in PW_a$. 

\begin{lemma}[\cite{MP}]
$L_\mu$ is a positive invertible operator in $PW_a$ if and only if $\mu$ is a sampling measure for $PW_a$. Consequently, $L_{\mu}$ is a positive invertible operator in every $PW_a$, $0 < a < \infty$, if and only if $\mu$ is a PW-measure.
\end{lemma}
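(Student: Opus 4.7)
The plan is to reduce everything to a single identity between the quadratic form of $L_\mu$ and the $L^2(\mu)$-norm. Taking $g = f$ in the defining relation of $L_\mu$, one obtains
$$\langle L_\mu f, f\rangle_{PW_a} = \int_\R |f(x)|^2 \, d\mu(x) = \|f\|_{L^2(\mu)}^2$$
for every $f \in PW_a$. In particular the form $f \mapsto \langle L_\mu f, f\rangle_{PW_a}$ is nonnegative; when bounded it determines $L_\mu$ uniquely as a positive self-adjoint operator.

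Next I would square the sampling inequality $c \|f\|_{PW_a} \leq \|f\|_{L^2(\mu)} \leq C \|f\|_{PW_a}$ to translate it into the operator-form statement
$$c^2 \|f\|_{PW_a}^2 \leq \langle L_\mu f, f\rangle_{PW_a} \leq C^2 \|f\|_{PW_a}^2.$$
The right-hand inequality is precisely the boundedness of the associated quadratic form, and via polarization is equivalent to $L_\mu$ being a bounded operator with $\|L_\mu\| \leq C^2$. The left-hand inequality is the operator bound $L_\mu \geq c^2 I$, which combined with the spectral theorem for bounded self-adjoint operators places $\sigma(L_\mu) \subseteq [c^2, C^2]$ and yields invertibility with $\|L_\mu^{-1}\| \leq c^{-2}$. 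This handles the direction sampling $\Rightarrow$ positive and invertible.

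For the converse, suppose $L_\mu$ is a bounded, positive, invertible operator on $PW_a$. Setting $\beta = \|L_\mu\|$ and $\alpha = \|L_\mu^{-1}\|^{-1}$, the functional calculus for positive self-adjoint operators gives $\alpha I \leq L_\mu \leq \beta I$; substituting into the quadratic-form identity recovers the sampling inequality with constants $\sqrt{\alpha}$ and $\sqrt{\beta}$. The \emph{Consequently} clause is then immediate: by definition $\mu \in PW$ iff $\mu$ is sampling for every $PW_a$, $0 < a < \infty$, which by the first part is equivalent to $L_\mu$ being positive and invertible on every $PW_a$.

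There is no serious obstacle here; the whole lemma is essentially a dictionary between sampling inequalities and operator bounds via the quadratic-form identity. The only subtle point worth flagging is that the upper sampling bound is already built into the assumption that $L_\mu$ be a bounded operator at all, which is needed even to speak of its invertibility; without this observation one could worry about $L_\mu$ being defined only as an unbounded form when $\mu$ is not sampling.
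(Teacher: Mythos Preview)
The paper does not include a proof of this lemma; it is stated with a citation to \cite{MP} and left unproved. Your argument is correct and is the standard one: the defining relation of $L_\mu$ immediately gives $\langle L_\mu f,f\rangle_{PW_a}=\|f\|_{L^2(\mu)}^2$, and from there the equivalence between the sampling inequalities and the two-sided operator bound $c^2 I\leqslant L_\mu\leqslant C^2 I$ is a direct translation via the spectral theorem for bounded self-adjoint operators. Your closing remark about the upper sampling bound being implicit in the very boundedness of $L_\mu$ is well taken and worth keeping.
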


Consider a canonical system $\Omega \Dot{X}(t) = z H(t) X(t)$ with a fixed boundary condition at $t = t_-$ and a diagonal locally integrable Hamiltonian \begin{equation*}
    H(t) = \begin{pmatrix} h_{11}(t) & 0 \\ 0 & h_{22}(t)\end{pmatrix}.
\end{equation*}

Such a system corresponds to an even Poisson-finite spectral measure $\mu$ on the real line. Let $B_t = B(E_t)$ be the corresponding chain of de Branges' spaces, with reproducing kernels $K_\lambda^t(z) \in B_t$. 

The following approach to the inverse spectral problem was
presented in 
\cite{MP}. First, one can show that $H(t)$ can be recovered from the reproducing kernels by \begin{equation}\label{h11kernel}
    h_{11}(t) = \pi \frac{d}{dt} \| K_0^t \|^2 = \pi \frac{d}{dt} K_0^t(0), \quad  \text{and~} h_{22}(t) = \frac{1}{h_{11}(t)}.
\end{equation}

The reproducing kernels can be obtained from the sinc functions via truncated Toeplitz operators:
\begin{equation*}
    K_0^t(z) = L_\mu^{-1} \left( \frac{\sin(tz)}{\pi z} \right).
\end{equation*}

We will use the following standard notations for the unitary Fourier transform throughout the paper: \begin{equation*}
\begin{split}
    \mathcal{F}(f)(\xi) &= \hat{f}(\xi) = \frac{1}{\sqrt{2\pi}} \int_{-\infty}^\infty f(x) e^{-ix\xi} dx, \\
    \mathcal{F}^{-1}(f)(x) &= \check{f}(x) = \frac{1}{\sqrt{2\pi}} \int_{-\infty}^\infty f(\xi) e^{ix\xi} d\xi.
\end{split}    
\end{equation*}

Let $\mu$ be a spectral measure of a PW-system \begin{equation*}
    \Omega \Dot{X}(t) = z H(t) X(t).
\end{equation*}

Let $K_0^t$ be the reproducing kernels in the corresponding de Branges' spaces $B_t$. Denote by $f_t$ the Fourier transform of $K_0^t(x)$. Then $f_t(x)$ is supported on $[-t, t]$ and satisfies \begin{equation}\label{conveq}
    f_t \ast \hat{\mu}(x) = 1 \text{~on~} [-t, t].
\end{equation}
We can recover $H(t)$ from $f_t$ by \begin{equation}\label{h11f}
\begin{split}
    h_{11}(t) &= \pi \frac{d}{dt} K_0^t(0) = \pi \frac{d}{dt} \mathcal{F}^{-1}(f_t)(0) = \sqrt{\frac{\pi}{2}} \frac{d}{dt} \int_{-t}^t f_t(s)ds,\\
    h_{22}(t) &= \frac{1}{h_{11}(t)}.
\end{split}
\end{equation}

The following example illustrates the above formulas.

\begin{example}[\cite{MP}]\label{oneplusdelta}
Let $\mu = \sqrt{2\pi} \delta_0 + \frac{1}{\sqrt{2\pi}} m$, where $m$ is the Lebesgue measure on $\mathbb{R}$, and $\delta_0$ is the unit point mass at $0$. Then this is an even PW-measure, so we can recover $H(t)$ using the algorithm above.

Since $\hat{\mu} = m + \delta_0$, the Fourier transform of the reproducing kernel, $f_t$, satisfies \begin{equation*}
    1 = f_t \ast \hat{\mu}(x) = \int_{-t}^t f_t(s) ds + f_t(x), \quad \quad x \in [-t, t].
\end{equation*}

Since $\int_{-t}^t f_t(s) ds$ is a constant with respect to $x$, we have $f_t(x) = c(t) \mathbb{1}_{[-t, t]}(x)$, and
\begin{equation*}
    1 = \int_{-t}^t c(t) ds + c(t) = (2t + 1) c(t) \quad \quad \Rightarrow \quad \quad c(t) = \frac{1}{2t + 1}.
\end{equation*}

Recall that $h_{11}(t) = \sqrt{\frac{\pi}{2}} \frac{d}{dt} \int_{-t}^t f_t(s) ds$. Then \begin{equation*}
\begin{split}
    h_{11}(t) &= \sqrt{\frac{\pi}{2}} \frac{d}{dt} \int_{-t}^t \frac{1}{2t + 1} ds = \sqrt{\frac{\pi}{2}} \frac{d}{dt} \frac{2t}{2t + 1} = \sqrt{2\pi} \frac{1}{(2t + 1)^2},\\
    h_{22}(t) &= \frac{1}{h_{11}(t)} = \frac{(2t + 1)^2}{\sqrt{2\pi}}.
\end{split}
\end{equation*}
\end{example}

\subsection{Even periodic measures}\label{PeriodicPW}

The algorithm presented above takes an especially straightforward form in the case of a periodic spectral measure. If, in addition, one assumes that the measure is even, then the Hamiltonian is diagonal and one only needs to find the diagonal terms. The diagonal case corresponds to Krein's string operators, see for instance \cite{DM}.

If $a_n$ and $b_n$ are trigonometric moments of a given locally finite $2\pi$-periodic measure $\mu$,
$$a_0=\frac {\mu([-\pi,\pi])}{2\pi},\ \ a_n=\frac 1{\pi}\int_{-\pi}^{\pi}\cos (nx) \ d\mu(x),\ \ b_n=\frac 1{\pi}\int_{-\pi}^{\pi}\sin (nx) \ d\mu(x),\ n=1,2,...$$
we will formally write $\mu=\sum_{n = 0}^\infty (a_n cos(nx) + b_n \sin (nx))$.

\begin{theorem}[\cite{MP}] \label{2piThm}
Let $\mu$ be a spectral measure of a system \eqref{CS}. Suppose that $\mu$ is an even $2\pi$-periodic PW-measure, $\mu(x) = \sum_{n = 0}^\infty a_n cos(nx)$.

Consider the infinite Toeplitz matrix \begin{equation*}
    J = \begin{pmatrix} a_0 & \frac{a_1}{2} & \frac{a_2}{2} & \frac{a_3}{2}  & \ldots\\
    \frac{a_1}{2} & a_0 & \frac{a_1}{2} & \frac{a_2}{2}  & \ldots\\
    \frac{a_2}{2} & \frac{a_1}{2} & a_0 & \frac{a_1}{2}  & \ldots\\
    \frac{a_3}{2} & \frac{a_2}{2} & \frac{a_1}{2} & a_0  & \ldots\\
    \vdots & \vdots & \vdots & \vdots   & \ddots\\
    \end{pmatrix}.
\end{equation*}

Denote by $J_n$ the $(n + 1) \times (n + 1)$ sub-matrix on the upper-left corner of $J$. Then $h_{11}(t)$ in the Hamiltonian is a step function with \begin{equation*}
    h_{11}(t) = \Sigma(J_n^{-1}) - \Sigma(J_{n - 1}^{-1}) \; \text{on} \; \Big(\frac{n}{2}, \frac{n + 1}{2}\Big],\ n = 0, 1, 2, \ldots
\end{equation*}
where $\Sigma$ denotes the sum of all elements in the matrix, and $J_{- 1}^{-1} = 0$.
\end{theorem}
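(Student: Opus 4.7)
The plan is to use the convolution equation \eqref{conveq}, $f_t * \hat\mu = 1$ on $[-t,t]$, and exploit the fact that the Fourier transform of an even $2\pi$-periodic measure is supported on $\mathbb{Z}$. Writing $\hat\mu = c_0 \delta_0 + \sum_{k \ge 1} c_k(\delta_k + \delta_{-k})$ with $c_0 = \sqrt{2\pi}\, a_0$ and $c_k = \sqrt{\pi/2}\, a_k$, the equation becomes the lattice convolution $\sum_{k} c_k f_t(x-k) = 1$ for $x \in [-t,t]$. The broad strategy is to show that $f_t$ is a step function on an explicit partition of $[-t,t]$, to compute $\int_{-t}^{t} f_t$ exactly at the endpoint $t = (n+1)/2$, and then to use continuity in $t$ together with a simple piecewise-linearity argument on $(n/2, (n+1)/2]$ to extract $h_{11}$ via the identity $h_{11}(t) = \sqrt{\pi/2}\, \frac{d}{dt}\int_{-t}^{t} f_t$ of \eqref{h11f}.

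For $t \in (n/2, (n+1)/2]$ let $P_t = \{-t+k : k=0,\ldots,n\} \cup \{t-k : k=0,\ldots,n\}$ and let $V_t \subset L^2[-t,t]$ be the subspace of step functions on $P_t$. I would show that the operator $T_t : g \mapsto \bigl(\sum_k c_k g(\cdot-k)\bigr)\big|_{[-t,t]}$ (with $g$ extended by zero) preserves $V_t$: for any integer $k$, each point of $(P_t + k) \cap [-t,t]$ is of the form $-t+m$ or $t-m$ with $m \in \{0,\ldots,n+1\}$, and $m = n+1$ yields the endpoint $t$ or $-t$, hence lies in $P_t$. Since $T_t$ is the Fourier conjugate of the truncated Toeplitz operator $L_\mu$ on $PW_t$, it is invertible by the PW assumption, so the unique solution $f_t = T_t^{-1}\mathbf{1}_{[-t,t]}$ of the convolution equation lies in $V_t$, i.e., $f_t$ is piecewise constant on $P_t$. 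Crucially, the linear system that determines the cell values $v_i$ has coefficients built only from $c_k$ and the combinatorics of $P_t$, so $v_i$ is independent of $t$ on each open interval $(n/2, (n+1)/2)$, while the cell widths (equal to $2t-n$ or $1-(2t-n)$) are affine in $t$. Consequently $\int_{-t}^{t} f_t$ is affine in $t$ on $(n/2,(n+1)/2]$, which by \eqref{h11f} makes $h_{11}$ constant on this interval.

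It remains to evaluate the endpoint integral. At $t = (n+1)/2$ the partition $P_t$ collapses to $\{-t, -t+1, \ldots, t\}$, giving $n+1$ cells of unit width; the convolution equation evaluated at the midpoint of each cell reduces to $Mw = \mathbf{1}$ with $M_{jl} = c_{|j-l|}$, and the constants $c_0 = \sqrt{2\pi}\,a_0$, $c_k = \sqrt{\pi/2}\,a_k$ yield $M = \sqrt{2\pi}\,J_n$. Thus $\int_{-(n+1)/2}^{(n+1)/2} f_{(n+1)/2} = \sum_j w_j = \Sigma(J_n^{-1})/\sqrt{2\pi}$. Applying the analogous identity at $t = n/2$ together with the continuity of $K_0^t(0) = (2\pi)^{-1/2}\int f_t$ in $t$ (standard for reproducing kernels in monotone de Branges chains), the slope of $\int f_t$ on $(n/2,(n+1)/2]$ equals $2(\Sigma(J_n^{-1})-\Sigma(J_{n-1}^{-1}))/\sqrt{2\pi}$, and multiplying by $\sqrt{\pi/2}$ gives $h_{11}(t)=\Sigma(J_n^{-1})-\Sigma(J_{n-1}^{-1})$, with the convention $J_{-1}^{-1}:=0$ covering $n=0$. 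The main obstacle is the first step, namely proving $f_t\in V_t$: this rests on the combinatorial invariance $(P_t+k)\cap[-t,t]\subseteq P_t$ and on correctly matching constants, but once this is in place the remainder of the argument is linear algebra together with a continuity statement.
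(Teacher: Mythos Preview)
Your approach is essentially the same as the paper's: both use the convolution equation \eqref{conveq} together with $\hat\mu=\sqrt{2\pi}\,a_0\delta_0+\sqrt{\pi/2}\sum_{k\ge 1}a_k(\delta_k+\delta_{-k})$, observe that $f_t$ is a step function on the partition of $[-t,t]$ by the points $\{-t+k\}\cup\{t-k\}$, and reduce to the Toeplitz systems $\sqrt{2\pi}\,J_n\alpha=\mathbf 1$. The one substantive difference is in how the second piece of data is obtained: the paper works on the open interval $(n/2,(n+1)/2)$ and solves \emph{both} decoupled systems directly, getting $\alpha=(\sqrt{2\pi})^{-1}J_n^{-1}\mathbf 1$ on the $I_k$'s and $\beta=(\sqrt{2\pi})^{-1}J_{n-1}^{-1}\mathbf 1$ on the complementary $\tilde I_k$'s, so that $\int_{-t}^{t} f_t=(\sqrt{2\pi})^{-1}[\Sigma(J_n^{-1})(2t-n)+\Sigma(J_{n-1}^{-1})(n+1-2t)]$ and one differentiates without any appeal to continuity. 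You instead evaluate only at the right endpoint $t=(n+1)/2$ and import the left value $\Sigma(J_{n-1}^{-1})/\sqrt{2\pi}$ from the adjacent interval via continuity of $t\mapsto K_0^t(0)$. That continuity is true here (density of $\bigcup_{s<t}PW_s$ in $PW_t$ together with the PW sampling bounds), but the paper's direct computation of the $\beta$'s avoids the detour and, as a bonus, yields the explicit form of $f_t$ for all $t$.
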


For reader's convenience we provide the proof. 

\begin{proof}
We obtain the formula for $h_{11}(t)$ in two steps: we first solve for $f_t(s)$ using \eqref{conveq}, then we use \eqref{h11f} to solve for $h_{11}(t)$.

Recall that $f_t$ is supported on $[-t, t]$ and satisfies \eqref{conveq}. Then $f_t$ satisfies \begin{equation*}
    f_t \ast \left(\sqrt{2\pi}a_0 \delta_0 + \sum_{k = 1}^n \sqrt{\frac{\pi}{2}} a_k (\delta_k + \delta_{-k}) \right) (x) = 1, \quad \quad x \in [-t, t],
\end{equation*}
where $n$ is the smallest integer such that $t \leqslant \frac{n + 1}{2}$.

Consider intervals $I_k$ centered at $-\frac{n}{2}, -\frac{n-2}{2}, \cdots,$  $\frac{n-2}{2}, \frac{n}{2}$, and of length $2(t - \frac{n}{2})$, enumerated from left to right. The complement of all the $I_k$'s on $[-t, t]$ consists of $n$ intervals, and we denote those  intervals by $\Tilde{I}_k$, again enumerated from left to right. The convolution equation on $I_k$'s and $\Tilde{I}_k$'s gives rise to $2n + 1$ linear equations.  Using all the equations, we get that $f_t(s)$ takes value $\alpha_k$ on $I_k$, and $\beta_k$ on $\Tilde{I}_k$, where 
\begin{equation*}
\begin{split}
    \begin{pmatrix} \alpha_1 & \alpha_2 & \ldots & \alpha_{n + 1} \end{pmatrix}^T = \frac{1}{\sqrt{2\pi}} J_n^{-1} \times 1,\\
    \begin{pmatrix} \beta_1 & \beta_2 & \ldots & \beta_{n} \end{pmatrix}^T = \frac{1}{\sqrt{2\pi}} J_{n - 1}^{-1} \times 1,
\end{split}
\end{equation*}
and $1$ is a column vector with all coordinates equal to $1$. 

Now we find $\int_{-t}^t f_t(s) ds$: \begin{equation*}
    \int_{-t}^t f_t(s) ds = \sum_{k = 1}^{n + 1} \alpha_k \cdot 2(t - n/2) + \sum_{k = 1}^n \beta_k (n - 2t + 1) = \frac{1}{\sqrt{2\pi}} \left( \Sigma J_n^{-1} (2t - n) + \Sigma J_{n - 1}^{-1} (n - 2t + 1) \right).
\end{equation*}

Therefore, \begin{equation*}
\begin{split}
    h_{11}(t) &= \sqrt{\frac{\pi}{2}} \frac{d}{dt} \int_{-t}^t f_t(s)ds = \sqrt{\frac{\pi}{2}} \frac{d}{dt}\frac{1}{\sqrt{2\pi}} \left( \Sigma J_n^{-1} (2t - n) + \Sigma J_{n - 1}^{-1} (n - 2t + 1) \right)\\
    &= \frac{1}{2} \left(2 \cdot \Sigma J_n^{-1} - 2 \cdot \Sigma J_{n - 1}^{-1} \right) = \Sigma(J_n^{-1}) - \Sigma(J_{n - 1}^{-1}).
\end{split}
\end{equation*}
\end{proof}

\begin{example}[\cite{MP}]
Consider the PW-system with $d\mu(x) = (1 + \cos x )dx$. 
Since this is a $2\pi$-periodic measure with infinite support on $[-\pi, \pi]$, it is a PW-measure. Therefore, we can apply the algorithm in the theorem to recover the corresponding Hamiltonian. The infinite Toeplitz matrix in this case is \begin{equation*}
    J = \begin{pmatrix} 1 & \frac{1}{2} & 0 & 0 & 0 & \ldots \\
    \frac{1}{2} & 1 & \frac{1}{2} & 0 & 0 & \ldots \\
    0 &  \frac{1}{2} & 1 & \frac{1}{2} & 0 & \ldots \\
    0 & 0 &  \frac{1}{2} & 1 & \frac{1}{2} & \ldots \\
    \vdots & \vdots & \vdots & \vdots & \vdots & \ddots
    \end{pmatrix}.
\end{equation*}

We here present snapshots of $f_t$ for $t = 2.5, 2.6, 2.75, 2.85, 3$. 

\begin{figure}[htbp]
\centering
\includegraphics[width=0.3\textwidth]{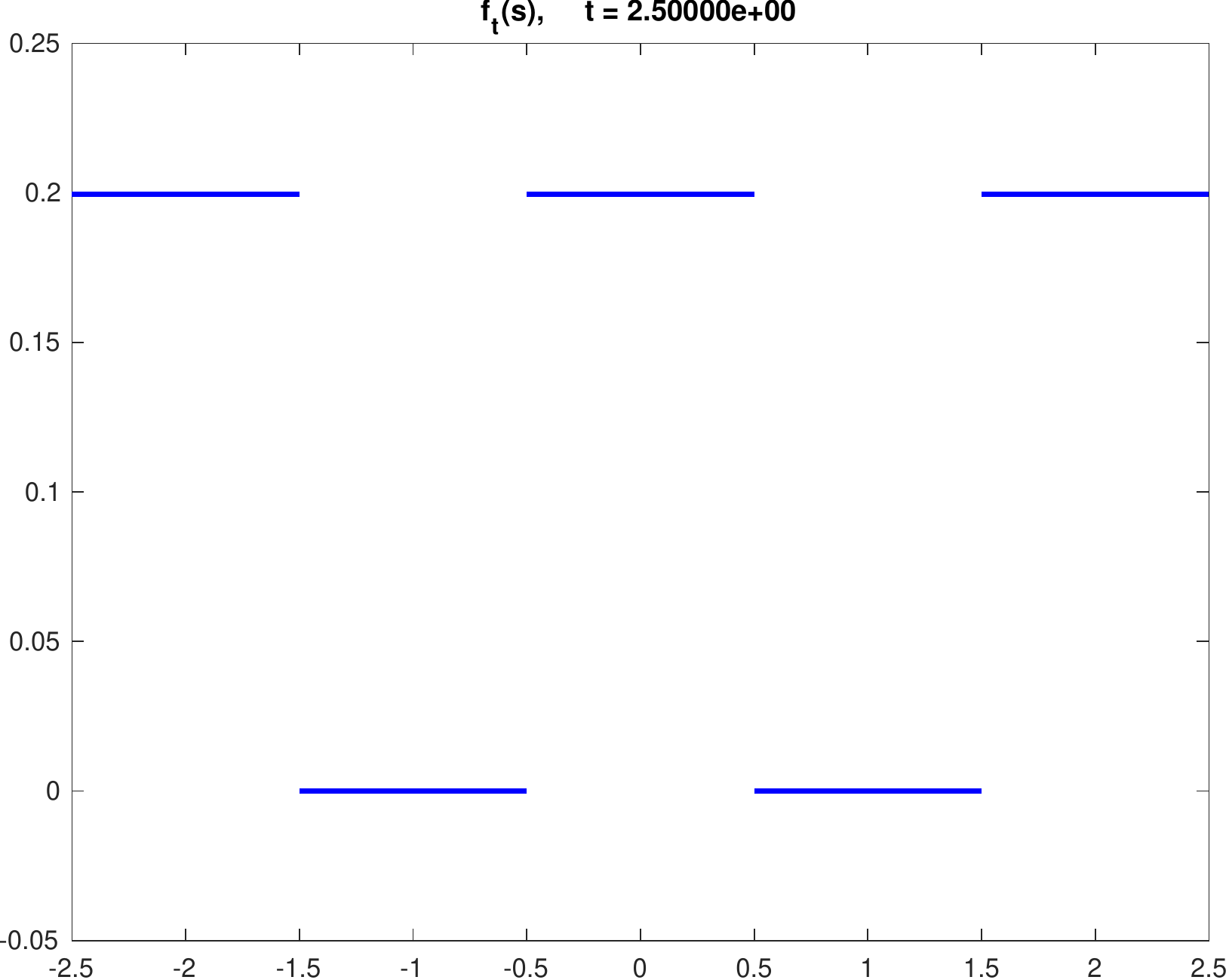}
\includegraphics[width=0.3\textwidth]{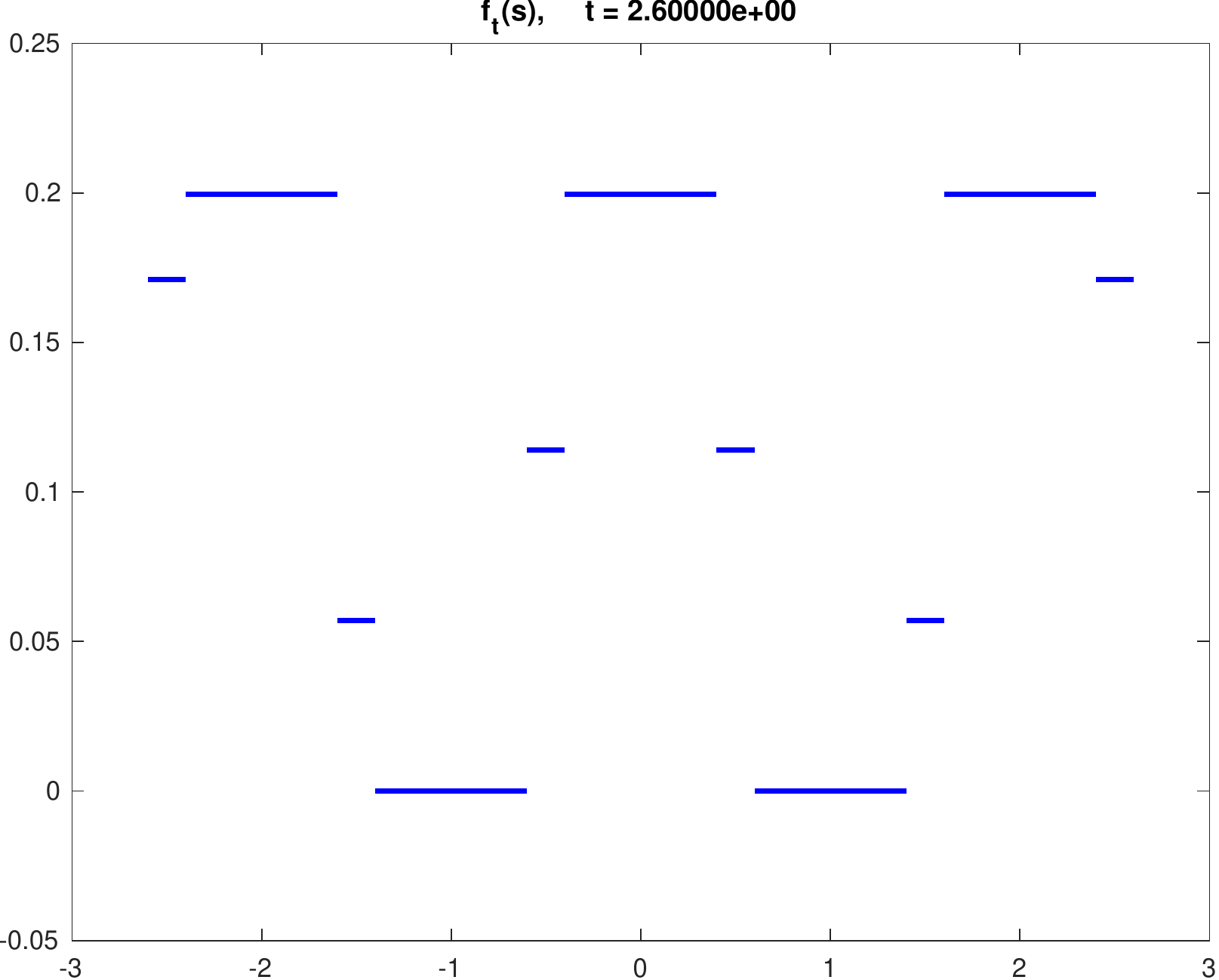}
\includegraphics[width=0.3\textwidth]{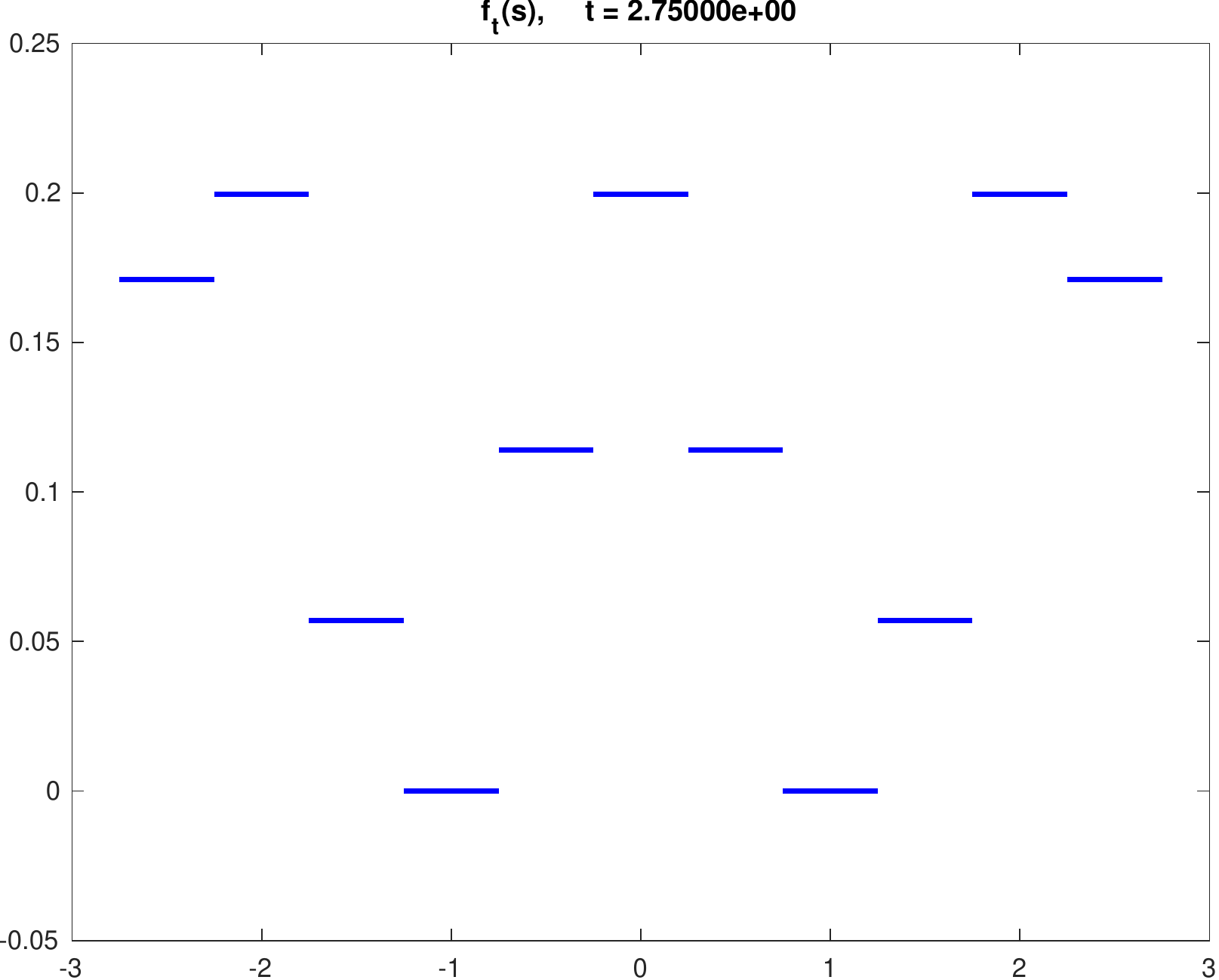}
\includegraphics[width=0.3\textwidth]{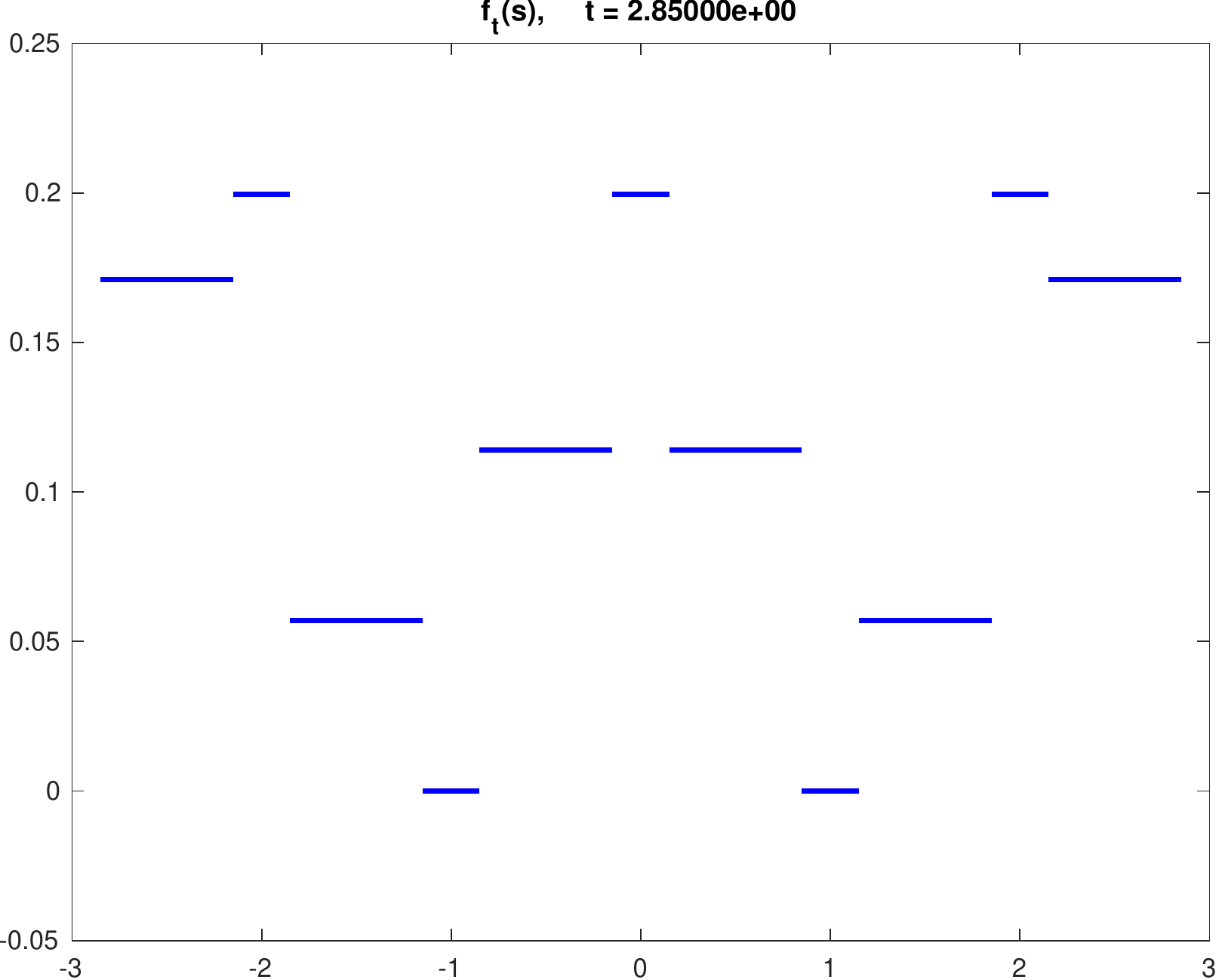}
\includegraphics[width=0.3\textwidth]{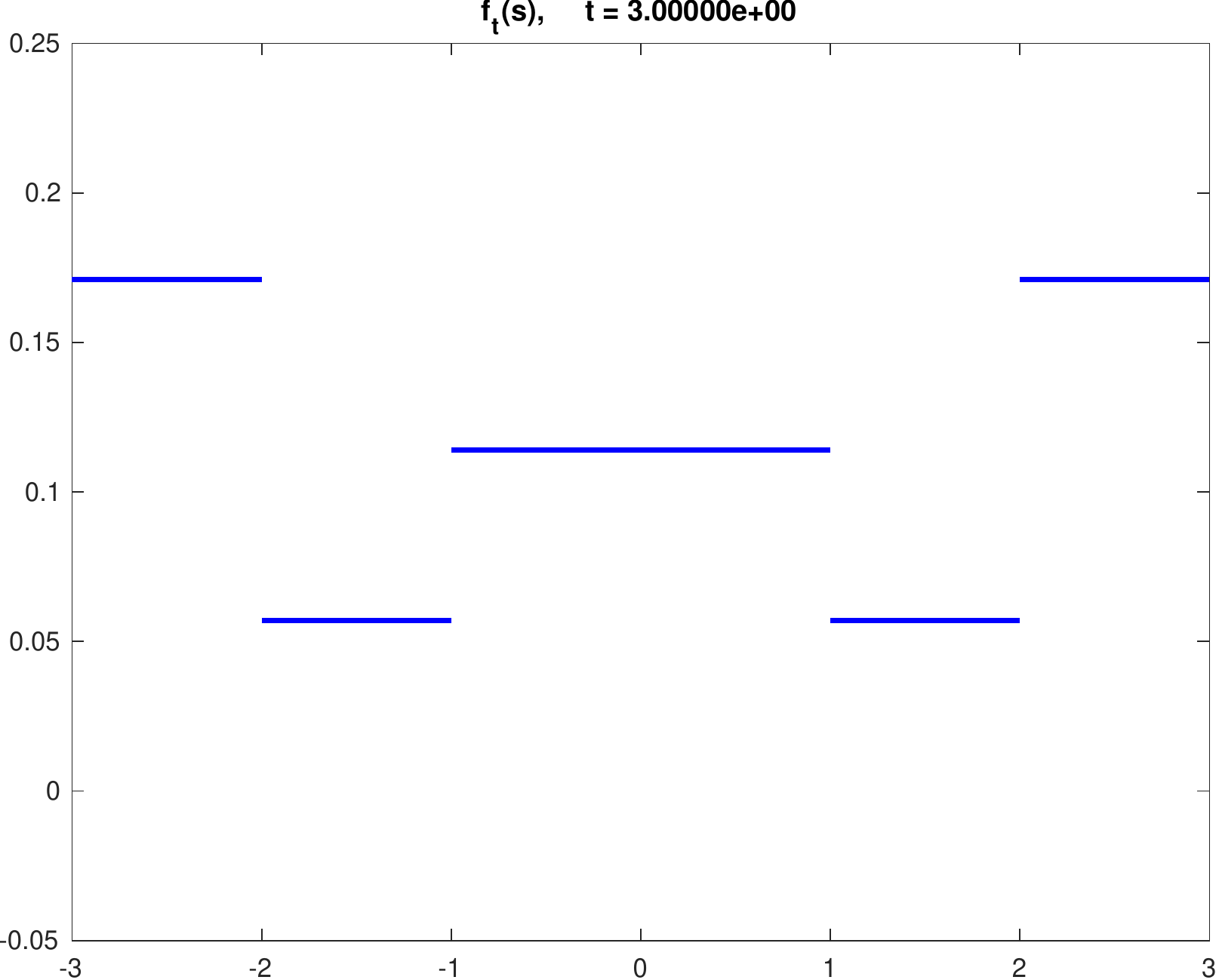}
\caption{First row: left $t = 2.5$, middle $t = 2.6$, right $t = 2.75$. Second row: left $t = 2.85$, right $t = 3$.}
\end{figure}

Based on the algorithm, the corresponding $h_{11}(t)$ is a step function of step size $\frac{1}{2}$, and the steps take value \begin{equation*}
    1, \frac{1}{3}, \frac{2}{3}, \frac{2}{5}, \frac{3}{5}, \ldots, \frac{n}{2n - 1}, \frac{n}{2n + 1}, \frac{n + 1}{2n + 1}, \ldots
\end{equation*}
on intervals 
\begin{equation*}
    \left[0, \frac{1}{2}\right), \left[ \frac{1}{2}, 1 \right), \left[ 1, \frac{2}{3} \right), \ldots
\end{equation*}
\end{example}

Via a change of variable one can extend the last statement to $2T-$periodic measures:

\begin{corollary}\label{2Talgorithm}
Let $\mu$ be an even $2T$-periodic PW-measure, $d\mu(x) = \left( \sum_{n = 0}^\infty a_n cos\left(\frac{2 n \pi}{2T} x\right) \right)dx$. Consider the infinite Toeplitz matrix \begin{equation*}
    J = \begin{pmatrix} a_0 & \frac{a_1}{2} & \frac{a_2}{2} & \frac{a_3}{2} & \frac{a_4}{2} & \ldots\\
    \frac{a_1}{2} & a_0 & \frac{a_1}{2} & \frac{a_2}{2} & \frac{a_3}{2} & \ldots\\
    \frac{a_2}{2} & \frac{a_1}{2} & a_0 & \frac{a_1}{2} & \frac{a_2}{2} & \ldots\\
    \frac{a_3}{2} & \frac{a_2}{2} & \frac{a_1}{2} & a_0 & \frac{a_1}{2}  & \ldots\\
    \vdots & \vdots & \vdots & \vdots & \vdots  & \ddots\\
    \end{pmatrix}.
\end{equation*}

Denote by $J_n$ the $(n + 1) \times (n + 1)$ sub-matrix on the upper-left corner of $J$. Then $h_{11}(t)$ in the Hamiltonian is a step function with \begin{equation*}
    h_{11}(t) = \Sigma(J_n^{-1}) - \Sigma(J_{n - 1}^{-1}) \; \text{on} \; \Big(\frac{n\pi}{2T}, \frac{(n + 1)\pi}{2T}\Big], n = 0, 1, 2, \ldots
\end{equation*}
where $\Sigma$ denotes the sum of all elements in the matrix, and $J_{-1}^{-1} = 0$.
\end{corollary}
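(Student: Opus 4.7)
The plan is to reduce the statement to Theorem \ref{2piThm} by a simple dilation of the spectral axis. Given the even $2T$-periodic PW-measure $\mu$ with density $\sum_{n\geq 0} a_n \cos(n\pi x/T)$, I would define $\nu$ to be the pushforward of $\mu$ under $x \mapsto y = \pi x/T$. A direct calculation shows that $\nu$ is an even $2\pi$-periodic measure with density $\frac{T}{\pi}\sum_{n\geq 0} a_n \cos(n y)$, so its trigonometric moments are $\tilde a_n = (T/\pi)\, a_n$. Since PW-sampling is preserved under dilation (alternatively, $\nu$ is locally finite with locally infinite support, so Corollary \ref{PWper} applies), $\nu$ is again a PW-measure and Theorem \ref{2piThm} is available.

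The next step is to relate the Hamiltonians of the two canonical systems. The substitution $F(z) \mapsto F(yT/\pi)$ maps entire functions of exponential type $t$ to those of type $tT/\pi$ and converts the $L^2(\mu)$ norm into the $L^2(\nu)$ norm, giving an isometric identification of de Branges chains $PW_t(\mu) \cong PW_{tT/\pi}(\nu)$. Writing $\tilde K_0^s$ for the reproducing kernels of the $\nu$-chain at $0$ and setting $s = tT/\pi$, this identification gives $\tilde K_0^s(0) = K_0^{s\pi/T}(0)$. Substituting into \eqref{h11kernel} and applying the chain rule yields the scaling relation
\begin{equation*}
    \tilde h_{11}(s) \;=\; \pi\,\tfrac{d}{ds}\tilde K_0^s(0) \;=\; \tfrac{\pi}{T}\, h_{11}(s\pi/T), \qquad \text{i.e.,} \qquad h_{11}(t) \;=\; \tfrac{T}{\pi}\,\tilde h_{11}(tT/\pi).
\end{equation*}

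Finally, I apply Theorem \ref{2piThm} to $\nu$. Since its Toeplitz matrix is $\tilde J = (T/\pi)\, J$, we have $\tilde J_n^{-1} = (\pi/T)\, J_n^{-1}$ and hence
\begin{equation*}
    \tilde h_{11}(s) \;=\; \tfrac{\pi}{T}\bigl[\Sigma(J_n^{-1}) - \Sigma(J_{n-1}^{-1})\bigr] \qquad \text{on } s \in (n/2,\,(n+1)/2].
\end{equation*}
Substituting this into the scaling relation above, the two factors of $\pi/T$ cancel exactly and give $h_{11}(t) = \Sigma(J_n^{-1}) - \Sigma(J_{n-1}^{-1})$ on $t \in (n\pi/(2T),\,(n+1)\pi/(2T)]$, which is the desired formula. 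I do not expect any essential obstacle; the main task is careful bookkeeping of the scaling factors appearing in three places — the density of $\nu$, the Toeplitz matrix $\tilde J$, and the Hamiltonian $\tilde h_{11}$ — and their mutual cancellation, producing a dimensionless final formula, serves as a consistency check that no factor has been dropped.
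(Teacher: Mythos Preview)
Your proposal is correct and follows exactly the approach the paper indicates: the paper simply remarks ``Via a change of variable one can extend the last statement to $2T$-periodic measures'' and states the corollary without further detail, and you have carried out precisely that change of variable (the dilation $y=\pi x/T$) with careful tracking of the three scaling factors. One tiny wording slip: in your final paragraph the cancelling factors are $T/\pi$ (from the Hamiltonian scaling) and $\pi/T$ (from $\tilde J_n^{-1}$), not two copies of $\pi/T$, but the computation and conclusion are correct.
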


A $2\pi$-periodic measure can be naturally identified with a measure on the unit circle $\mathbb{T}$ in the complex plane.
The inverse spectral problem for canonical systems in the case of a periodic spectral measure is closely related to the theory
of orthogonal polynomials on $\mathbb{T}$. The following theorem illustrates this connection.

\begin{theorem}[\cite{MP}]\label{onpoly}
The value of the $n$-th step of $h_{11}(t)$ is $\varphi_n(1)^2$, where $\varphi_n(z)$ is the $n$-th orthonormal polynomial on $\mathbb{T}$ with $\mu_\mathbb{T} = \frac{1}{2\pi} \mu|_{[-\pi, \pi]}$.
\end{theorem}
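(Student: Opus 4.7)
The plan is to recognize $J_n$ as the moment matrix of $\mu_{\mathbb T}$ on the unit circle and then invoke the standard Christoffel--Darboux representation of the reproducing kernel for orthonormal polynomials. By Theorem \ref{2piThm}, the $n$-th step of $h_{11}$ equals $\Sigma(J_n^{-1})-\Sigma(J_{n-1}^{-1})$, so it suffices to prove the identity
\begin{equation*}
\Sigma(J_n^{-1}) \;=\; \sum_{k=0}^n \varphi_k(1)^2.
\end{equation*}

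First I would verify that $J_n$ is exactly the Toeplitz matrix of Fourier moments $c_{i-j}=\int_{\mathbb T} e^{-i(i-j)\theta}\,d\mu_{\mathbb T}(\theta)$. A direct computation using $d\mu=\sum a_n\cos(nx)\,dx$ and $\mu_{\mathbb T}=\tfrac1{2\pi}\mu|_{[-\pi,\pi]}$ gives $c_0=a_0$ and $c_k=a_{|k|}/2$ for $k\neq 0$, which matches the displayed entries of $J_n$. Note that because $\mu$ is even, the moments $c_k$ are real and $J_n$ is a real symmetric matrix; in particular the orthonormal polynomials $\varphi_k$ have real coefficients, so $\varphi_k(1)\in\mathbb R$ and $|\varphi_k(1)|^2=\varphi_k(1)^2$.

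Next I would express the Christoffel--Darboux kernel in matrix form. Writing $\mathbf v(z)=(1,z,\dots,z^n)^T$ and $\varphi_k(z)=\mathbf c_k^T\mathbf v(z)$, assemble the coefficient vectors into a matrix $C=[\mathbf c_0|\cdots|\mathbf c_n]$. Orthonormality reads $C^*J_nC=I$, hence $CC^*=J_n^{-1}$. Then
\begin{equation*}
K_n(z,w)\;=\;\sum_{k=0}^n\varphi_k(z)\overline{\varphi_k(w)}\;=\;\mathbf v(z)^T CC^*\,\overline{\mathbf v(w)}\;=\;\mathbf v(z)^T J_n^{-1}\overline{\mathbf v(w)}.
\end{equation*}
Evaluating at $z=w=1$ gives $\mathbf v(1)=\mathbf 1$, so
\begin{equation*}
\sum_{k=0}^n\varphi_k(1)^2\;=\;K_n(1,1)\;=\;\mathbf 1^T J_n^{-1}\mathbf 1\;=\;\Sigma(J_n^{-1}),
\end{equation*}
which is the desired identity. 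Subtracting the analogous identity for $n-1$ (with the convention $J_{-1}^{-1}=0$ absorbing the $k=0$ term via $\varphi_0\equiv 1/\sqrt{a_0}$ and $\Sigma(J_0^{-1})=1/a_0=\varphi_0(1)^2$) yields the statement of the theorem.

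The only real obstacle is bookkeeping: one must fix the convention for the moment matrix (entry $(i,j)$ equals $c_{j-i}$ vs.\ $c_{i-j}$) consistently between the orthonormality relation and the CD representation, and check that the factor $1/(2\pi)$ in the definition of $\mu_{\mathbb T}$ lines up so that $c_0=a_0$ rather than $2\pi a_0$. Once these normalizations are verified, the argument is a one-line application of the Christoffel--Darboux identity, and evenness of $\mu$ supplies the reality needed to drop the absolute value.
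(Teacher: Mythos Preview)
Your argument is correct. The identification of $J_n$ with the Toeplitz moment matrix of $\mu_{\mathbb T}$ is right (evenness of $\mu$ makes all moments real, so the convention issues you flag wash out), and the representation $K_n(z,w)=\mathbf v(z)^T J_n^{-1}\overline{\mathbf v(w)}$ of the polynomial reproducing kernel gives $\Sigma(J_n^{-1})=K_n(1,1)=\sum_{k\le n}\varphi_k(1)^2$ immediately.

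The paper itself does not prove the theorem in full; it cites \cite{MP} for a direct proof and then, in the remark following the statement, sketches an alternative purely algebraic verification via the Heine formulae: one writes $\varphi_n(1)^2$ as a ratio of determinants, expands the numerator along its last row of $1$'s, and checks by cofactor bookkeeping that the result equals $\Sigma(J_n^{-1})-\Sigma(J_{n-1}^{-1})$. Your route is genuinely different and more conceptual: rather than manipulating determinants, you recognize $\Sigma(J_n^{-1})$ as the value at $(1,1)$ of the Christoffel--Darboux kernel, so the telescoping is automatic. The Heine approach has the advantage of being self-contained determinant algebra requiring no reproducing-kernel machinery, while yours explains \emph{why} the identity holds and would generalize without change to $\varphi_n(\zeta)^2$ for any $\zeta\in\mathbb T$ (giving $\mathbf v(\zeta)^*J_n^{-1}\mathbf v(\zeta)$ in place of $\Sigma(J_n^{-1})$).
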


The theorem above allows one to solve inverse spectral problems for measures $\mu$ whose orthonormal polynoials are known, without additional calculations. Tables of orthogonal polynomials for most common measures can be found in the literature, see \cite{Simon}
for examples and further references.

\begin{remark}
In addition to the direct proof presented in \cite{MP}, the theorem can also be obtained algebraically from the Heine formulae. The Heine formulae can be found in books on orthogonal polynomials, see for example \cite{Simon}.

Heine formulae for $\Phi_n$ (the $n$-th monic polynomial on $\mathbb{T}$, with $d\mu_{\mathbb{T}} = \sum_{n = 0}^\infty a_n \cos(nx)\frac{dx}{2\pi}$)
\begin{equation*}
\begin{split}
    &\Phi_n(z) = \frac{1}{\det(J_{n - 1})}  \begin{vmatrix}
    a_0 & \frac{a_1}{2} & \frac{a_2}{2} & \ldots & \frac{a_n}{2} \\ \frac{a_1}{2} & a_0 & \frac{a_1}{2} & \ldots & \frac{a_{n - 1}}{2}\\ \frac{a_2}{2} & \frac{a_1}{2} & a_0 & \ldots & \frac{a_{n - 2}}{2} \\ \vdots & \vdots & \vdots & \ddots & \vdots \\ 1 & z & z^2 & \ldots & z^n
    \end{vmatrix},\\
    &\|\Phi_n(z)\|^2 = \frac{\det(J_{n})}{\det(J_{n - 1})}
\end{split}    
\end{equation*}

Like before, we use $\varphi_n(z)$ to denote the $n-$th orthonormal polynomial on $\mathbb{T}$. Then by the formulae above, we have \begin{equation*}
    \varphi_n(1)^2 = \frac{1}{\det(J_{n - 1})\det(J_{n})} \begin{vmatrix}
    a_0 & \frac{a_1}{2} & \frac{a_2}{2} & \ldots & \frac{a_n}{2} \\ \frac{a_1}{2} & a_0 & \frac{a_1}{2} & \ldots & \frac{a_{n - 1}}{2}\\ \frac{a_2}{2} & \frac{a_1}{2} & a_0 & \ldots & \frac{a_{n - 2}}{2} \\ \vdots & \vdots & \vdots & \ddots & \vdots \\ 1 & 1 & 1 & \ldots & 1
    \end{vmatrix}^2,
\end{equation*}
which equals to $\Sigma(J_n^{-1}) - \Sigma(J_{n - 1}^{-1})$.  To verify this, one can, for instance, write $J_{n - 1}^{-1}$ and $J_n^{-1}$ using their co-factor matrices, so that $\Sigma(J_n^{-1}) - \Sigma(J_{n - 1}^{-1})$ could be written down explicitly. For $\varphi(1)^2$, one can then expand the determinant using the last row.
    
\end{remark}

\section{Inverse spectral problems using periodization}\label{Periodization}

Let $\mu$ be a  measure on $\R$. We denote by $\mu_T$ its $2T-$periodization, defined as $\mu_T(S) = \mu(S)$ for $S\subseteq [-T, T]$, and  extended periodically to the rest of $\R$.

By corollary \ref{PWper}, a locally finite periodic measure is a PW-measure if and only if it has infinite support on its period. If a locally finite $\mu$ has infinite support on an interval $[-C, C]$ for some $C > 0$, then $\mu_T$ is a PW-measure for $T > C$. Therefore, we can recover a PW-system corresponding to $\mu_T$ using corollary \ref{2Talgorithm} or theorem \ref{onpoly}. 

{\it Throughout the rest of the paper we denote
by $h_{11}$ and $h_{11}^T$ the  upper-left entries of the Hamiltonians corresponding to  $\mu$ and $\mu_T$ respectively.}

Our goal in this section is to show that $h_{11}^T$ converges to $h_{11}$ as $T\to\infty$ in various classes of spectral measures $\mu$.

Recall that $K_0^t$ are the reproducing kernels in the de Branges' spaces $B_t(\mu)$. Denote by $K_0^{t, T}$ the reproducing kernels in the de Branges' spaces $B_t(\mu_T)$. We begin with the following lemma on the convergence of $h_{11}^T$ to $h_{11}$.

\begin{lemma}\label{SuffCondition}
If $ \lim_{T \to \infty} K_0^{t, T}(0) = K_0^t(0)$ for almost every $t$, then for any
interval $(a,b)\subset \R_+$, 
$$\int_a^b h_{11}^T(t)dt  \to \int_a^b h_{11}(t)dt $$  as $T \to \infty$.
\end{lemma}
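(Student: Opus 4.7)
The plan is to reduce the claim to the convergence of $K_0^t(0)$ at the endpoints $a$ and $b$, and then to promote the almost-everywhere convergence hypothesis to pointwise convergence at these two points by using monotonicity of $K_0^t(0)$ in $t$ together with continuity of the limit.

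First, by the identity \eqref{h11kernel}, $h_{11}(t) = \pi \frac{d}{dt} K_0^t(0)$, and the analogous identity holds for $h_{11}^T$ and $K_0^{t,T}(0)$. Since both Hamiltonians are locally integrable, the functions $\phi(t) := K_0^t(0)$ and $\phi_T(t) := K_0^{t,T}(0)$ are absolutely continuous on $[0,\infty)$, and the fundamental theorem of calculus yields
$$\int_a^b h_{11}(t)\,dt = \pi\bigl(\phi(b)-\phi(a)\bigr),\qquad \int_a^b h_{11}^T(t)\,dt = \pi\bigl(\phi_T(b)-\phi_T(a)\bigr).$$
Thus the conclusion is equivalent to showing $\phi_T(b) - \phi_T(a) \to \phi(b) - \phi(a)$ as $T \to \infty$, which in turn follows if $\phi_T(t_0) \to \phi(t_0)$ for $t_0 = a$ and $t_0 = b$.

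Next I would use the fact that both $\phi$ and $\phi_T$ are monotone non-decreasing in $t$: the chains of de Branges spaces $B_t(\mu)$ and $B_t(\mu_T)$ are nested and isometrically embedded into the respective $L^2$-spaces, so $\phi(t) = K_0^t(0) = \|K_0^t\|^2$ (and similarly $\phi_T(t)$) grows with $t$. Absolute continuity together with $h_{11} \geq 0$ also gives that $\phi$ is continuous on $[0,\infty)$. With these in hand, I would run a standard sandwich argument. Fix $t_0 \in \{a,b\}$ and $\epsilon > 0$; by continuity of $\phi$, choose $t' < t_0 < t''$ with $\phi(t'') - \phi(t') < \epsilon$ and belonging to the full-measure set on which $\phi_T(t) \to \phi(t)$. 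Monotonicity of $\phi_T$ gives $\phi_T(t') \leq \phi_T(t_0) \leq \phi_T(t'')$, and passing to the limit on the outer bounds yields
$$\phi(t') \leq \liminf_{T\to\infty} \phi_T(t_0) \leq \limsup_{T\to\infty} \phi_T(t_0) \leq \phi(t'').$$
Since $\phi(t''), \phi(t')$ differ from $\phi(t_0)$ by at most $\epsilon$, we conclude $\phi_T(t_0) \to \phi(t_0)$. Applying this at $a$ and $b$ and subtracting finishes the proof.

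The main subtlety is justifying the continuity of $\phi$ on which the sandwich argument pivots; this rests on the absolute continuity of $t \mapsto K_0^t(0)$, which is a consequence of the standing assumption $H \in L^1_{loc}$ together with \eqref{h11kernel}. The rest of the argument is purely order-theoretic and avoids any need for dominated convergence or quantitative estimates on reproducing kernels.
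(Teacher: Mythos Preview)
Your first step---reducing $\int_a^b h_{11}^T \to \int_a^b h_{11}$ to $\phi_T(b)-\phi_T(a) \to \phi(b)-\phi(a)$ via the identity $h_{11}(t)=\pi\frac{d}{dt}K_0^t(0)$---is exactly the paper's proof, which consists of nothing more than the two displayed equations
\[
\int_a^b h_{11}^T(t)\,dt = \pi\bigl(K_0^{b,T}(0)-K_0^{a,T}(0)\bigr),\qquad
\int_a^b h_{11}(t)\,dt = \pi\bigl(K_0^{b}(0)-K_0^{a}(0)\bigr).
\]
The paper stops there and does not address the passage from almost-everywhere convergence of $K_0^{t,T}(0)$ to convergence at the two specific endpoints $a,b$; in its applications the pointwise convergence is in fact established for every $t$ (or the conclusion is only used through the subsequent corollary on test functions, where the endpoints may be perturbed into the full-measure set). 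Your sandwich argument, using monotonicity of $t\mapsto K_0^{t,T}(0)$ together with continuity of the limit $t\mapsto K_0^t(0)$, is a genuine addition that closes this gap and makes the lemma hold as stated for \emph{every} interval $(a,b)$. So your proof follows the paper's route but is more complete.
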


\begin{proof} Recall that 
$$\int_a^b h_{11}^T(t)dt = \pi \left( K_0^{b, T}(0) - K_0^{a, T}(0) \right)$$
and
$$\int_a^b h_{11}(t)dt = \pi \left( K_0^{b}(0) - K_0^{a}(0) \right).$$
\end{proof}

Since the Hamiltonian matrix $H(t) \geqslant 0$ almost everywhere, $h_{11} \geqslant 0$ almost everywhere.
For non-negative functions convergence of integrals over intervals, like in the last lemma, 
is equivalent to the convergence on continuous compactly supported functions:

\begin{corollary}
If $ \lim_{T \to \infty} K_0^{t, T}(0) = K_0^t(0)$ for almost every $t$, then
$$\int_0^\infty h_{11}^T(t)\phi(t)dt{\to} \int_0^\infty h_{11}(t)\phi(t)dt$$
as $T\to\infty$, for every continuous compactly supported function $\phi$ on $\R_+$.
\end{corollary}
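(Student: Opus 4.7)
The plan is to bootstrap from the interval-convergence provided by the preceding lemma to convergence against continuous compactly supported test functions by a standard step-function approximation. The key input from non-negativity of $h_{11}$ and $h_{11}^T$ is that approximating $\phi$ by a step function uniformly controls the error.

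First, fix $\phi\in C_c(\R_+)$ with $\mathrm{supp}(\phi)\subset[a,b]$ for some $0\le a<b<\infty$. Given $\epsilon>0$, use uniform continuity of $\phi$ to choose a partition $a=t_0<t_1<\cdots<t_N=b$ such that the oscillation of $\phi$ on each subinterval $(t_{k-1},t_k]$ is at most $\epsilon$, and form the step function
$$\psi_\epsilon(t)=\sum_{k=1}^N \phi(t_{k-1})\,\mathbb{1}_{(t_{k-1},t_k]}(t),$$
so that $\|\phi-\psi_\epsilon\|_\infty\le\epsilon$ and $\mathrm{supp}(\psi_\epsilon)\subset[a,b]$.

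Second, apply lemma \ref{SuffCondition} on each subinterval: as $T\to\infty$,
$$\int_0^\infty h_{11}^T\,\psi_\epsilon\,dt=\sum_{k=1}^N \phi(t_{k-1})\int_{t_{k-1}}^{t_k} h_{11}^T\,dt \longrightarrow \sum_{k=1}^N \phi(t_{k-1})\int_{t_{k-1}}^{t_k} h_{11}\,dt=\int_0^\infty h_{11}\,\psi_\epsilon\,dt.$$
Using the non-negativity of $h_{11}^T$ and $h_{11}$ together with $\|\phi-\psi_\epsilon\|_\infty\le\epsilon$, we estimate
$$\left|\int_0^\infty h_{11}^T(\phi-\psi_\epsilon)\,dt\right|\le \epsilon\int_a^b h_{11}^T\,dt,\qquad \left|\int_0^\infty h_{11}(\phi-\psi_\epsilon)\,dt\right|\le \epsilon\int_a^b h_{11}\,dt.$$

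Third, note that by lemma \ref{SuffCondition} the sequence $\int_a^b h_{11}^T\,dt$ converges, hence is bounded by some constant $M$ independent of $T$ (which also dominates $\int_a^b h_{11}\,dt$). Combining the three displays by the triangle inequality,
$$\limsup_{T\to\infty}\left|\int_0^\infty h_{11}^T\phi\,dt-\int_0^\infty h_{11}\phi\,dt\right|\le 2\epsilon M.$$
Since $\epsilon>0$ was arbitrary, the left-hand side is zero, which is the claim.

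The only real subtlety is ensuring that $\int_a^b h_{11}^T\,dt$ stays uniformly bounded in $T$ — without that, the uniform approximation $\|\phi-\psi_\epsilon\|_\infty\le\epsilon$ cannot be converted into a small error in the integral against $h_{11}^T$. This uniform bound is however free: a convergent numerical sequence is bounded, and lemma \ref{SuffCondition} guarantees the convergence of $\int_a^b h_{11}^T\,dt$ to $\int_a^b h_{11}\,dt$.
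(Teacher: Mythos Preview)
Your proof is correct and takes a genuinely different route from the paper's. The paper argues by weak-$*$ compactness: from the uniform $L^1([a,b])$-bound on $h_{11}^T$ (which you also use), it extracts from any sequence $T_n\to\infty$ a subsequence along which the measures $h_{11}^{T_{n_k}}(t)\,dt$ converge weak-$*$ to some finite measure $\mu$ on $[a,b]$, then shows $d\mu=h_{11}\,dt$ by testing against subintervals via the lemma; since every subsequential limit is the same, the full family converges. Your argument is more elementary and direct: you approximate $\phi$ uniformly by step functions, apply the lemma term by term, and use non-negativity plus the uniform $L^1$-bound to control the remainder. Your approach avoids invoking Banach--Alaoglu and the subsequence principle, at the cost of being slightly less ``soft''; the paper's approach would adapt more readily if one only knew convergence on a smaller algebra of test sets.

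One small point worth making explicit (it affects the paper's proof of the lemma as well): the hypothesis gives convergence of $K_0^{t,T}(0)$ only for almost every $t$, so the lemma as written really delivers $\int_a^b h_{11}^T\to\int_a^b h_{11}$ only when $a,b$ lie in the full-measure set of good points. In your argument you should therefore choose the support interval $[a,b]$ and the partition points $t_0,\dots,t_N$ from that set, which is always possible since it is dense. This does not affect the logic.
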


\begin{proof}
Let $\text{supp} (\phi) \subseteq [a,b]$.
Since $h_{11}^T\geq 0 $
and $\int_a^b h_{11}$ converge, 
$h_{11}^T$ are bounded in $L^1([a,b])$-norm. Therefore, from
any sequence $h_{11}^{T_n},\ T_n\to\infty$,
one can choose a subsequence $h_{11}^{T_{n_k}}$ such that the measures
$\mu_k,\ d\mu_k(t)=h_{11}^{T_{n_k}}(t)dt$ converge in the
$*$-weak topology of the space of finite measures on $[a,b]$ to a finite non-negative measure
$\mu$. If $d\mu\neq h_{11}(t)dt$ then
there exists a subinterval $[c,d]\subseteq [a,b]$ such that 
$$\mu([c,d])=\lim\int_c^dh_{11}^{T_{n_k}}(t)dt\neq \int_c^d h_{11}(t)dt,$$
which contradicts to the last lemma. 
Therefore, $d\mu= h_{11}(t)dt$. We obtain
that any partial limit of $h_{11}^T$
in the $*$-weak topology of the space of measures on $[a,b]$ is equal to $h_{11}$, which implies that $h_{11}^T\to h_{11}$ in the 
$*$-weak topology, i.e., on every continuous function on $[a,b]$ like in the statement.
\end{proof}

Let  $f_T, T>0$
and $f$ be $L^1_{loc}(\R_+)$-functions. We write $f_T \overset{*}{\to} f$ as $T \to \infty$ if $f_T, T>0$
and $f$ satisfy the conclusions of
the last lemma and corollary in place of $h_{11}^T$ and $h_{11}$. Using this
notation, $ \lim_{T \to \infty} K_0^{t, T}(0) = K_0^t(0)$ for almost every $t$ implies $h_{11}^T\overset{*}{\to} h_{11}$
as $T\to\infty$.

\subsection{Convergence for \texorpdfstring{$PW-$}{PW}systems}
First, we study convergence of periodizations in the class of canonical systems whose spectral measures are from the PW-class. 
As was mentioned before, it extends the class of regular systems considered in classical literature and was recently 
studied in \cite{BR, Bessonov, MP}. We will need the following

\begin{lemma}\label{uniformlb}
Let $\mu$ be a PW-measure, and $\mu_T$ be the $2T-$periodic extensions of $\mu$. For a fixed $t>0$, there are constants $c_t,C_t > 0$ such that 
$$c_t \norm{f}_{PW_t} \leqslant \norm{f}_{L^2(\mu_T)}\leqslant C_t\norm{f}_{PW_t}$$ for all $f \in PW_t$ and $T$ sufficiently large.
\end{lemma}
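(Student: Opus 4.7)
The plan is to verify that both structural conditions characterizing PW-measures in Theorem~\ref{PWcondition} pass from $\mu$ to $\mu_T$ \emph{uniformly in $T$}, so that the sampling constants for $PW_t$ provided by $\mu_T$ can be chosen independent of $T$ once $T$ is sufficiently large.

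For the upper bound, I would invoke the Plancherel--P\'olya inequality: for $f \in PW_t$,
$$\sum_{n \in \mathbb{Z}} \sup_{x \in [n, n+1]} |f(x)|^2 \leqslant A_t \|f\|_{PW_t}^2,$$
with $A_t$ depending only on $t$. Since $\mu$ is a PW-measure, the first condition of Theorem~\ref{PWcondition} gives $M := \sup_x \mu((x, x+1)) < \infty$; for any $T \geqslant 1$, every unit interval meets at most two adjacent periods of $\mu_T$, each of which is a copy of $\mu|_{[-T,T]}$, so $\sup_x \mu_T((x, x+1)) \leqslant 2M$ uniformly in $T$. Bounding $\int |f|^2 \, d\mu_T$ by $\sup_x \mu_T((x,x+1))\cdot \sum_n \sup_{[n,n+1]}|f|^2$ then yields $\|f\|_{L^2(\mu_T)} \leqslant C_t \|f\|_{PW_t}$ with $C_t$ independent of $T$.

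The lower bound is the substantive part. Given $d > 0$, the PW-property of $\mu$ provides $\delta > 0$ and $L_0 > 0$ such that every interval $I$ with $|I| \geqslant L_0$ contains at least $d|I|$ disjoint $(\mu, \delta)$-intervals. Because $\mu_T$ agrees with $\mu$ on $[-T, T]$ and is $2T$-periodic, for $T \gg L_0$ the same statement holds for $\mu_T$ with the same $\delta$, up to boundary losses at the seams $\{(2k+1)T : k \in \mathbb{Z}\}$: any interval $I$ contains at most $1 + |I|/(2T)$ such seams, and each can destroy at most one $(\mu, \delta)$-interval, so the effective density of $(\mu_T, \delta)$-intervals in $I$ is $d - o(1)$ as $T \to \infty$. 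Applying the sampling theorem for $PW_t$ from \cite{MP} --- whose constants depend only on $t$, on $M$, and on the functional relation $d \mapsto \delta$, all $T$-independent --- yields $c_t \|f\|_{PW_t} \leqslant \|f\|_{L^2(\mu_T)}$ with $c_t$ independent of $T$ for $T$ large.

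The principal obstacle I anticipate is the clean bookkeeping of these boundary-seam losses together with explicit confirmation that the sampling constants of \cite{MP} depend only on the structural PW-quantities above, not on finer features of $\mu$. A potentially cleaner alternative is to ``fold'' $|f|^2$ onto a single period: setting $g_T(x) = \sum_{n \in \mathbb{Z}} |f(x + 2nT)|^2$, one has
$$\|f\|_{L^2(\mu_T)}^2 = \int_{-T}^T g_T \, d\mu \qquad \text{and} \qquad \|f\|_{PW_t}^2 = \int_{-T}^T g_T(x)\,dx,$$
and since $g_T$ is a non-negative $2T$-periodic trigonometric polynomial of degree $\lesssim tT/\pi$, the lemma reduces to a Marcinkiewicz--Zygmund type sampling inequality for such polynomials against $\mu|_{[-T,T]}$, which the PW-conditions on $\mu$ supply uniformly in $T$.
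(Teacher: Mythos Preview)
Your approach is essentially the paper's: show that the two structural conditions of Theorem~\ref{PWcondition} transfer from $\mu$ to $\mu_T$ with $T$-independent parameters, and then invoke that the sampling constants for $PW_t$ depend only on those parameters. The paper handles both inequalities at once by citing Ortega-Cerd\`a--Seip \cite{OS} (rather than \cite{MP}) for the quantitative dependence of $c_t,C_t$ on $(d,\delta,L)$, and is terser about the seams---it simply asserts that $\mu_T$ satisfies the same $(d,\delta,L)$ density condition once $2T>L$---but the substance is identical to your main argument; your folding alternative is not used.
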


\begin{proof}
By theorem \ref{PWcondition}, for some $d > t$, there exist $\delta=\delta_\mu > 0$ and $L=I_\mu>0$, such that for all intervals $I$ with $\abs{I} > I_\mu$ there exists at least $d \abs{I}$ disjoint $(\mu, \delta_\mu)-$intervals intersecting $I$. 
By results of \cite{OS}, the inequality 
$$c_t \norm{f}_{PW_t} \leqslant \norm{f}_{L^2(\nu)}\leqslant C_t\norm{f}_{PW_t}$$ holds for all
$f\in PW_t$ for any measure $\nu$ satisfying this condition with the same $d,\delta$ and $L$. 
One can show that the constants $c_t, C_t$ can be chosen to be the same for any such measure.
It is left to notice that $\mu_T$ becomes such a measure for $ T > \frac{I_\mu}{2}$. 
\end{proof}

For $PW-$systems we prove the following convergence result. 

\begin{theorem} \label{PW}
Let $\mu$ be an even $PW$-measure with locally infinite support. Then
\begin{equation*}
    h^T_{11}\overset{\ast}{\to} h_{11}\text{ as }T\to\infty.
\end{equation*}
\end{theorem}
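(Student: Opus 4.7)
The plan is to apply Lemma \ref{SuffCondition} and the corollary following it to reduce the statement to showing $K_0^{t,T}(0)\to K_0^t(0)$ as $T\to\infty$ for every $t>0$. The identity $K_0^{t,T}(0)=\|K_0^{t,T}\|^2_{L^2(\mu_T)}$ together with Lemma \ref{uniformlb} gives $\|K_0^{t,T}\|^2_{PW_t}\leq c_t^{-2}K_0^{t,T}(0)$, and
\begin{equation*}
K_0^{t,T}(0)=\sup_{\|f\|_{L^2(\mu_T)}\leq 1}|f(0)|^2\leq c_t^{-2}\cdot\tfrac{t}{\pi},
\end{equation*}
so $\{K_0^{t,T}\}_T$ is bounded in $PW_t$. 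By weak compactness and the standard subsequence argument, it is enough to prove that every weak limit $K^*$ of $\{K_0^{t,T_n}\}$ in $PW_t$ equals $K_0^t$: weak convergence in $PW_t$ implies pointwise convergence (since point evaluations are bounded), so then $K_0^{t,T_n}(0)\to K^*(0)=K_0^t(0)$.

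To identify $K^*=K_0^t$, I would verify $[K^*,f]_\mu=f(0)$ for $f$ ranging over the dense subclass
\begin{equation*}
\mathcal{D}=\{f\in PW_t:\hat f\in C_c^\infty(-t,t)\}.
\end{equation*}
Functions in $\mathcal{D}$ decay faster than any polynomial, and $\mathcal{D}$ is dense in $PW_t(\mu)$ because it is dense in $L^2(\R)$ and the two norms are equivalent by Lemma \ref{uniformlb}. Starting from the identity $[K_0^{t,T_n},f]_{\mu_{T_n}}=f(0)$, split the integration at $|x|=R$. On $[-R,R]$ one has $\mu_{T_n}=\mu$ once $T_n>R$; combining the uniform pointwise bound on $K_0^{t,T_n}$ from its $PW_t$-norm bound with pointwise convergence $K_0^{t,T_n}\to K^*$, dominated convergence yields $\int_{[-R,R]}K_0^{t,T_n}\bar f\,d\mu_{T_n}\to\int_{[-R,R]}K^*\bar f\,d\mu$, which tends to $\int_{\R}K^*\bar f\,d\mu$ as $R\to\infty$ since $K^*\bar f\in L^1(\mu)$.

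The main obstacle is controlling the tail $\int_{|x|>R}|f|^2\,d\mu_{T_n}$ uniformly in $n$. Exploiting the structure of $\mu_{T_n}$,
\begin{equation*}
\int_{|x|>R}|f|^2\,d\mu_{T_n}=\int_{R<|x|\leq T_n}|f|^2\,d\mu+\sum_{k\neq 0}\int_{-T_n}^{T_n}|f(y+2kT_n)|^2\,d\mu(y).
\end{equation*}
The first term is bounded by $\int_{|x|>R}|f|^2\,d\mu$, which vanishes as $R\to\infty$ since $f\in L^2(\mu)$ by Lemma \ref{uniformlb}. For the second, Theorem \ref{PWcondition} implies $\mu([-T_n,T_n])\leq 2MT_n$ for some $M$, and the rapid decay $|f(x)|\leq C_N(1+|x|)^{-N}$ of $f\in\mathcal{D}$ bounds each summand by $2MT_n\cdot C_N^2((2|k|-1)T_n)^{-2N}$, so the full sum is $O(T_n^{1-2N})$, vanishing as $T_n\to\infty$ for any $N>1/2$. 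A naive PW-sampling estimate would fail here because translates $f(\cdot+2kT_n)$ all share the same $PW_t$-norm and a direct sum over $k$ would diverge; the rapid decay on the dense subclass $\mathcal{D}$ is precisely what beats the linear growth of $\mu([-T_n,T_n])$. Cauchy-Schwarz then closes the argument, giving $[K^*,f]_\mu=f(0)$ on $\mathcal{D}$, whence $K^*=K_0^t$ and $K_0^{t,T}(0)\to K_0^t(0)$ as required.
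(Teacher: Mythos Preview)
Your proof is correct and takes a genuinely different route from the paper's.

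The paper works on the Fourier side: it uses the convolution identity $f_t^T\ast\hat\mu_T=f_t\ast\hat\mu=1$ on $[-t,t]$, shows by a direct decay estimate that $f_t\ast(\hat\mu_T-\hat\mu)\to 0$ in $L^2[-t,t]$, and then invokes the uniform bound on $\|L_{\mu_T}^{-1}\|$ from Lemma~\ref{uniformlb} to conclude $f_t^T\to f_t$ in $L^2[-t,t]$, whence $K_0^{t,T}(0)\to K_0^t(0)$. Your argument stays entirely on the spatial side: uniform boundedness of $\{K_0^{t,T}\}$ in $PW_t$ (again via Lemma~\ref{uniformlb}), weak sequential compactness, and identification of every weak limit as $K_0^t$ by testing the reproducing identity against the dense class $\mathcal D$ of Schwartz-type functions in $PW_t$. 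The crucial point---that the periodized tails $\sum_{k\neq 0}\int_{-T_n}^{T_n}|f(\cdot+2kT_n)|^2\,d\mu$ vanish because rapid decay beats the linear growth $\mu([-T_n,T_n])\lesssim T_n$---is exactly what makes the density argument go through; this is the spatial analogue of the paper's estimate that $G(s)\lesssim T^{-2}$ on $[-T/2,T/2]$.

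What each approach buys: the paper's calculation actually yields the stronger conclusion $f_t^T\to f_t$ in $L^2[-t,t]$ (equivalently $K_0^{t,T}\to K_0^t$ in $PW_t$-norm), not just $K_0^{t,T}(0)\to K_0^t(0)$, and it ties directly into the Toeplitz-operator framework of Section~\ref{MPAlgorithm}. Your approach is softer---no Fourier analysis, only weak compactness, dominated convergence, and a density argument---and for that reason is arguably more transparent. Both proofs, however, lean essentially on Lemma~\ref{uniformlb}, so neither extends automatically beyond the PW-class; that is why Theorems~\ref{decay} and~\ref{polygrowth} require a separate variational argument.
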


\begin{proof}
As was discussed before, periodizations of $\mu$ are PW-measures for $T$ large enough. Therefore, we can apply corollary \ref{2Talgorithm} to $\mu_T$. By lemma \ref{SuffCondition}, it suffices to prove that for every $t$, $K_0^{t, T}(0) \to K_0^t(0)$ as $T \to \infty$. With \eqref{h11f}, it suffices to show that \begin{equation*}
    \int_{-t}^t f_t^T(s) ds \to \int_{-t}^t f_t(s) ds, \quad \text{as} \; T \to \infty.
\end{equation*}

For every fixed $t > 0$, recall that $f_t^T \ast \hat{\mu}_T (x) = f_t \ast \hat{\mu} (x) = 1$ for $x \in [-t, t]$, where $f_t^T$ and $f_t$ are the Fourier transforms of $K_0^{t, T}$ and $K_0^t$ respectively. Then for $x \in [-t, t]$, we have \begin{equation}
\begin{split}
    0 &= f_t^T \ast \hat{\mu}_T (x) - f_t \ast \hat{\mu} (x)\\
    &= \left( f_t^T - f_t \right) \ast \hat{\mu}_T(x) + f_t \ast (\hat{\mu}_T - \hat{\mu})(x).
\end{split}\label{eq002}
\end{equation}

Let $\phi\geq 0 $ be a $C^\infty$-function supported in $[-2t,2t]$. Then $\check\phi$ is a fast
decreasing function so that
$$c_n=\int_n^{n+1}|\check\phi(x)|dx$$
satisfy $c_n=o(1/n^3)$ as $|n|\to\infty$.
Recalling that $f_t$ is the 
Fourier transform of $K_0^t$,  we have 
\begin{equation}
\phi \cdot (f_t \ast (\hat{\mu}_T - \hat{\mu}))(x) = \frac{1}{\sqrt{2\pi}} \int_{-\infty}^\infty 
e^{-i \pi x s}\left[\int_{-\infty}^\infty \check\phi(s-y)K_0^t(y)  d(\mu_T - \mu)(y)\right]ds.\label{eq001}
\end{equation}

Let
$$G(s) = \int_{-\infty}^\infty \check\phi(s - y) K_0^t(y)  d(\mu_T - \mu)(y).$$
Since $h(y)=K_0^t(y)$ belongs to $PW_t$, its derivative
$h'(y)$ also belongs to $PW_t$ and therefore belongs to $L^2(\R)$. Let
$$d_n=\int_n^{n+1}|h'(y)|dy,\ n\in\mathbb Z.$$
By the Cauchy-Riemann inequality, $\{d_n\}\in l^2$. Also, since $\mu$ is PW-measure, $\mu([n,n+1])<C$ for all $n$ by theorem \ref{PWcondition},
and therefore $\mu_T([n,n+1])<2C$.

For $s\in[N,N+1]$,
\begin{equation*}
\begin{split}
    |G(s)| &\leqslant \int_{-\infty}^\infty \abs{\check\phi(s - y)}\abs{h(y)}  d\abs{\mu_T - \mu}(y)\\
    &= \sum_{n\in\mathbb Z}\int_{s+n}^{s+n+1} \abs{\check\phi(s - y)} \abs{h(y)}  d\abs{\mu_T - \mu}(y)\\
    &\leqslant 6C\sum_{n\in\mathbb Z} c_n\sup_{y\in[N+n,N+n+2]}\abs{h(y)}\\
    &\leqslant 6C\sum_{n\in\mathbb Z} c_n\left(\inf_{y\in[N+n,N+n+2]}|h(y)|+d_{N+n}+d_{N+n+2}\right)=p_N
    \end{split}
\end{equation*}
Since $h\in L^2(\R)$, $\inf_{y\in[n,n+2]}|h(y)|$ is an $l^2$-sequence. Since $\{d_n\}\in l^2$
and $c_n\in l^1$, $p_n$ is a convolution of of an $l^2$-sequence and an $l^1$-sequence, and therefore is itself an $l^2$-sequence.

Also notice that the fast decay of $\check\phi$, boundedness of $h$ as a PW-function and
the property that $\mu_T-\mu\equiv 0$ on $[-T,T]$ imply that for $s\in [-T/2,T/2]$,
\begin{equation*}
    |G(s)|\lesssim \frac 1{T^2}. 
\end{equation*}
All in all,
$$||G||_{L^2(\R)}=\int_{|x|>T/2}|G(x)|^2dx + \int_{|x|<T/2}|G(x)|^2dx
\leqslant \sum_{|n|>\frac T2-1}p_n^2 +O(1/T)\to 0\ \text{ as }T\to\infty.$$
Therefore the left-hand side in \eqref{eq001}, which equals to $\hat G$,
 tends to zero in $L^2(\R)$ (as a function of $x$).

Since $\phi$ was an arbitrary test function supported in $[-2t,2t]$, $f_t \ast (\hat{\mu}_t - \hat{\mu})$ converges to $0$ in $L^2([-t, t])$ as $T \to \infty$. By \eqref{eq002} it follows that $\left( f_t^T - f_t \right) \ast \hat{\mu}_T$ also converges to $0$ in $L^2([-t, t])$ as $T \to \infty$. Lemma \ref{uniformlb} implies that $L_{\mu_T}$ is invertible for large enough $T$, and $\|L_{\mu_T}^{-1}\|$ is uniformly bounded above by $c_t$. Therefore,  \begin{equation*}\begin{split}
    \norm{f_t^T - f_t}_{L^2([-t, t])} = \norm{\mathcal F\left(f_t^T - f_t\right)}_{PW_t}=
    \norm{L_{\mu_T}^{-1}\left(\mathcal F\left(\left( f_t^T - f_t \right) \ast \hat{\mu}_T\right)\right)}_{PW_t}\\ \leqslant c_t \cdot ||\left( f_t^T - f_t \right) \ast \hat{\mu}_T||_{L^2([-t, t])}\to 0,
\end{split}\end{equation*}
as $T\to\infty$.
Thus, for every fixed $t$ we have $f_t^T \to f_t$ in $L^2([-t, t])$, which implies \begin{equation*}
    \int_{-t}^t f_t^T(s) ds \to \int_{-t}^t f_t(s) ds, \quad \text{as} \; T \to \infty.
\end{equation*}
\end{proof}

Until recently, examples of solutions to inverse spectral problems for canonical systems were rare in the literature. Even though the number of available examples has increased with the development of new methods, 
most of the new examples pertain to periodic spectral measures. Non-periodic 
examples are still difficult to calculate explicitly, even for the simple spectral measures. 
The results of this section allow one to find such examples numerically after using the theorems of the 
previous section to find the Hamiltonians of the periodizations of the spectral measure.

\begin{example} Consider the PW-system with $d\mu(x) = (1 + \frac{\sin x}{x}) dx$. The density of this measure is bounded and bounded away from zero. Therefore $\mu$ is a PW-measure. By theorem \ref{PW}, we can use the Hamiltonians recovered from the periodizations $\mu_T$ to approximate the Hamiltonian corresponding to $\mu$. Here are the approximations with $T = \pi, 2\pi, 4\pi, 8\pi$. 

\begin{figure}[H]
\centering
\includegraphics[width=0.45\textwidth]{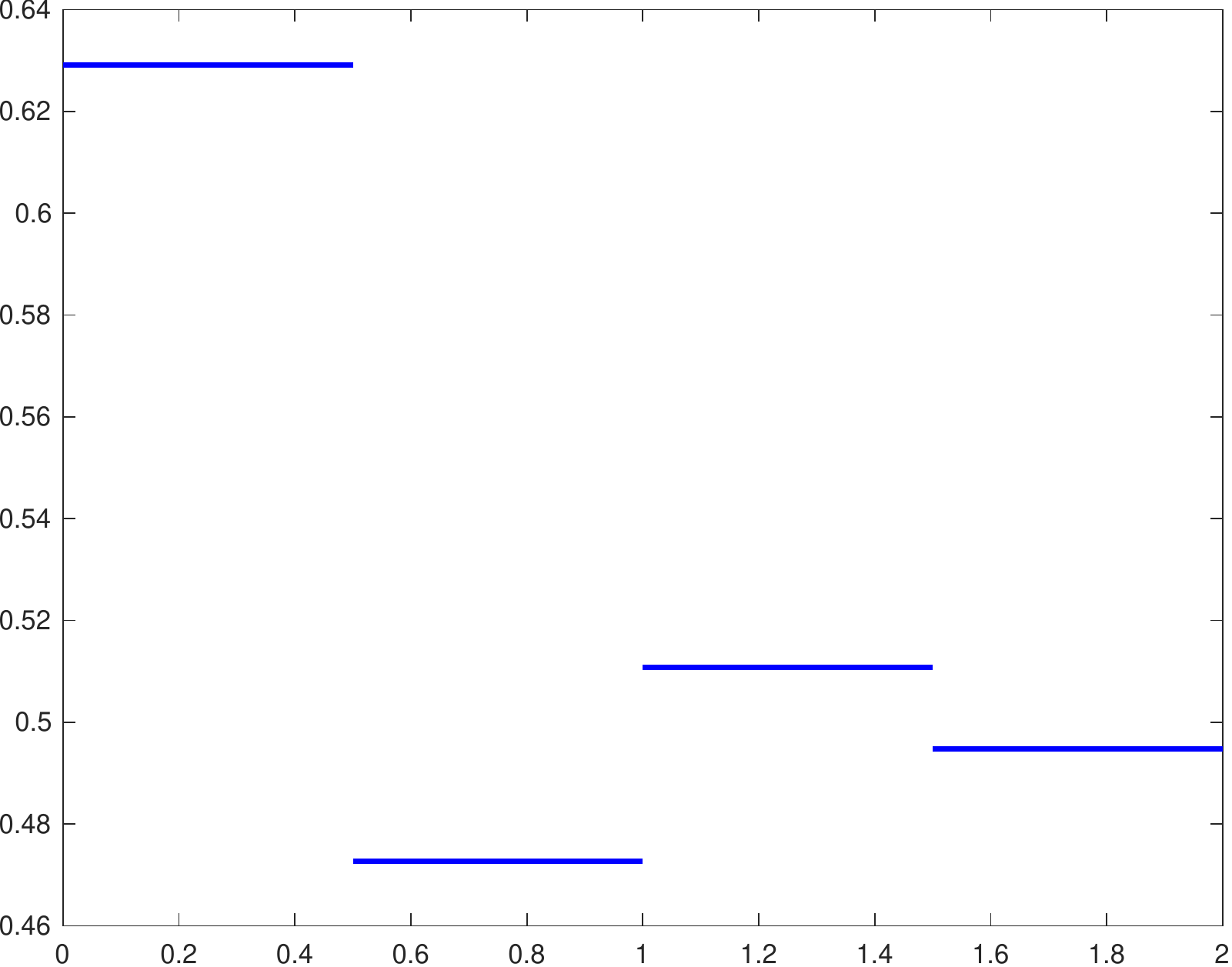}
\includegraphics[width=0.45\textwidth]{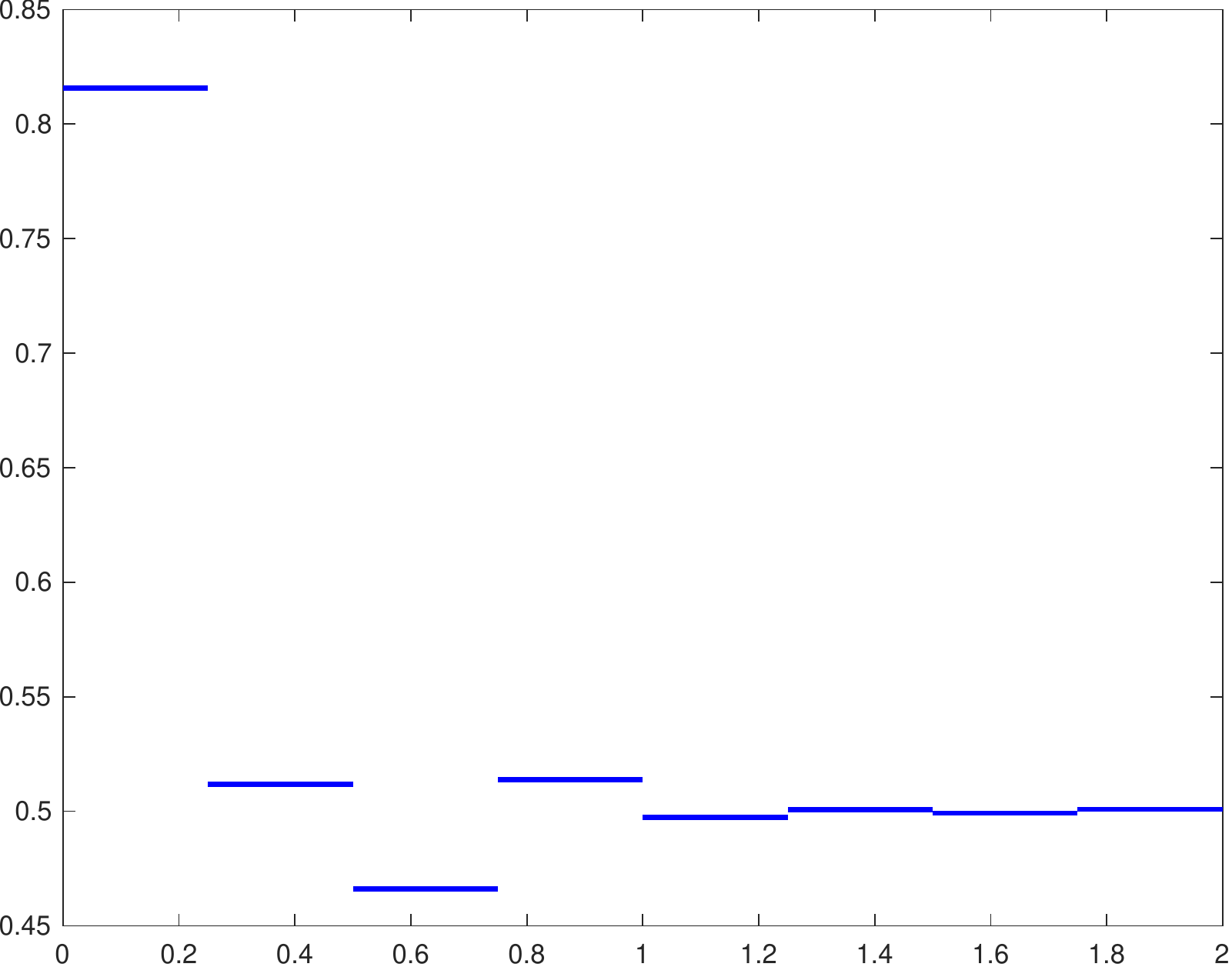}
\end{figure}

\begin{figure}[H]
\centering
\includegraphics[width=0.45\textwidth]{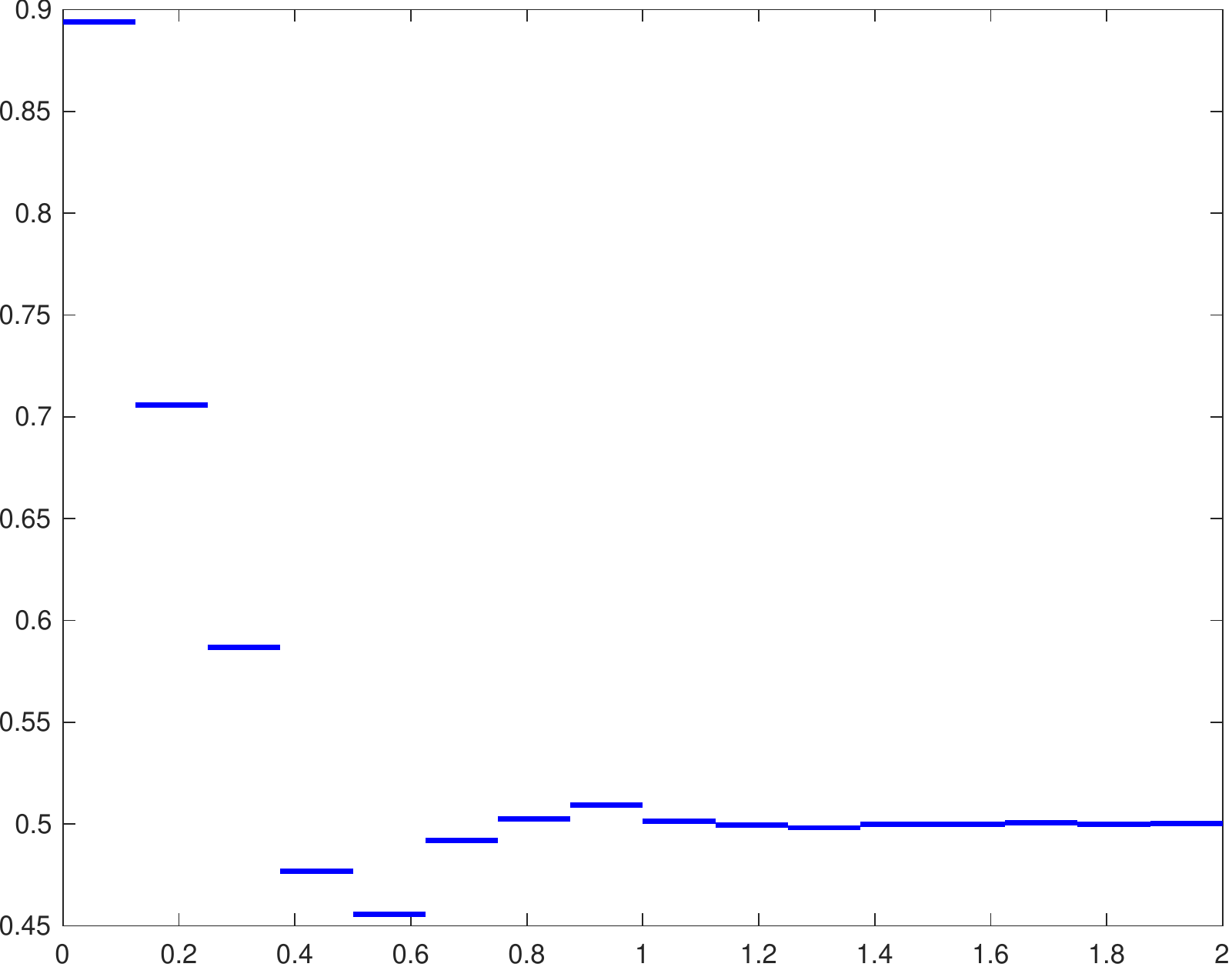}
\includegraphics[width=0.45\textwidth]{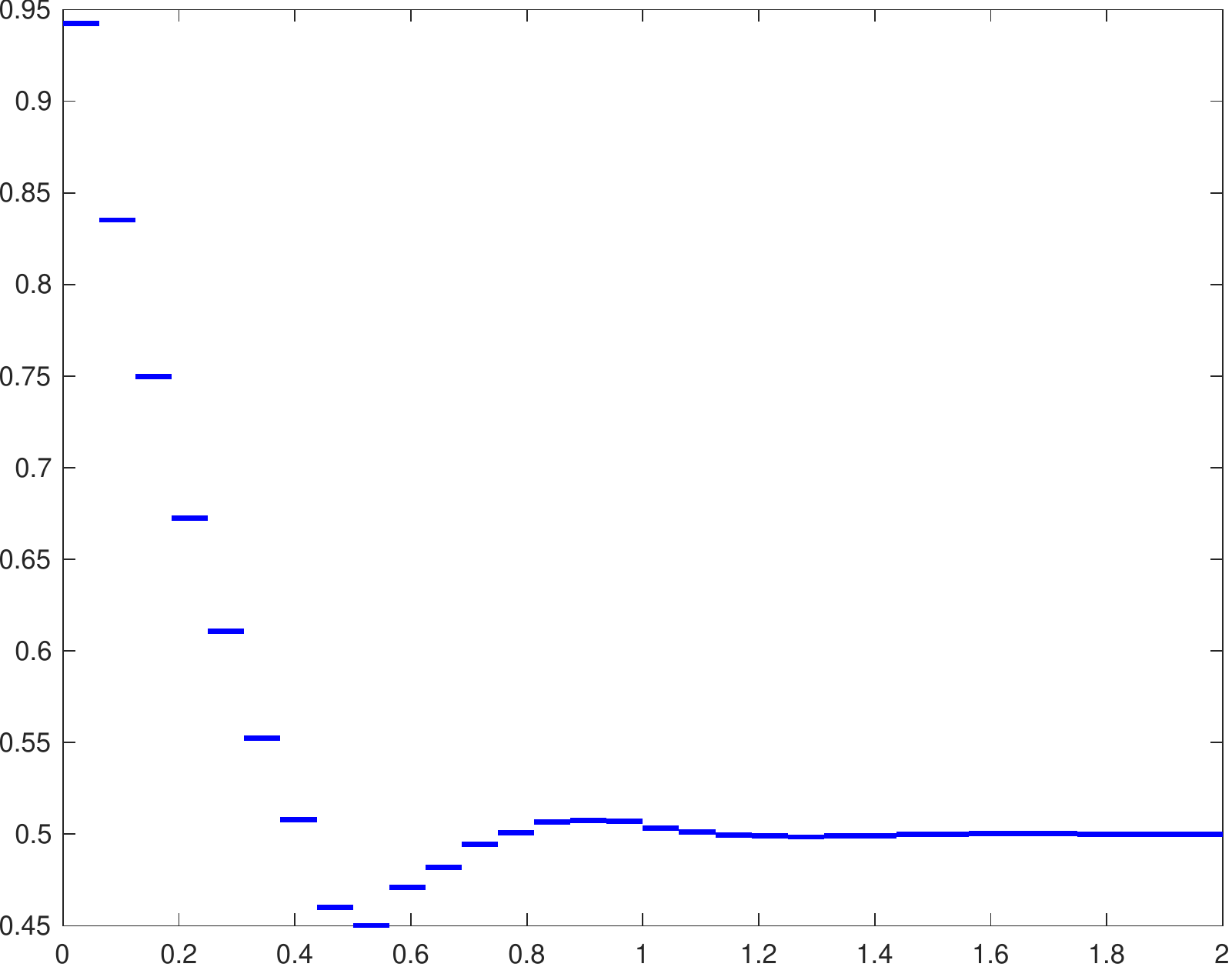}
\caption{Upper left: $h_{11}^{\pi}$. Upper right: $h_{11}^{2\pi}$. Lower left: $h_{11}^{4\pi}$. Lower right: $h_{11}^{8\pi}$.}
\end{figure}
\end{example}

\begin{example}
Consider the canonical system with $d\mu(x) = \left( 1 + \sin(x^2) \right) dx$. We use theorem \ref{PWcondition} to show that this is a PW-system.  For $x \in \mathbb{R}$, \begin{equation*}
   \mu((x, x + 1)) = \int_x^{x + 1} 1 + \sin(t^2) dt = \sqrt{\frac{\pi}{2}} \left( S\left(\sqrt{\frac{\pi}{2}} (x + 1)\right) - S\left(\sqrt{\frac{\pi}{2}} x\right) \right) + 1,
\end{equation*}
where $S$ is the Fresnel sine integral. The right-hand side is bounded, so $\sup_{x \in \mathbb{R}} \mu((x, x + 1)) < \infty$, which is the first condition in theorem \ref{PWcondition}. 

To check the second condition in theorem \ref{PWcondition}, it suffices to show that for every fixed $\epsilon$, there is a constant $c > 0$ such that $\mu(I) \geqslant c$ for all length $\epsilon$ intervals $I$. For every $x \in \mathbb{R}$ and $\epsilon > 0$, \begin{equation*}
    \mu((x, x + \epsilon)) = \int_x^{x + \epsilon} 1 + \sin(t^2) dt = \sqrt{\frac{\pi}{2}} \left( S\left(\sqrt{\frac{\pi}{2}} (x + \epsilon)\right) - S\left(\sqrt{\frac{\pi}{2}} x\right) \right) + \epsilon.
\end{equation*}
The right-hand side has a limit when $x \to \pm \infty$: \begin{equation*}
    \lim_{x \to \pm \infty}  \sqrt{\frac{\pi}{2}} \left( S\left(\sqrt{\frac{\pi}{2}} (x + \epsilon)\right) - S\left(\sqrt{\frac{\pi}{2}} x\right) \right) + \epsilon = \epsilon.
\end{equation*}
Therefore, we can find a constant $c > 0$ such that $\mu(I) \geqslant c$ for all length $\epsilon$ intervals $I$. By theorem \ref{PWcondition}, $\mu$ is a PW-measure, so the corresponding canonical system is a PW-system. By theorem \ref{PW}, we can use the Hamiltonians recovered from the periodizations $\mu_T$ to approximate the Hamiltonian corresponding to $\mu$. In particular, we look at $h_{11}^T$'s for $T = \pi, 2\pi, 4\pi, 8\pi$.

\begin{figure}[H]
\centering
\includegraphics[width=0.45\textwidth]{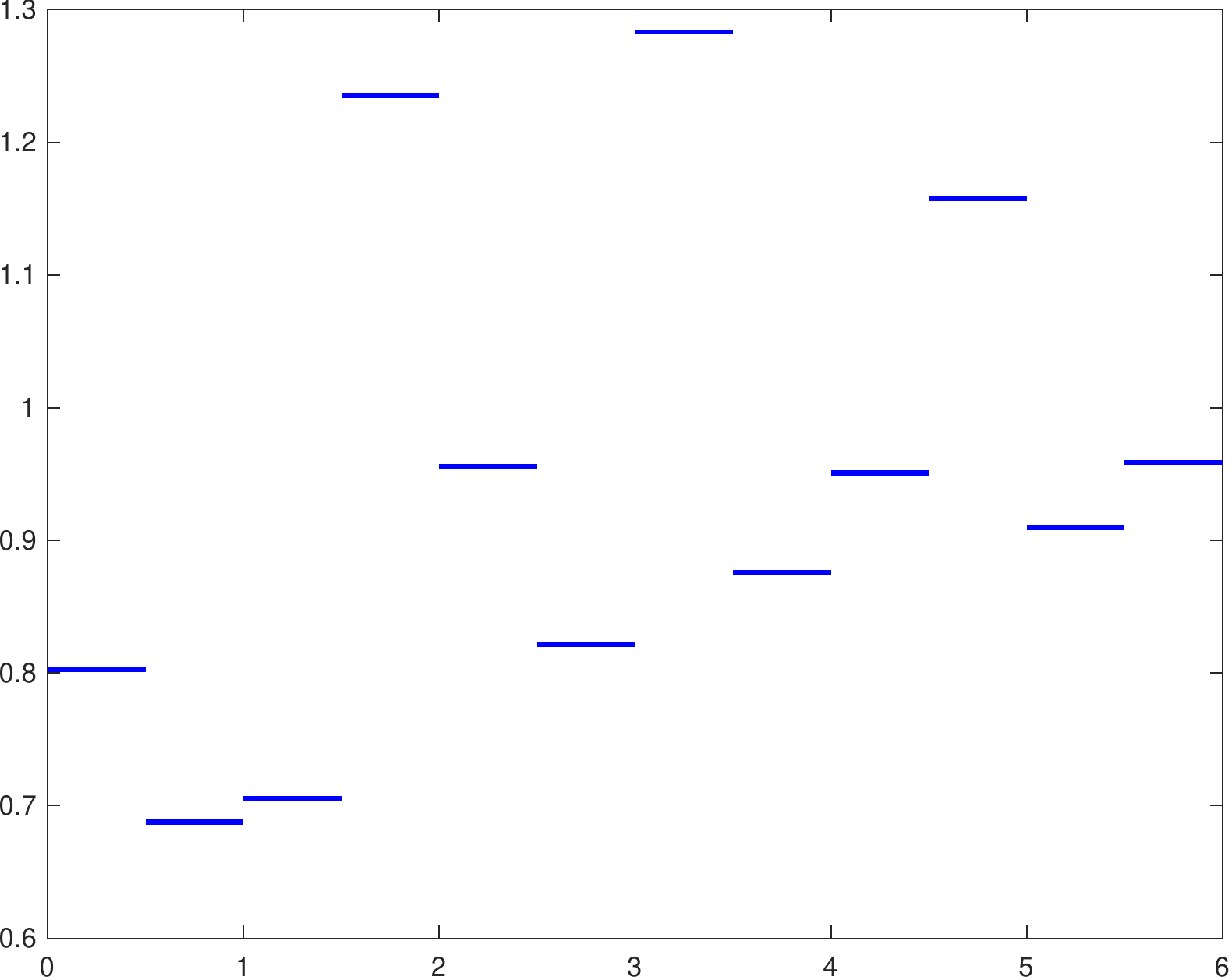}
\includegraphics[width=0.45\textwidth]{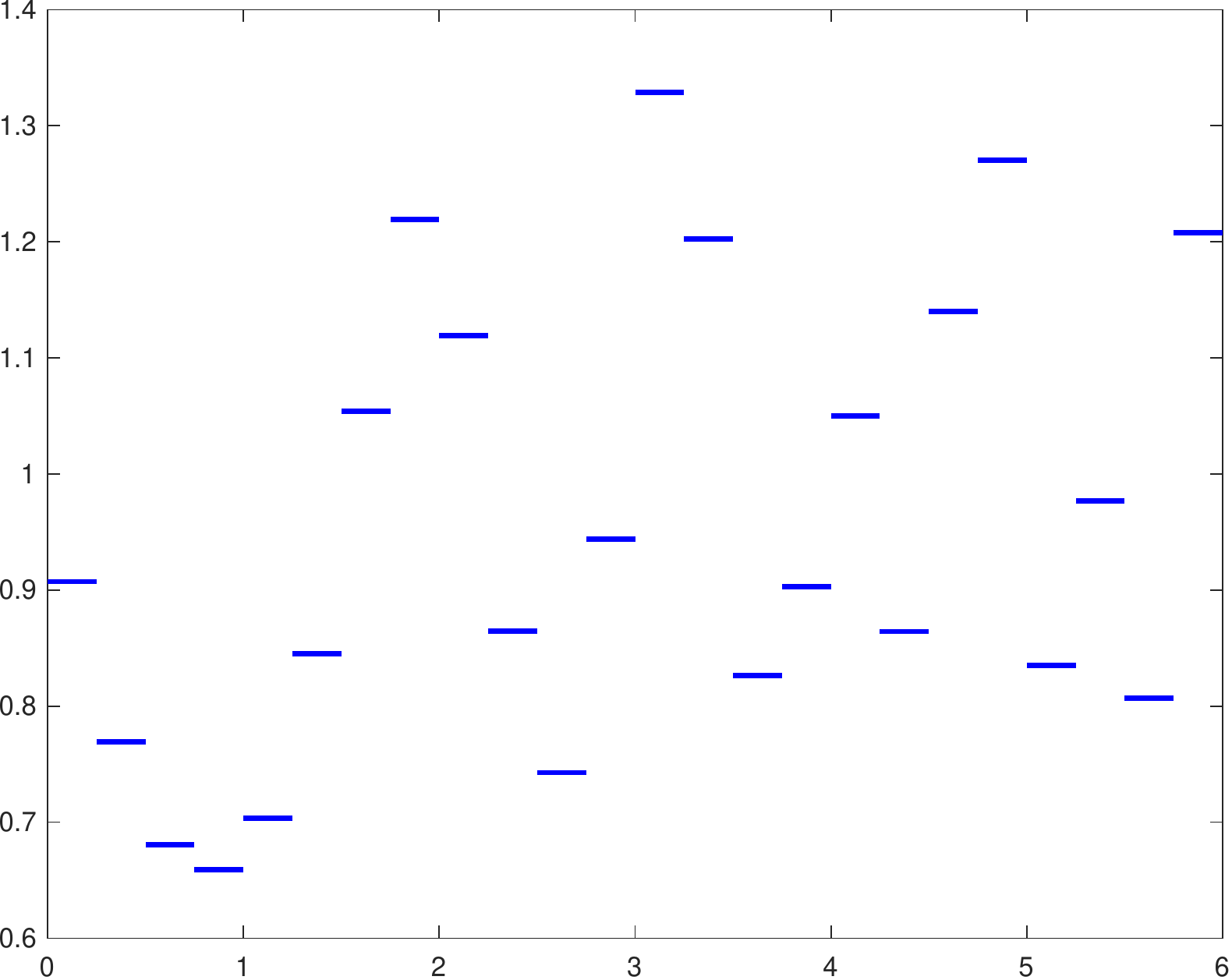}
\includegraphics[width=0.45\textwidth]{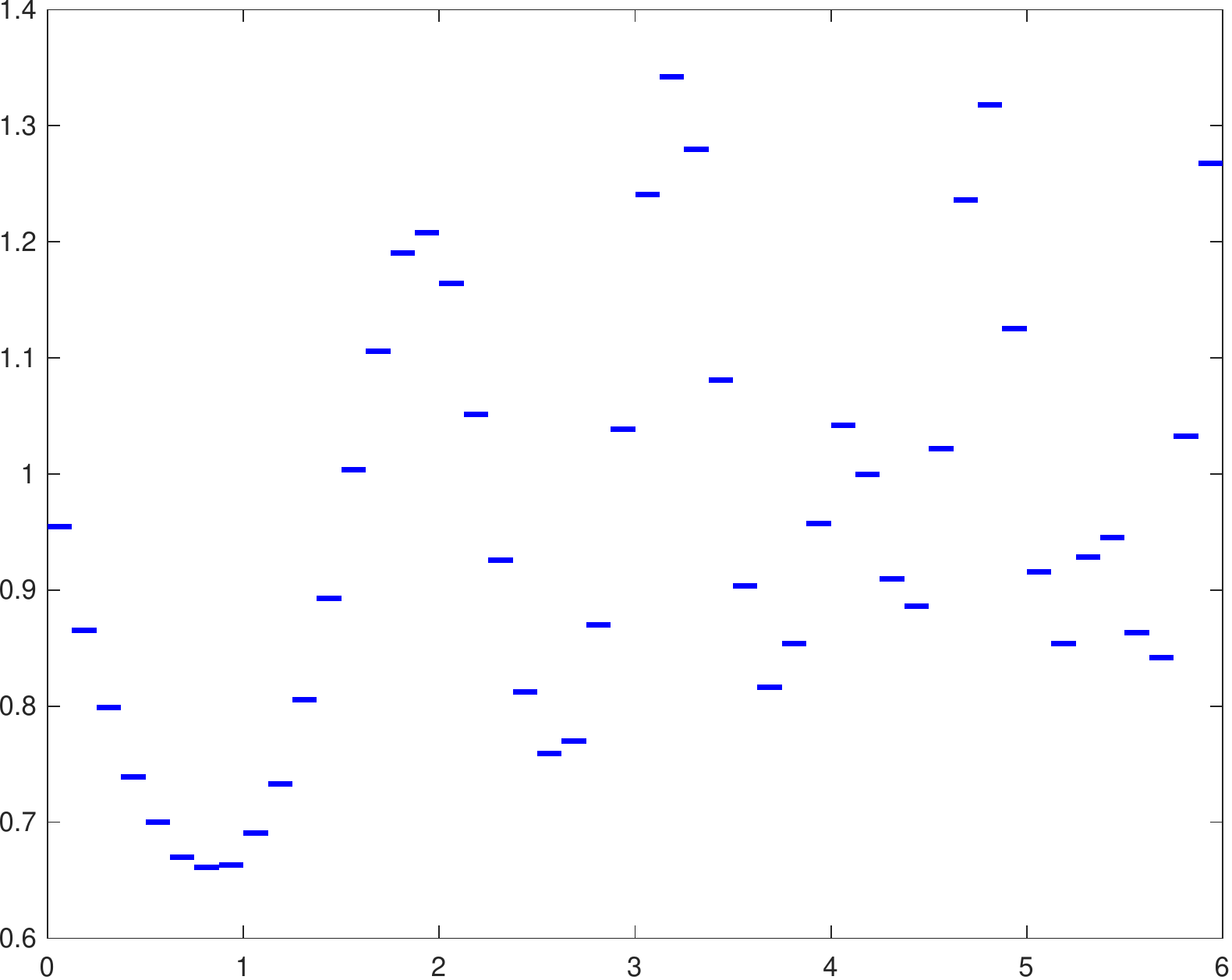}
\includegraphics[width=0.45\textwidth]{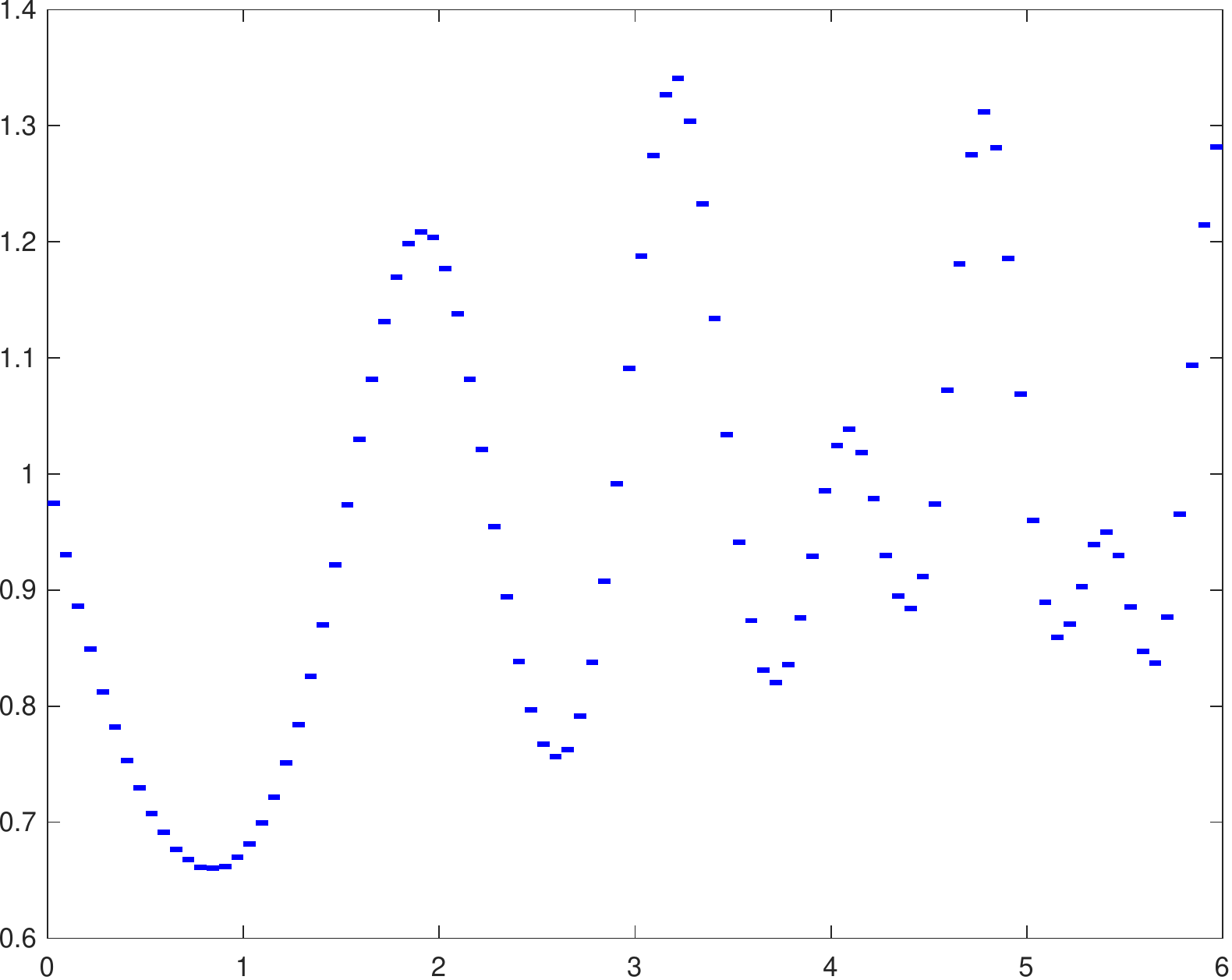}
\caption{Upper left: $h_{11}^{\pi}$. Upper right: $h_{11}^{2\pi}$. Lower left: $h_{11}^{4\pi}$. Lower right: $h_{11}^{8\pi}$.}
\end{figure}
\end{example}

\subsection{Beyond \texorpdfstring{$PW-$}{PW}measures}

The results of this section show that in some cases the periodization approach works beyond the $PW-$class. 

It follows from theorem \ref{PWcondition} that any measure decaying to $0$ at infinity is not a PW-measure. Nonetheless, we have the following statement.

\begin{theorem} \label{decay}
Let $\mu$ be a positive even Poisson-finite measure on $\R$. Suppose that there exist $C,c > 0$, such that $\mu$ has infinite support on $[-C, C]$, and $\mu(x, x + c)$ is decreasing on $\mathbb{R}_+$. Then
$$h^{nc}_{11} \overset{\ast}{\to} h_{11}\text{ as }n\to\infty,\ n\in\mathbb N.$$
\end{theorem}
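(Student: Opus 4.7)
The plan is to adapt the blueprint of the proof of Theorem \ref{PW} to the present non-PW setting, with the decreasing hypothesis supplying uniform-in-$T$ estimates that play the role of Lemma \ref{uniformlb}. By Lemma \ref{SuffCondition} and its corollary, it suffices to establish
$$K_0^{t, nc}(0) \to K_0^t(0)\quad \text{as } n \to \infty$$
for almost every $t > 0$. For $nc > C$ the periodization $\mu_{nc}$ is a PW-measure (Corollary \ref{PWper}), so $K_0^{t, nc} \in PW_t$ and the convolution identity $f_t^{nc}\ast\hat\mu_{nc} = 1$ on $[-t,t]$ holds as in the PW case.

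The decreasing hypothesis enters through two uniform estimates. First, evenness of $\mu$ combined with monotonicity of $\mu(x,x+c)$ on $\R_+$ gives $\sup_{y\in\R}\mu(y,y+c) \leq 2\mu(0,c) =: 2M$, and the same bound holds for every $\mu_{nc}$. Via Plancherel-Polya, this yields the one-sided uniform bound $\|F\|^2_{L^2(\mu_{nc})} \leq C_{t,c,M}\|F\|^2_{L^2(\R)}$ for all $F \in PW_t$. Second, for any \emph{fixed} $F \in PW_t$, the identity $\mu_{nc} = \mu$ on $[-nc, nc]$, together with $F \in L^2(\R)$ and the bounded-density property, forces both $\int_{|x|>nc}|F|^2\,d\mu$ and $\int_{|x|>nc}|F|^2\,d\mu_{nc}$ to vanish, so $\|F\|^2_{L^2(\mu_{nc})} \to \|F\|^2_{L^2(\mu)}$ as $n \to \infty$.

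I would then combine these with the extremal characterization
$$\frac{1}{K_0^{t,nc}(0)} = \inf\left\{\|F\|^2_{L^2(\mu_{nc})} : F \in PW_t,\ F(0) = 1\right\}$$
to deduce convergence from both sides. For $\liminf_n K_0^{t,nc}(0) \geq K_0^t(0)$, plug any fixed $F \in PW_t$ with $F(0) = 1$ into the infimum and invoke the pointwise convergence above, then identify the resulting $PW_t$-infimum over $L^2(\mu)$ with $1/K_0^t(0)$ by exhibiting $PW_t$-approximations of the extremal in $B_t$. For $\limsup_n K_0^{t,nc}(0) \leq K_0^t(0)$, extract a subsequence of the normalized extremals $F_{n_k} := K_0^{t, n_k c}/K_0^{t, n_k c}(0)$ converging locally uniformly to an entire limit $F^*$ of type $\leq t$ with $F^*(0) = 1$; Fatou applied on each finite interval $[-R, R]$ (on which $\mu_{n_k c} = \mu$ eventually) gives $\|F^*\|^2_{L^2(\mu)} \leq \liminf_k 1/K_0^{t, n_k c}(0)$, and $F^* \in B_t$ then forces $\|F^*\|^2_{L^2(\mu)} \geq 1/K_0^t(0)$.

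The main technical obstacle is the Montel compactness step: because the lower $PW_t$-sampling constants of $\mu_{nc}$ can degenerate as $n \to \infty$, the uniform $L^2(\mu_{nc})$-bound on $F_{n_k}$ does not translate directly to a uniform $L^2(\R)$-bound. The resolution is to exploit the infinite support of $\mu$ on $[-C, C]$, a region on which $\mu_{nc} = \mu$ whenever $nc > C$: the accumulation points of $\mathrm{supp}\,\mu$ in this compact interval produce a local sampling inequality of the form $|F(0)|^2 \leq C_t \int_{[-C, C]}|F|^2\,d\mu$ valid for every $F \in PW_t$, which yields a uniform lower bound $K_0^{t, nc}(0) \geq c_1(t) > 0$ and, combined with the uniform upper bound from the second paragraph and the reproducing-kernel estimate in $PW_t$, provides the locally uniform bounds on $F_{n_k}$ needed for Montel.
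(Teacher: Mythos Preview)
Your reduction to $K_0^{t,nc}(0)\to K_0^t(0)$ via Lemma~\ref{SuffCondition} and your use of the extremal characterization are exactly what the paper does, and your $\liminf$ direction (plug a fixed near-extremal $F$ for $\mu$ and use $\|F\|_{L^2(\mu_{nc})}\to\|F\|_{L^2(\mu)}$) matches the paper's argument for that half.

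The divergence is in the $\limsup$ direction, and here your route is both more complicated than necessary and has a genuine gap. The paper exploits the monotonicity hypothesis much more directly than you do: from ``$\mu(x,x+c)$ decreasing on $\R_+$'' it deduces that translation by multiples of $c$ to the right decreases $\mu$-mass on $\R_+$, which yields $\mu_{nc}\geq\mu$ as measures on $\{|x|>nc\}$ and hence $\|f\|_{L^2(\mu_{nc})}\geq\|f\|_{L^2(\mu)}$ for \emph{every} $f$. This gives $K_0^{t,nc}(0)\leq K_0^t(0)$ for all $n$ with $nc>C$, with no compactness, no Fatou, nothing. You use monotonicity only to get the uniform bound $\sup_y\mu_{nc}(y,y+c)\leq 2M$, which is far weaker; you are discarding the one-sided comparison $\mu_{nc}\geq\mu$ that does all the work.

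Your Montel/Fatou substitute has a real hole. Even granting the normal-family bounds, the locally uniform limit $F^*$ is only known to be entire of exponential type $\leq t$ with $\|F^*\|_{L^2(\mu)}<\infty$. To feed $F^*$ back into the extremal problem and conclude $\|F^*\|^2_{L^2(\mu)}\geq 1/K_0^t(0)$ you need $F^*\in PW_t$, i.e.\ $F^*\in L^2(\R)$; but for a decaying $\mu$ the inclusion $L^2(\R)\cap\{\text{type}\leq t\}\subset L^2(\mu)\cap\{\text{type}\leq t\}$ is strict, and nothing in your argument prevents $F^*$ from lying in the gap. (The degeneration of the lower sampling constants, which you yourself flag, is exactly what allows $\|F_{n_k}\|_{L^2(\R)}$ to blow up while $\|F_{n_k}\|_{L^2(\mu_{n_kc})}$ stays bounded.) Separately, the ``local sampling inequality'' $|F(0)|^2\leq C_t\int_{[-C,C]}|F|^2\,d\mu$ you invoke is not a standard fact and need not hold: infinite support on $[-C,C]$ ensures the seminorm on the right is a norm on $PW_t$, but not that it dominates point evaluation at $0$.
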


\begin{proof}
Note that when $nc > C$, the periodization $\mu_{nc}$ is a PW-measure. Then we can use the algorithm described in Section \ref{PeriodicPW} to recover $h_{11}^{nc}$.

By lemma \ref{SuffCondition}, it suffices to show that $\lim_{n \to \infty} K_0^{t, nc}(0) = K_0^t(0)$ for every $t$. We will use the following equivalent definition of $K_0^t(0)$ (see for instance \cite{MP}): \begin{equation*}
    K_0^t(0) = \sup\{|f(0)|\ |\ \norm{f}_{L^2(\mu)} \leqslant 1, f \in PW_t\},
\end{equation*}
and 
\begin{equation*}
    K_0^{t, nc}(0) = \sup\{|f(0)|\ |\ \norm{f}_{L^2(\mu_{nc})} \leqslant 1, f \in PW_t\}.
\end{equation*}

Suppose that $$\lim_{n \to \infty} K_0^{t, nc}(0) \neq K_0^t(0).$$ 
Then there exist $\epsilon > 0$, and $\{ n_k \}_{k \in \mathbb{N}}$ such that $$\lim_{k \to \infty} n_k c = \infty$$ and \begin{equation}\label{decayfail}
    \abs{K_0^{t, n_k c}(0) - K_0^t(0)} > \epsilon, \quad \text{for all } k.
\end{equation}

When $nc > C$, for every $f \in PW_t$, we have \begin{equation*}
\begin{split}
    \norm{f}_{L^2(\mu_{nc})}^2 &= \int_{-\infty}^\infty | f(x)|^2 d\mu_{nc}(x) = \int_{\abs{x} \geqslant nc} |f(x)|^2 d\mu_{nc}(x) + \int_{\abs{x} < nc} |f(x)|^2 d\mu_{nc}(x)\\
    &= \int_{\abs{x} \geqslant nc} |f(x)|^2 d\mu_{nc}(x) + \int_{\abs{x} < nc} |f(x)|^2 d\mu(x).
\end{split}
\end{equation*}

Notice that it follows from our assumption
that for any $S\subseteq \R_{\pm}$, $\mu(S\pm nc) \leqslant \mu(S)$ for every $n\in\mathbb N$. In particular, if $S \subseteq [nc, \infty)$,  or $S \subseteq (-\infty, -nc]$, then $\mu(S) \leqslant \mu_{nc}(S)$. Therefore, 
$$\int_{|x|\geqslant nc}|f(x)|^2d\mu_{nc}(x)\geqslant
\int_{|x|\geqslant nc}|f(x)|^2d\mu(x)$$
and
\begin{equation*}
     \norm{f}_{L^2(\mu_{nc})}^2 \geqslant \int_{\abs{x} \geqslant nc} \abs{f(x)}^2 d\mu(x) + \int_{\abs{x} < nc} \abs{f(x)}^2 d\mu(x) = \norm{f}_{L^2(\mu)},
\end{equation*}
which implies that $K_0^{t, nc}(0) \leqslant K_0^t(0)$.
It also follows that 
$$\int_{\abs{x} \geqslant mc} \abs{f(x)}^2 d\mu_{nc}(x)\leqslant \int_{\abs{x} \geqslant mc} \abs{f(x)}^2 d\mu_c(x)$$
for any $m,n\in\mathbb N$.

Since $K_0^{t, nc}(0) \leqslant K_0^t(0)$, we can rewrite \eqref{decayfail} as \begin{equation*}
    K_0^t(0) - K_0^{t, n_k c}(0) > \epsilon, \quad \text{for all } n.
\end{equation*}

Let $f \in PW_t$ such that $f(0) = K_0^t(0) - \epsilon/2$, and $\norm{f}_{L^2(\mu)} = 1$. Define 
$$f^{n_k c}(x) = f(x)/\norm{f}_{L^2(\mu_{n_k c})}$$
for all $k$. Since $f^{n_k c} \in PW_t$, and $\norm{f^{n_k c}}_{L^2(\mu_{n_k c})}^2 = 1$, we have $K_0^{t, n_k c}(0) \geqslant f^{n_k c}(0)$ for all $n$.  Also, \begin{equation*}
    \norm{f}_{L^2(\mu_{n_k c})}^2 - \norm{f}_{L^2(\mu)}^2 = \int_{\abs{x} > n_k c} \abs{f(x)}^2 d\mu_{n_k c}(x) - \int_{\abs{x} > n_k c} \abs{f(x)}^2 d\mu(x) \leqslant 2 \int_{\abs{x} > n_k c} \abs{f(x)}^2 d\mu_c(x).
\end{equation*}
Since $f\in PW_t$, $f'\in L^2(\R)$. It follows that as $k \to \infty$, 
$$\int_{\abs{x} > n_k c} \abs{f(x)}^2 d\mu_c(x) \to 0$$
via the standard argument used in the proof of theorem \ref{PW}.
This implies 
$$\norm{f}_{L^2(\mu_{n_k c})} \to \norm{f}_{L^2(\mu)} = 1.$$
Therefore, we have 
$$\lim_{k \to \infty} f^{n_k c}(0) = \lim_{k \to \infty} f(0)/\norm{f}_{L^2(\mu_{n_k c})} = f(0) = K_0^t(0) - \epsilon/2.$$
In particular, for $k$ big enough, we have \begin{equation*}
 \lim_{k \to \infty}   K_0^{t, n_k c}(0) \geqslant \lim_{k \to \infty} f^{n_kc}(0) \geqslant K_0^t(0) - \frac{3}{4} \epsilon.
\end{equation*}
This contradicts \begin{equation*}
    K_0^t(0) - K_0^{t, n_k c}(0) > \epsilon, \quad \text{for all } k.
\end{equation*}
Therefore, $$\lim_{n \to \infty} K_0^{t, nc}(0) = K_0^t(0).$$
\end{proof}

The following particular case can be proved in a similar way or deduced from the last statement.

\begin{corollary}\label{CorDecay}
Let $\mu$ be a positive even absolutely continuous locally-finite measure, whose density 
decreases on $\R_+$. Then
$$h_{11}^T\overset{\ast}{\to} h_{11}$$
as $T\to\infty$.
\end{corollary}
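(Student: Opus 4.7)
The plan is to adapt the proof of Theorem~\ref{decay}, with continuous $T\to\infty$ in place of the discrete sequence $nc$. The key observation is that for an even density $\rho$ that decreases on $\R_+$, the measure comparisons used in Theorem~\ref{decay} hold for \emph{every} $T>0$, not only for multiples of a fixed $c$. Since $\mu$ is nontrivial and $\rho$ is decreasing, $\rho>0$ on some interval $(0,\delta)$, so $\mu$ has locally infinite support, $\mu_T$ is a $PW$-measure for $T>\delta$, and $h_{11}^T$ is therefore defined via Corollary~\ref{2Talgorithm}. By Lemma~\ref{SuffCondition}, it suffices to prove $K_0^{t,T}(0)\to K_0^t(0)$ as $T\to\infty$ for every $t>0$.

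First I would establish the monotonicity $\mu_T \geqslant \mu$ on $\{|x|>T\}$. On the fundamental period $[T,3T]$ the density of $\mu_T$ equals $\rho(\,\cdot\,-2T)$, hence $\rho(2T-\,\cdot\,)$ by evenness; for $x\in[T,2T]$ one has $2T-x\leqslant x$ and for $x\in[2T,3T]$ one has $x-2T\leqslant x$, so in both cases monotonicity of $\rho$ on $\R_+$ yields a pointwise density inequality, and the periodicity and symmetry of $\mu_T$ extend it to all $|x|>T$. Consequently $\|f\|_{L^2(\mu_T)}\geqslant \|f\|_{L^2(\mu)}$ for every $f\in PW_t$, and the variational description of the reproducing kernel at $0$ gives $K_0^{t,T}(0)\leqslant K_0^t(0)$ for all large $T$, mirroring the corresponding step in Theorem~\ref{decay}.

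For the reverse inequality I would, given $\epsilon>0$, choose $f\in PW_t$ with $\|f\|_{L^2(\mu)}=1$ and $f(0)>K_0^t(0)-\epsilon/2$, and show $\|f\|_{L^2(\mu_T)}\to 1$. The difference $\|f\|_{L^2(\mu_T)}^2-\|f\|_{L^2(\mu)}^2$ splits into two tails over $\{|x|>T\}$: the $\mu$-tail vanishes trivially since $|f|^2\in L^1(\mu)$, while the $\mu_T$-tail is handled by the same fast-decay argument used in the proof of Theorem~\ref{PW}, based on $f,f'\in L^2(\R)$ together with the uniform bound $\mu_T([x,x+1])\leqslant 2\mu([0,1])$, which follows immediately from the decreasing density. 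Normalizing $f^T=f/\|f\|_{L^2(\mu_T)}$ then gives $K_0^{t,T}(0)\geqslant f^T(0)\to f(0)>K_0^t(0)-\epsilon/2$, so $K_0^{t,T}(0)\to K_0^t(0)$ and Lemma~\ref{SuffCondition} finishes the proof. The main obstacle, as in Theorem~\ref{decay}, is controlling the $\mu_T$-tail uniformly in $T$; once the uniform mass bound on unit intervals is secured from the decreasing density hypothesis, this step proceeds exactly as in the earlier arguments.
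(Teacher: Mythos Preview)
Your proposal is correct and follows exactly the route the paper indicates (``proved in a similar way'' as Theorem~\ref{decay}): you run the variational argument for $K_0^{t,T}(0)$ with continuous $T$, using that a decreasing even density makes the comparison $\mu_T\geqslant\mu$ on $\{|x|>T\}$ hold for every $T$ rather than only along multiples of a fixed $c$. The only cosmetic point is that the constant in your uniform bound $\mu_T([x,x+1])\leqslant 2\mu([0,1])$ may need to be $4$ when the interval straddles a period boundary, but uniform boundedness is all that is used.
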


\begin{example}
Let us revisit example \ref{oneplusdelta}, where $h_{11}(t)$ was calculated explicitly for the spectral measure 
$$\mu = \sqrt{2\pi} \delta_0 + \frac{1}{\sqrt{2\pi}}m.$$
This measure satisfies the assumptions of both theorems \ref{PW} and \ref{decay}. We periodize the measure with $T = \pi, 2\pi, 4\pi, 8\pi$, and compare the $h_{11}^T$'s with $h_{11}(t) = \frac{\sqrt{2\pi}}{(1 + 2t)^2}$. In the following pictures, the black step functions are the $h_{11}^T$'s obtained from the periodizations, and the orange curve in every figure is the actual $h_{11}(t)$.

\begin{figure}[H]
\centering
\includegraphics[width=0.45\textwidth]{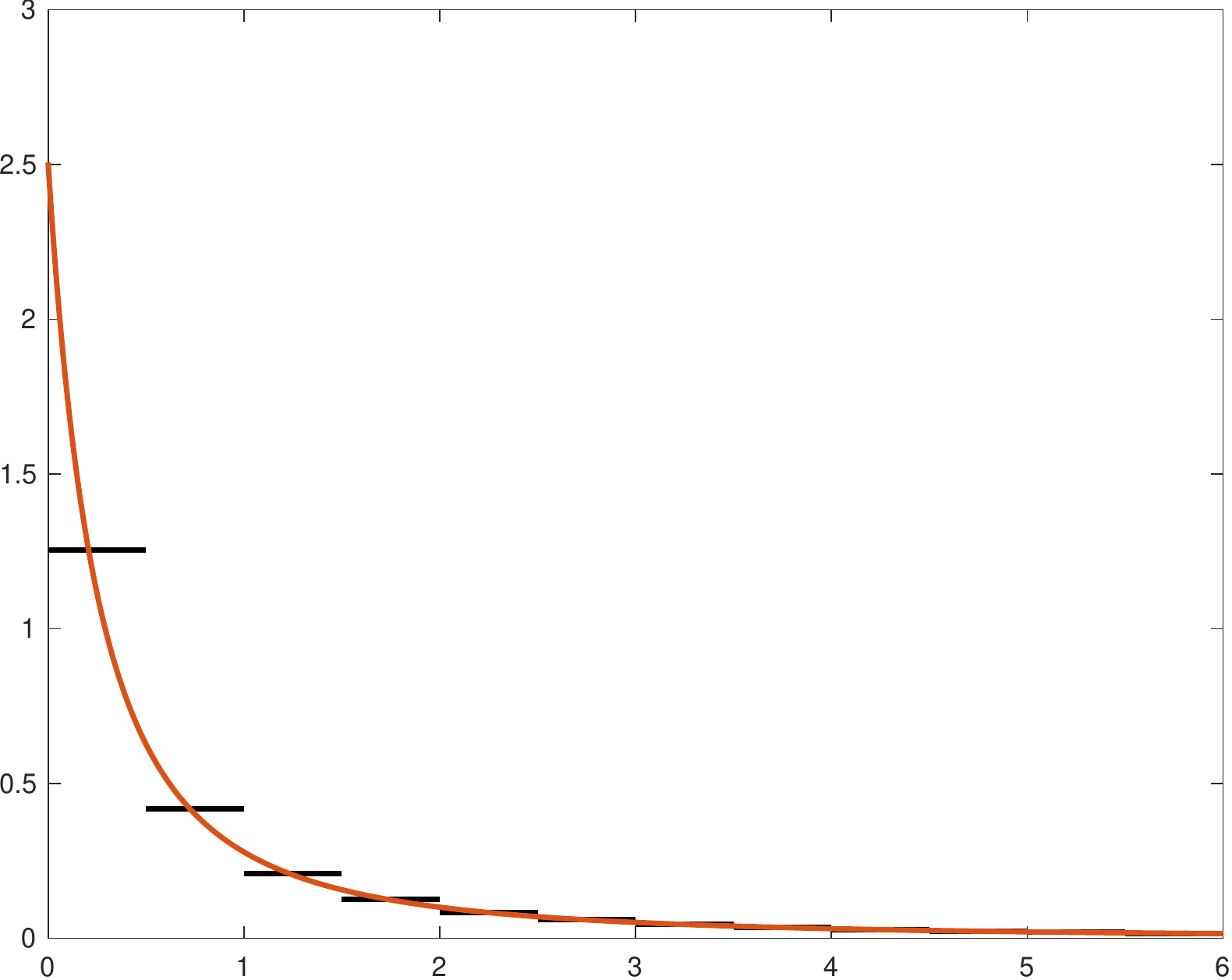}
\includegraphics[width=0.45\textwidth]{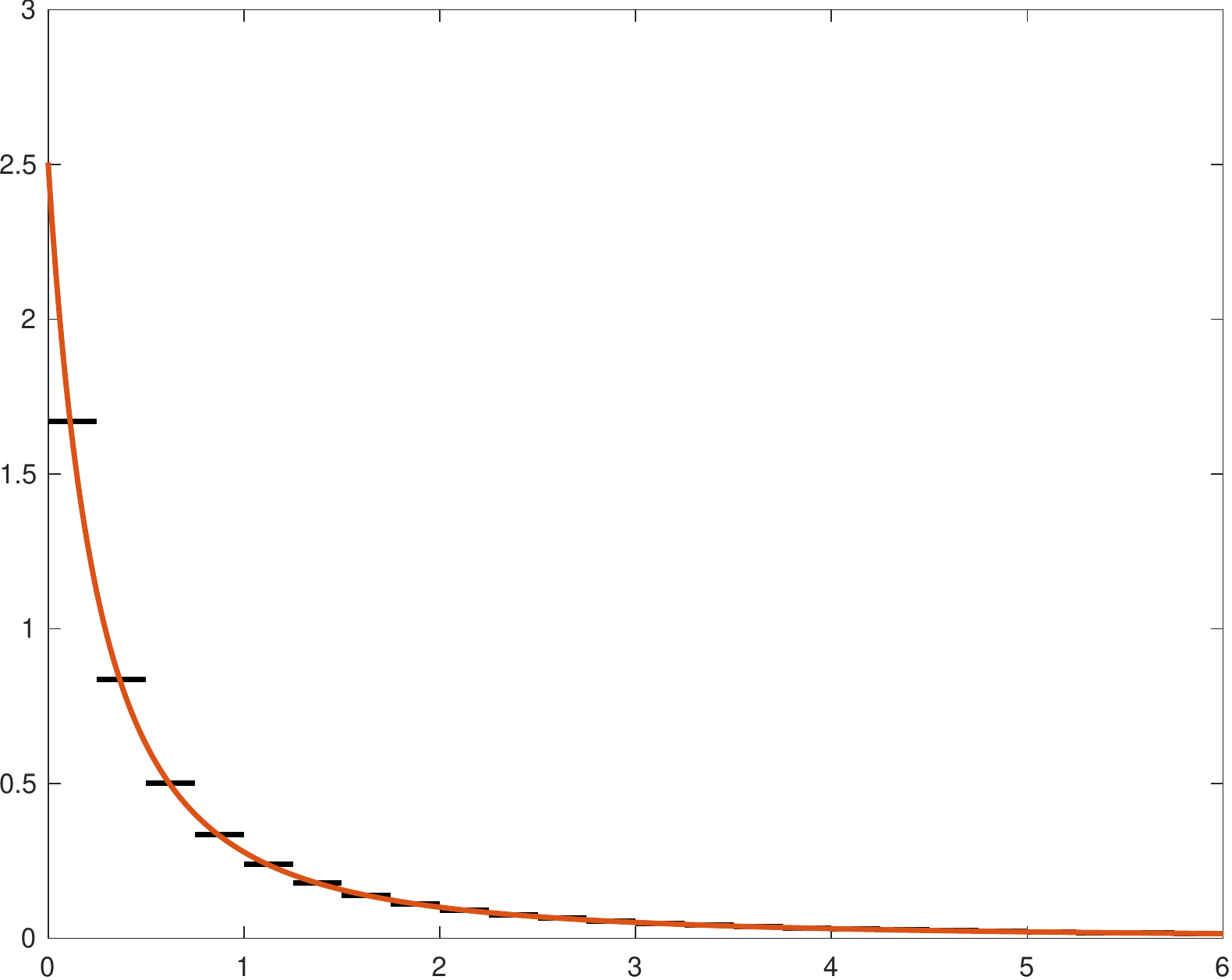}
\end{figure}

\begin{figure}[H]
\centering
\includegraphics[width=0.45\textwidth]{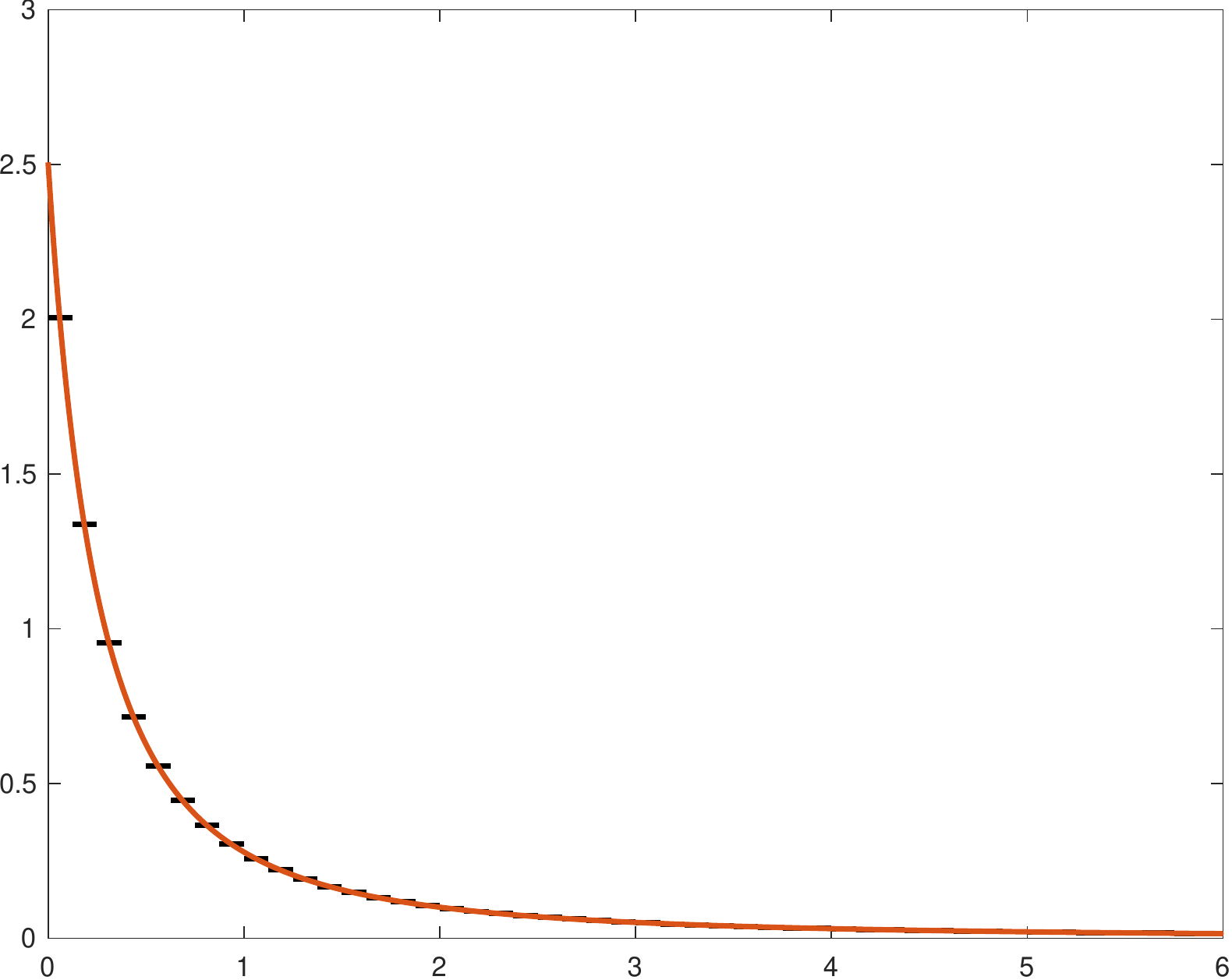}
\includegraphics[width=0.45\textwidth]{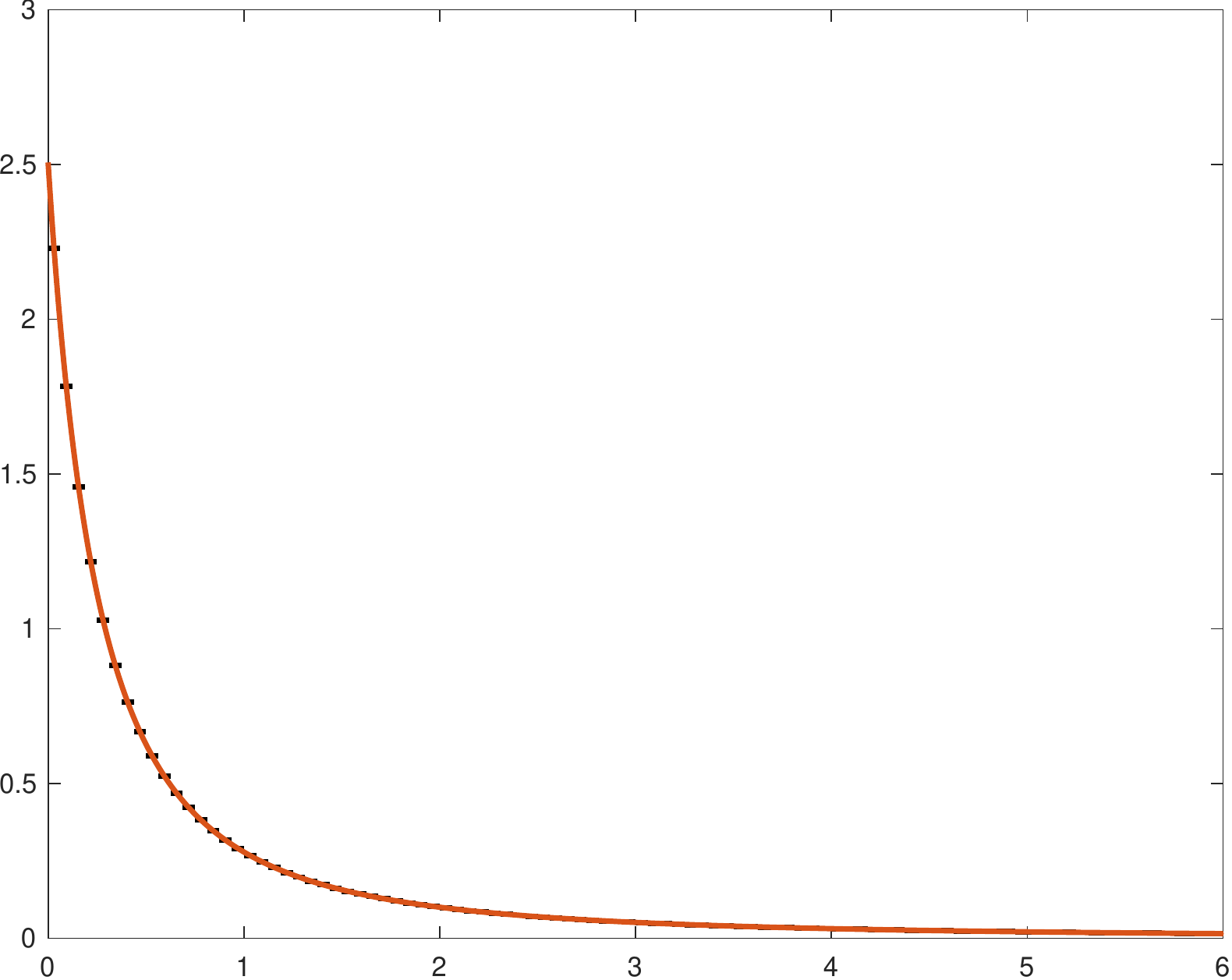}
\caption{Upper left: $h_{11}^{\pi}$. Upper right: $h_{11}^{2\pi}$. Lower left: $h_{11}^{4\pi}$. Lower right: $h_{11}^{8\pi}$.}
\end{figure}

Let $\mu_T$ be the $2T-$periodization of $\mu$, i.e., $\mu_T = \sqrt{2\pi}\delta_0 + \frac{1}{\sqrt{2\pi}}m$ on $[-T, T]$, and $\mu_T$ is a $2T-$periodic measure. Then the measure corresponding to $\mu_T$ on the unit circle $\mathbb{T}$ is $\mu_T^\mathbb{T} = \frac{\pi + T}{\sqrt{2\pi}} \left( \frac{\pi}{\pi + T} \delta_0 + \frac{T}{\pi + T} \frac{m}{2\pi} \right)$, where $\delta_0$ here stands for the unit point mass at $\theta = 0$ on $\mathbb{T}$. We know the orthonormal polynomials corresponding to this measure, for instance from \cite{Simon}. Therefore by theorem \ref{onpoly}, the $n$-th step of $h_{11}^T$ corresponding to $\mu_T$ takes value \begin{equation*}
    \frac{\sqrt{2\pi}T^2}{(n\pi + T)(n\pi + T + \pi)}.
\end{equation*}

For a given $T$, in the places of $n$, plug in $\frac{2Tt}{\pi} + 1$. This produces the function whose graph passes through all the right endpoints of the steps for $h_{11}^T$. Denoting this function by $\psi_T(t)$, we get 
\begin{equation*}
    \psi_T(t) = \frac{\sqrt{2\pi}T^2}{4t^2T^2 + 4tT^2 + 6\pi t T + T^2 + 3\pi T + 2\pi^2}.
\end{equation*}
After taking the limit as $T \to \infty$, we obtain the expected answer: 
$$\lim_{T \to \infty} \psi_T(t) = \frac{\sqrt{2\pi}}{4t^2 + 4t + 1} = \frac{\sqrt{2\pi}}{(2t + 1)^2} = h_{11}(t).$$

\end{example}

Our next theorem allows one to increase the class of measures for which the periodization approach works.

\begin{theorem}\label{polygrowth}
Let $\mu$ be an even Poisson-finite measure such that $d\mu = h(x) d\nu$, where $\nu$ satisfies the conditions of theorem \ref{decay}, and $h$ is an even positive function increasing on $\mathbb{R}_+$, and satisfies $h(x) = O(\abs{x}^q)$ as $|x|\to\infty$ for some $q>0$. Then $h_{11}^{nc} \overset{\ast}{\to} h_{11}$ as $n \to \infty$.
\end{theorem}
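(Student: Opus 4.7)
The plan is to adapt the proof of Theorem \ref{decay} by combining its monotonicity argument for $\nu$ with the polynomial growth control on $h$. Because $\mu$ inherits the locally infinite support of $\nu$ on $[-C,C]$, Corollary \ref{PWper} gives that $\mu_{nc}$ is a PW-measure for $nc>C$, so the periodic algorithm of Corollary \ref{2Talgorithm} applies and produces $h_{11}^{nc}$. By Lemma \ref{SuffCondition} and the corollary following it, it suffices to prove pointwise convergence $K_0^{t,nc}(0)\to K_0^t(0)$ for every fixed $t>0$, and the main tool will be the variational characterization
\begin{equation*}
K_0^t(0)=\sup\bigl\{|f(0)|^2:\ f\in PW_t,\ \|f\|_{L^2(\mu)}\leqslant 1\bigr\}
\end{equation*}
together with its analogue for $\mu_{nc}$.

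The key new structural input is the factorization $d\mu_{nc}=h^{(nc)}\,d\nu_{nc}$, where $h^{(nc)}$ is the $2nc$-periodization of $h|_{[-nc,nc]}$. Since $h$ is even and increasing on $\R_+$, one has $h^{(nc)}\leqslant h$ on all of $\R$ and $h^{(nc)}(x)\leqslant h(nc)=O((nc)^q)$ for $|x|>nc$. For the lower bound $\liminf K_0^{t,nc}(0)\geqslant K_0^t(0)$, I would take $f\in PW_t$ near-extremal for $\mu$ with $\|f\|_{L^2(\mu)}=1$ and decompose
\begin{equation*}
\|f\|_{L^2(\mu_{nc})}^2-\|f\|_{L^2(\mu)}^2=\int_{|x|>nc}|f|^2\,d\mu_{nc}-\int_{|x|>nc}|f|^2\,d\mu.
\end{equation*}
The second integral vanishes by dominated convergence since $f\in L^2(\mu)$, and the first is bounded by $h(nc)\int_{|x|>nc}|f|^2\,d\nu_{nc}$; an adaptation of the \emph{standard argument used in the proof of Theorem \ref{PW}}, applied to the decreasing-window measure $\nu$ exactly as in Theorem \ref{decay}, should absorb the polynomial factor $h(nc)$ provided one exploits the additional decay of the specific extremal $f$ inherited from its membership in $L^2(h\,d\nu)$.

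The upper bound $\limsup K_0^{t,nc}(0)\leqslant K_0^t(0)$ is the more delicate half, since the tail-dominance $\mu_{nc}\geqslant\mu$ that drove Theorem \ref{decay} can now fail outright: polynomial growth of $h$ may cause $\mu$ to carry strictly more tail mass than its periodization. My plan is to take a near-extremal sequence $f_n\in PW_t$ for $\mu_{nc}$, use the pointwise lower bound $h^{(nc)}\geqslant h(0)>0$ together with a uniform-in-$n$ PW-sampling estimate for the $\nu_{nc}$'s to bound $\|f_n\|_{PW_t}$ independently of $n$, extract a weakly convergent subsequence $f_n\rightharpoonup f\in PW_t$, and combine continuity of point evaluation at $0$ with Fatou's lemma on the tails to conclude $\|f\|_{L^2(\mu)}\leqslant 1$ and hence $|f(0)|^2\leqslant K_0^t(0)$. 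The main obstacle I expect lies precisely here: establishing uniform-in-$n$ PW-sampling for $\nu_{nc}$ (which does not follow directly from Lemma \ref{uniformlb} because $\nu$ itself is not assumed to be PW) and the quantitative tail comparison along the extremal sequence will both require a careful interplay between the periodic structure of $\mu_{nc}$, the monotonicity property of $\nu$ from Theorem \ref{decay}, and the polynomial bound $h=O(|x|^q)$.
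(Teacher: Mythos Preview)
Both halves of your plan have genuine gaps, and they stem from the same missing idea.

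For the lower bound you need $h(nc)\int_{|x|>nc}|f|^2\,d\nu_{nc}\to 0$. You hope to absorb the factor $h(nc)=O((nc)^q)$ using the ``additional decay of the specific extremal $f$ inherited from its membership in $L^2(h\,d\nu)$''. But that membership only gives $\int_{|x|>nc}|f|^2\,d\nu=o(1/h(nc))$, i.e.\ decay measured against $\nu$. The integral you must control is against $\nu_{nc}$ (or, after the comparison from Theorem~\ref{decay}, against the fixed periodic measure $\nu_c$), and these can be much larger: if $\nu$ decays rapidly, $\nu_c$ still has mass bounded below on every interval, so $\int_{|x|>nc}|f|^2\,d\nu_c$ behaves like an $L^2(\R)$-tail of $f$, which for a generic $PW_t$ function need not decay faster than any prescribed polynomial rate. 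There is no mechanism within $PW_t$ to force the required $o((nc)^{-q})$ decay.

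For the upper bound you correctly identify the obstacle and then stop: uniform-in-$n$ sampling for $\nu_{nc}$ (or $\mu_{nc}$) simply fails in general. If $\nu$ decays at infinity, the periods of $\nu_{nc}$ carry most of their mass near $0$ and become arbitrarily sparse, so the lower sampling constant degenerates. Without a uniform bound on $\|f_n\|_{PW_t}$ the weak-compactness route collapses.

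The device the paper uses, which bypasses both difficulties at once, is to multiply the near-extremal functions by $\bigl(\sin(\delta x)/(\delta x)\bigr)^m$ with a fixed $m>q$. This factor is $\leqslant 1$, equals $1$ at $0$, and for $|x|$ large satisfies $\bigl(\sin(\delta x)/(\delta x)\bigr)^{2m}h(x)\leqslant 1$; thus on the tail it converts the weight $h\,d\nu$ (or $h^{(nc)}\,d\nu_{nc}$) back into $d\nu$ (resp.\ $d\nu_{nc}$), after which the monotonicity argument of Theorem~\ref{decay} applies verbatim. The price is that the product lies in $PW_{t+m\delta}$ rather than $PW_t$, so one obtains inequalities of the form $K_0^{t+m\delta}(0)\geqslant K_0^t(0)+\varepsilon/2$ or $K_0^{t+m\delta,n_kc}(0)\geqslant K_0^t(0)-\varepsilon/2$. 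Since $s\mapsto K_0^s(0)$ is nondecreasing and hence continuous off a countable set, restricting to continuity points and sending $\delta\to 0$ closes the argument. This trade---polynomial decay for an $m\delta$ increase in type, recovered via a.e.\ continuity in $t$---is the step your proposal is missing.
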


\begin{proof}
Without loss of generality, we may assume that $h(0) = 1$. If $h(0) \neq 1$, define $\Tilde{h}(x) = h(x)/h(0)$ for all $x$, and $\Tilde{\nu}(S) = h(0) \nu(S)$ for every $S \subseteq \mathbb{R}$.

Let $\mu_{nc}(x)$ be the periodization of $\mu$, and let $\nu_{nc}$ be the periodization of $\nu$. Then we have $ \mu_{nc} = h(x) \nu_{nc}$ on $[-nc, nc]$. Recall that $\nu_{nc}$ is a $2nc-$periodic PW-measure for $n$ big enough. Since for every set $S \subseteq [-nc, nc]$, $\nu_{nc}(S) \leqslant \mu_{nc}(S) \leqslant h(nc) \nu_{nc}(S)$, $\mu_{nc}$ is also a $2nc-$periodic PW-measure for $n$ big enough. Therefore, we can use the periodic algorithm described in Section \ref{PeriodicPW} to recover $h_{11}^{nc}$.

By lemma \ref{SuffCondition}, it suffices to show that $\lim_{n \to \infty} K_0^{t, nc}(0) = K_0^t(0)$ for almost every $t$. We will again use the following equivalent definition of $K_0^t(0)$: \begin{equation*}
    K_0^t(0) = \sup\{|f(0)|\ |\ \norm{f}_{L^2(\mu)} \leqslant 1, f \in PW_t\},
\end{equation*}
and 
\begin{equation*}
    K_0^{t, nc}(0) = \sup\{|f(0)|\ |\ \norm{f}_{L^2(\mu_{nc})} \leqslant 1, f \in PW_t\}.
\end{equation*}

Suppose $\lim_{n \to \infty} K_0^{t, nc}(0) \neq K_0^t(0)$. Then there exists $\epsilon > 0$, and $\{ n_k \}_{k \in \mathbb{N}}$ with $\lim_{k \to \infty} n_k = \infty$ such that \begin{equation}\label{limitfail}
    \abs{K_0^{t, n_kc}(0) - K_0^t(0)} > \epsilon, \quad \text{for all } k.
\end{equation}

Note that $K_0^s(0)$ is  an increasing function of $s$. Therefore it is continuous except possibly at a countable set of points. We assume that $K_0^s(0)$ and all $K_0^{s,n_k c}(0)$ are continuous at $t$.

Without loss of gererality, we consider the following two cases: \begin{enumerate}
    \item $K_0^{t, n_k c}(0) - K_0^t(0) > \epsilon$ for all $k$,
    \item $K_0^t(0) - K_0^{t, n_k c}(0) > \epsilon$ for all $k$.
\end{enumerate}

We start by showing that the first case can never happen. Let $f^{n_kc} \in PW_t$ with $f^{n_kc}(0) = K_0^t(0) + \epsilon/2$, and $\norm{f^{n_kc}}_{L^2(\mu_{n_kc})} \leqslant 1$. We identify the function $\frac{\sin(\delta x)}{\delta x}$ with its contionuous extension \begin{equation*}
    \begin{cases}
    \frac{\sin(\delta x)}{\delta x}, \quad &x \neq 0\\
    1 & x = 0
    \end{cases}.
\end{equation*} Fix $m > q$ and let $\delta > 0$. Then there exists $M_\delta$ such that for every $g \in PW_t$ and $M > M_\delta$, \begin{equation*}
    \int_{\abs{x} > M} \abs{g(x)}^2 \left( \frac{\sin(\delta x)}{\delta x} \right)^{2m} h(x) d\nu(x) \leqslant  \int_{\abs{x} > M} \abs{g(x)}^2 d\nu(x).
\end{equation*}
Choose $k$ large enough such that $n_k c > M_{\delta}$. Then \begin{equation*}
\begin{split}
    &\norm{f^{n_k c}(x) \left( \frac{\sin(\delta x)}{\delta x} \right)^m}_{L^2(\mu)}^2 = \int_{-\infty}^\infty \abs{ f^{n_k c}(x) }^2 \left( \frac{\sin(\delta x)}{\delta x} \right)^{2m} h(x) d\nu(x)\\
    &= \int_{\abs{x} > n_k c}  \abs{f^{n_k c}(x) }^2 \left( \frac{\sin(\delta x)}{\delta x} \right)^{2m} h(x) d\nu(x) + \int_{\abs{x} \leqslant n_k c}  \abs{ f^{n_k c}(x) }^2 \left( \frac{\sin(\delta x)}{\delta x} \right)^{2m} h(x) d\nu(x) \\
    & \leqslant \int_{\abs{x} > n_k c} \abs{ f^{n_k c}(x) }^2 d\nu(x) + \int_{\abs{x} \leqslant n_k c}  \abs{ f^{n_k c}(x) }^2 h(x) d\nu(x).
\end{split}    
\end{equation*}

Recalling that $\nu$ is satisfies the conditions of theorem \ref{decay}, similarly to its proof we obtain \begin{equation*}
    \int_{\abs{x} > n_k c} \abs{ f^{n_k c}(x) }^2 d\nu(x) \leqslant \int_{\abs{x} > n_k c} \abs{ f^{n_k c}(x) }^2 d\nu_{n_k c}(x).
\end{equation*}

Also recall that $d\mu(x) = h(x) d\nu(x)$, so \begin{equation*}
    \int_{\abs{x} \leqslant n_k c}  \abs{ f^{n_k c}(x) }^2 h(x) d\nu(x) = \int_{\abs{x} \leqslant n_k c}  \abs{ f^{n_k c}(x) }^2 d\mu(x) = \int_{\abs{x} \leqslant n_k c}  \abs{ f^{n_k c}(x) }^2 d\mu_{n_k c}(x).
\end{equation*}

Therefore, \begin{equation*}
\begin{split}
    &\norm{f^{n_k c}(x) \left( \frac{\sin(\delta x)}{\delta x} \right)^m}_{L^2(\mu)}^2 \leqslant \int_{\abs{x} > n_k c} \abs{ f^{n_k c}(x) }^2 d\nu_{n_k c}(x) + \int_{\abs{x} \leqslant n_k c}  \abs{ f^{n_k c}(x) }^2 d\mu_{n_k c}(x) \\
    &\leqslant \int_{\abs{x} > n_k c} \abs{ f^{n_k c}(x) }^2 d\mu_{n_k c}(x) + \int_{\abs{x} \leqslant n_k c}  \abs{ f^{n_k c}(x) }^2 d\mu_{n_k c}(x) = \int_\mathbb{R}  \abs{ f^{n_k c}(x) }^2 d\mu_{n_k c}(x) \leqslant 1.
\end{split}    
\end{equation*}

Since $f^{n_k c}(x) \left( \frac{\sin(\delta x)}{\delta x} \right)^m \in PW_{t + \delta m}$, we have for every $\delta > 0$, \begin{equation*}
    K_0^{t + \delta m}(0) \geqslant f^{n_k c}(0) = K_0^t(0) + \epsilon/2.
\end{equation*}
By our assumption $s=t$ is a point of continuity of $K_0^s(0)$ as a function of $s$. Therefore, letting $\delta$ tend to $0$, we obtain
$$K_0^t(0) \geqslant K_0^t(0) + \epsilon/2$$
for some $\epsilon > 0$, which is a contradiction.

Now we move on to the second case. Let $f \in PW_t$ with $$f(0) = K_0^t(0) - \epsilon/2,\text{ and } \norm{f}_{L^2(\mu)} \leqslant 1.$$ 
We write the $2T-$periodization of $\mu$ as $d\mu_{T}(x) = h_{T}(x) d\nu_{T}(x)$. 

Like in the previous case, fix $m > q$ and let $\delta > 0$. There exists $M_\delta$ such that for every $g \in PW_t$ and $M > M_\delta$, \begin{equation*}
    \int_{\abs{x} > M} \abs{g(x)}^2 \left( \frac{\sin(\delta x)}{\delta x} \right)^{2m} h(x) d\nu(x) \leqslant  \int_{\abs{x} > M} \abs{g(x)}^2 d\nu(x).
\end{equation*}
It follows from our assumption on $h(x)$ that for every $k$, \begin{equation*}
    \int_{\abs{x} > M} \abs{g(x)}^2 \left( \frac{\sin(\delta x)}{\delta x} \right)^{2m} h_{n_k c}(x) d\nu_{n_k c}(x) \leqslant  \int_{\abs{x} > M} \abs{g(x)}^2 d\nu_{n_k c}(x).
\end{equation*}

Let $k$ be big enough so that $n_k c > M_\delta$. Then \begin{equation*}
\begin{split}
    &\abs{\norm{f(x) \left(\frac{\sin(\delta x)}{\delta x}\right)^m}_{L^2(\mu_{n_k c})}^2 - \norm{f(x) \left(\frac{\sin(\delta x)}{\delta x}\right)^m}_{L^2(\mu)}^2} \\
    &= \left|\int_{\abs{x} > n_k c} \abs{f(x)}^2 \left( \frac{\sin(\delta x)}{\delta x} \right)^{2m} h_{n_k c}(x) d\nu_{n_k c}(x) - \int_{\abs{x} > n_k c} \abs{f(x)}^2 \left( \frac{\sin(\delta x)}{\delta x} \right)^{2m} h(x) d\nu(x) \right| \\
    &\leqslant \int_{\abs{x} > n_k c} \abs{f(x)}^2 d\nu_{n_k c}(x) + \int_{\abs{x} > n_k c} \abs{f(x)}^2 d\nu(x) \leqslant 2 \int_{\abs{x} > n_k c} \abs{f(x)}^2 d\nu_c(x) \to 0 \text{~as~} k \to \infty.
\end{split}    
\end{equation*}

This implies \begin{equation*}
    \lim_{k \to \infty} \norm{f(x) \left(\frac{\sin(\delta x)}{\delta x}\right)^m}_{L^2(\mu_{n_k c})} =  \norm{f(x) \left(\frac{\sin(\delta x)}{\delta x}\right)^m}_{L^2(\mu)} < \norm{f(x)}_{L^2(\mu)} \leqslant 1.
\end{equation*}

Therefore for $k$ big enough, $K_0^{t + \delta m, n_k c}(0) \geqslant f(0) = K_0^t(0) - \epsilon/2$. Tending
$\delta$ and then $\epsilon$ to $0$ we obtain the statement.

\end{proof}

The following particular case can be proved in a similar way or deduced from the last statement.

\begin{corollary}\label{CorPolygrowth}
Let $\mu$ be a positive even absolutely continuous locally-finite measure, whose density is the product of $h(x)$, an even positive function increasing on $\mathbb{R}_+$, satisfying $h(x) = O(\abs{x}^q)$ as $|x|\to\infty$ for some $q>0$, and $f(x)$, an even non-negative function decreasing on $\mathbb{R}_+$. Then
$$h_{11}^T\overset{\ast}{\to} h_{11}$$
as $T\to\infty$.
\end{corollary}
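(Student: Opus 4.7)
The plan is to verify that the argument of Theorem \ref{polygrowth} carries over, with almost no modification, from the discrete sequence $T=nc$ to the full continuous limit $T\to\infty$ once the stronger hypotheses of Corollary \ref{CorPolygrowth} are in force. Set $d\nu(x)=f(x)\,dx$ so that $d\mu=h(x)\,d\nu$. Since $f$ is even, non-negative, and decreasing on $\R_+$, its density is bounded by $f(0)$; this makes $\nu$ Poisson-finite, and makes the hypotheses on $(h,\nu)$ match those of Theorem \ref{polygrowth} (apart from the discrete-vs-continuous distinction in $T$). Thus Theorem \ref{polygrowth} already yields $h^{nc}_{11}\overset{\ast}{\to}h_{11}$ for every fixed $c>0$; the new content is the full continuous limit.

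By Lemma \ref{SuffCondition} it suffices to show $K_0^{t,T}(0)\to K_0^t(0)$ for almost every $t$ as $T\to\infty$. Suppose for contradiction this fails at some continuity point $t$ of $s\mapsto K_0^s(0)$ and some sequence $T_k\to\infty$. One then mimics the case analysis in Theorem \ref{polygrowth}'s proof with $n_k c$ replaced by $T_k$: take near-optimal test functions $g\in PW_t$ and multiply by $(\sin(\delta x)/(\delta x))^m$ with $m>q$ to absorb the polynomial growth of $h$ on the tail $\{|x|>T_k\}$, where for $T_k$ large one has $(\sin(\delta x)/(\delta x))^{2m}h(x)\leqslant 1$. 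The two tail inequalities used in that proof both persist for all $T>0$ under our stronger assumptions: first, for $|y|>T$ the density of $\nu_T$ at $y$ equals $f(|y-2kT|)$ with $|y-2kT|<T<|y|$, and hence dominates $f(|y|)$, giving $\nu(S)\leqslant\nu_T(S)$ for $S\subseteq\{|x|>T\}$; second, since the density of $\nu_T$ is bounded everywhere by $f(0)$, one has $\int_{|x|>T}|g|^2\,d\nu_T\leqslant f(0)\int_{|x|>T}|g|^2\,dx\to 0$ as $T\to\infty$ for every $g\in L^2(\R)$, and in particular for every $g\in PW_t$. Plugging these into the sinc-localization bounds of Theorem \ref{polygrowth} and letting $\delta\to 0$ produces the contradiction in both the upper-bound and lower-bound cases.

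The main obstacle is a bookkeeping check rather than a substantive one: verifying that the single place in Theorem \ref{polygrowth}'s proof where the discrete structure of $\{nc\}$ is genuinely used — namely the comparison $\int_{|x|>n_k c}|g|^2\,d\nu_{n_k c}\leqslant \int_{|x|>n_k c}|g|^2\,d\nu_c$, which rests on the $2c$-grid being a refinement of the $2n_k c$-grid — can be replaced by the uniform density bound $\nu_T\leqslant f(0)\cdot m$, where $m$ denotes Lebesgue measure. This replacement is exactly where the pointwise monotonicity of $f$ (as opposed to monotonicity only in $c$-increments) enters, and it is what allows Corollary \ref{CorPolygrowth} to be formulated with $T\to\infty$ rather than only $T=nc\to\infty$.
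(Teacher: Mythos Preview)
Your proposal is correct and follows exactly the route the paper itself suggests: the paper does not give a detailed proof of this corollary but states only that it ``can be proved in a similar way or deduced from the last statement,'' and you have carried out the ``similar way'' option, correctly isolating the one place where the lattice $\{nc\}$ was genuinely used in Theorem~\ref{polygrowth} (the comparison $\nu_{n_kc}\leqslant\nu_c$ on the tail) and replacing it with the cleaner bound $d\nu_T\leqslant f(0)\,dm$ afforded by pointwise monotonicity of the density. Your observation that $\nu(S)\leqslant\nu_T(S)$ for $S\subseteq\{|x|>T\}$ now holds for every real $T>0$ (not just multiples of $c$) because $f(|y-2kT|)\geqslant f(|y|)$ is precisely the continuous analogue of the shift inequality used in Theorem~\ref{decay}, so the rest of the argument ports over verbatim.
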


We finish with the following numerical illustrations for the last theorem. 

\begin{example}
Consider the canonical system with $d\mu(x) = (1 + \abs{x}^\frac{1}{2})dx$. This measure is not a PW-measure, but satisfies the assumptions of corollary \ref{CorPolygrowth}.

Here are four approximations with $T = \pi, 2\pi, 4\pi, 8\pi$.

\begin{figure}[H]
\centering
\includegraphics[width=0.45\textwidth]{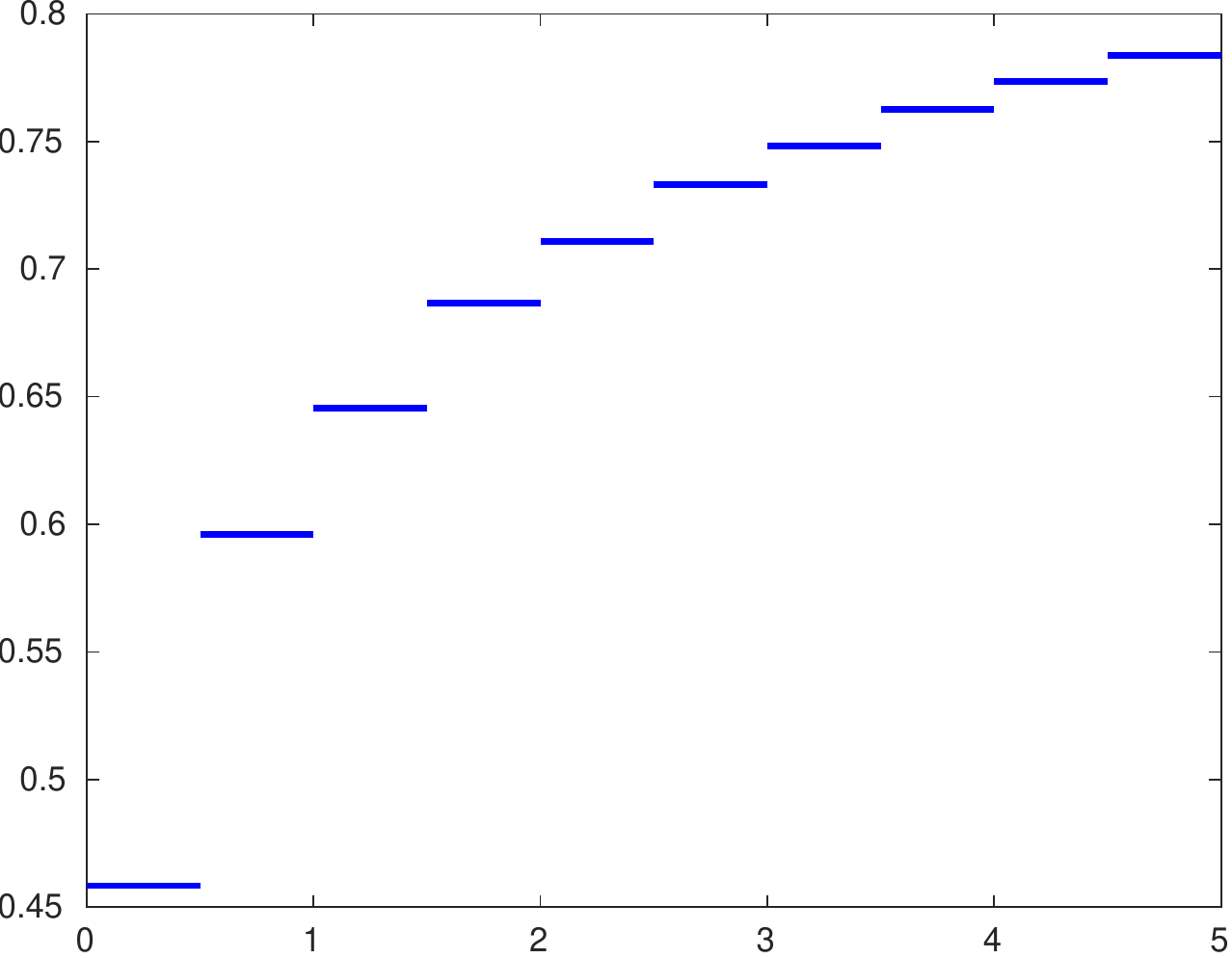}
\includegraphics[width=0.45\textwidth]{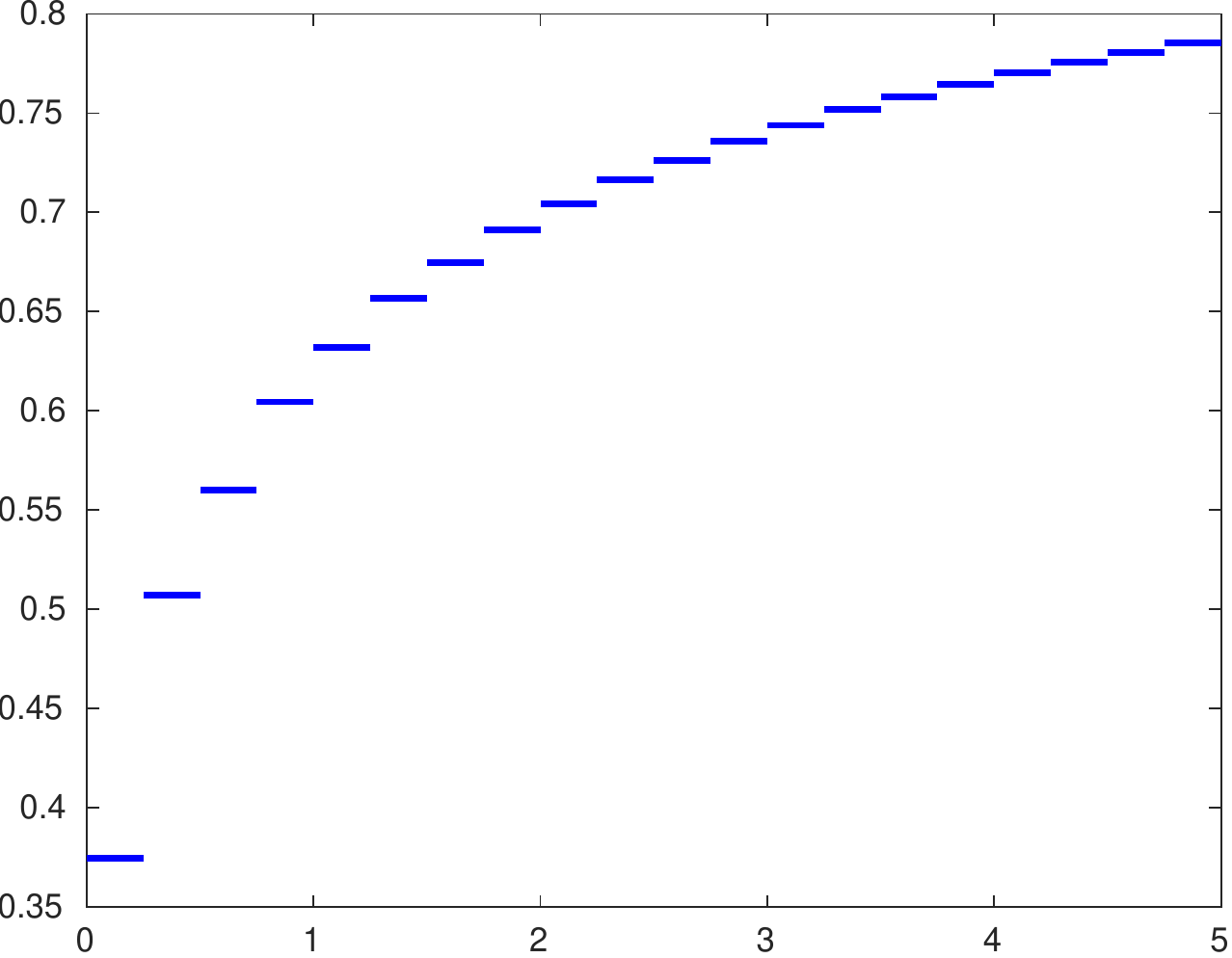}
\includegraphics[width=0.45\textwidth]{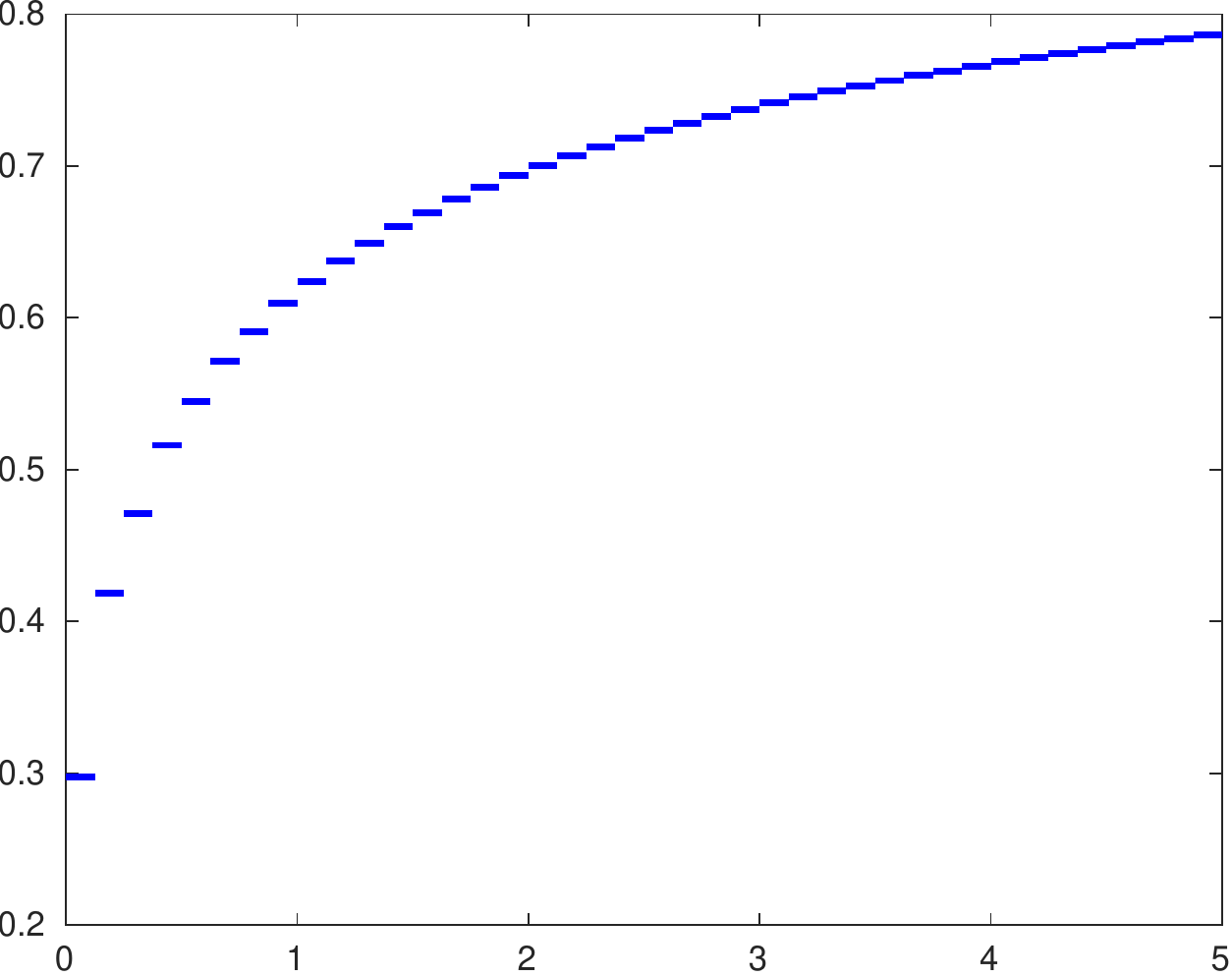}
\includegraphics[width=0.45\textwidth]{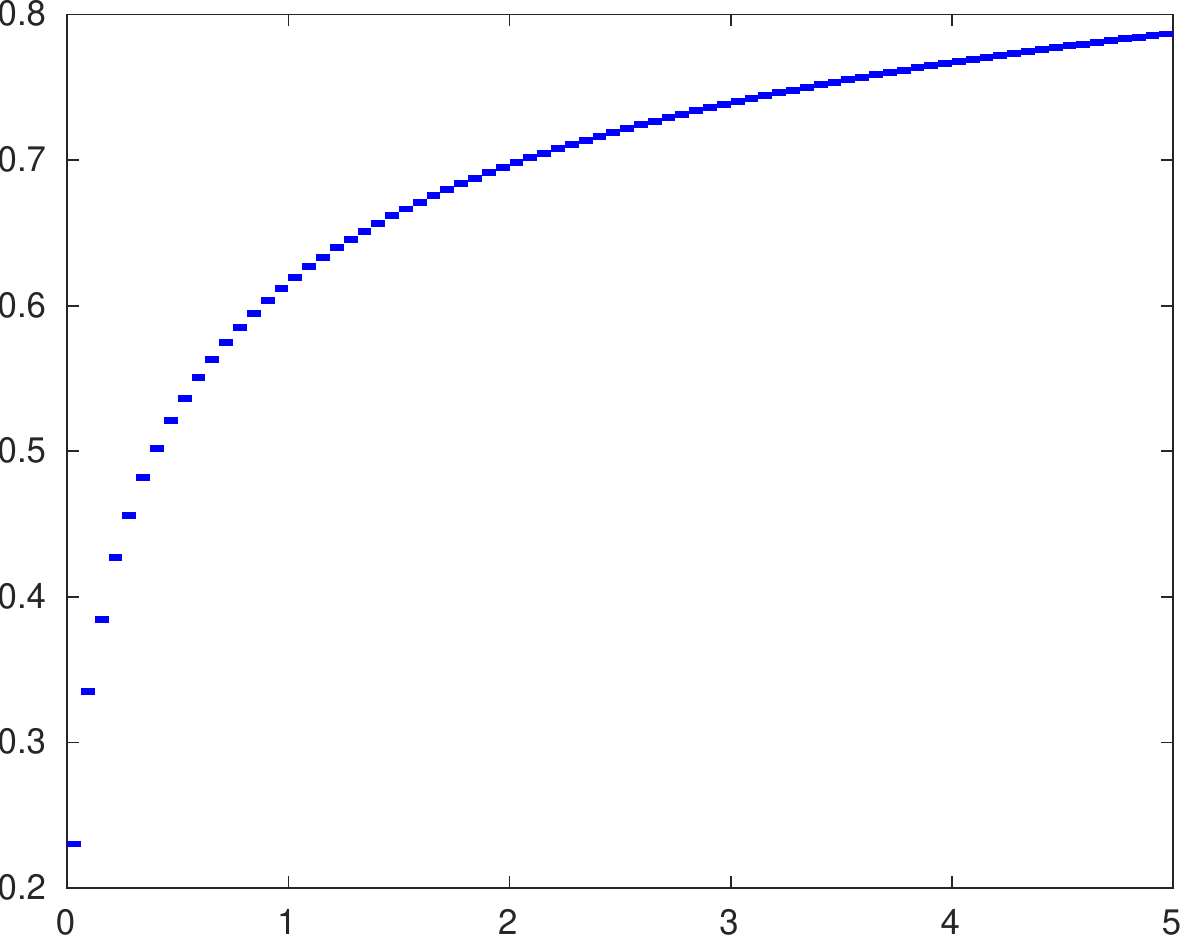}
\caption{Upper left: $h_{11}^{\pi}$. Upper right: $h_{11}^{2\pi}$. Lower left: $h_{11}^{4\pi}$. Lower right: $h_{11}^{8\pi}$.}
\end{figure}
\end{example}

\begin{example}
Consider the canonical system with $\mu(x) = (1 + \abs{x})^{\frac{1}{4}} m + \delta_0$, where $m$ is the Lebesgue measure and $\delta_0$ is the unit point mass at $0$. This measure is not a PW-measure, but satisfies the assumptions of theorem \ref{polygrowth}.

Here are four approximations with $T = \pi, 2\pi, 4\pi, 8\pi$.

\begin{figure}[H]
\centering
\includegraphics[width=0.45\textwidth]{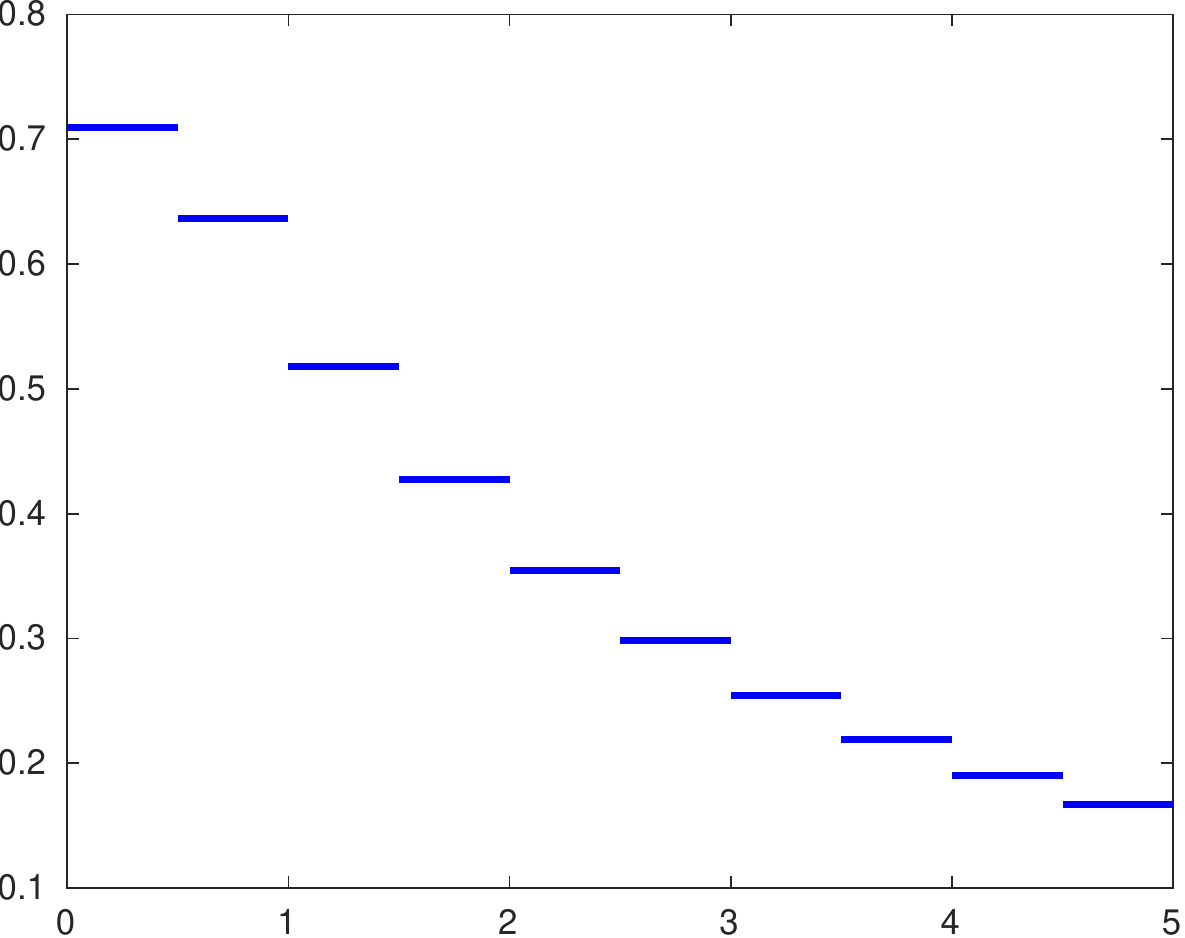}
\includegraphics[width=0.45\textwidth]{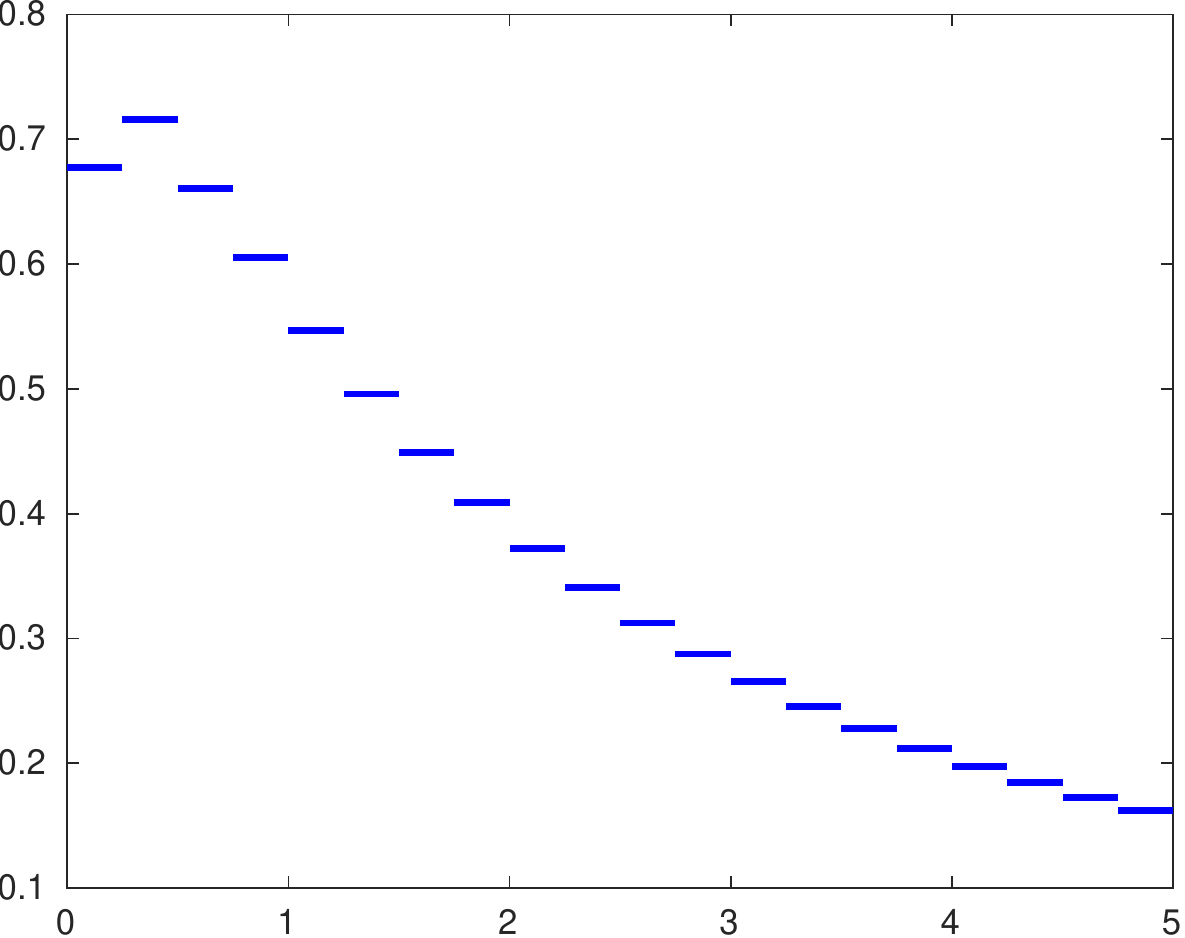}
\includegraphics[width=0.45\textwidth]{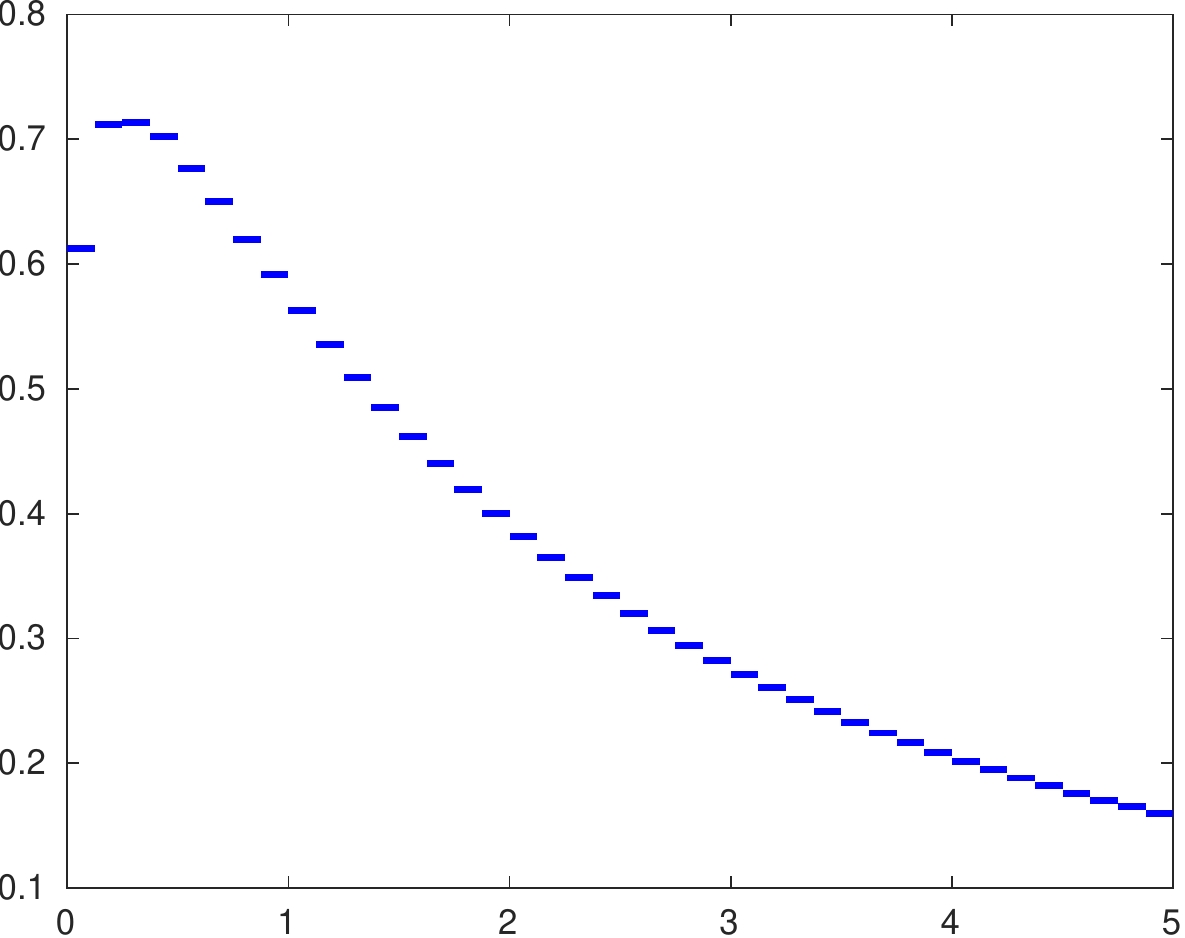}
\includegraphics[width=0.45\textwidth]{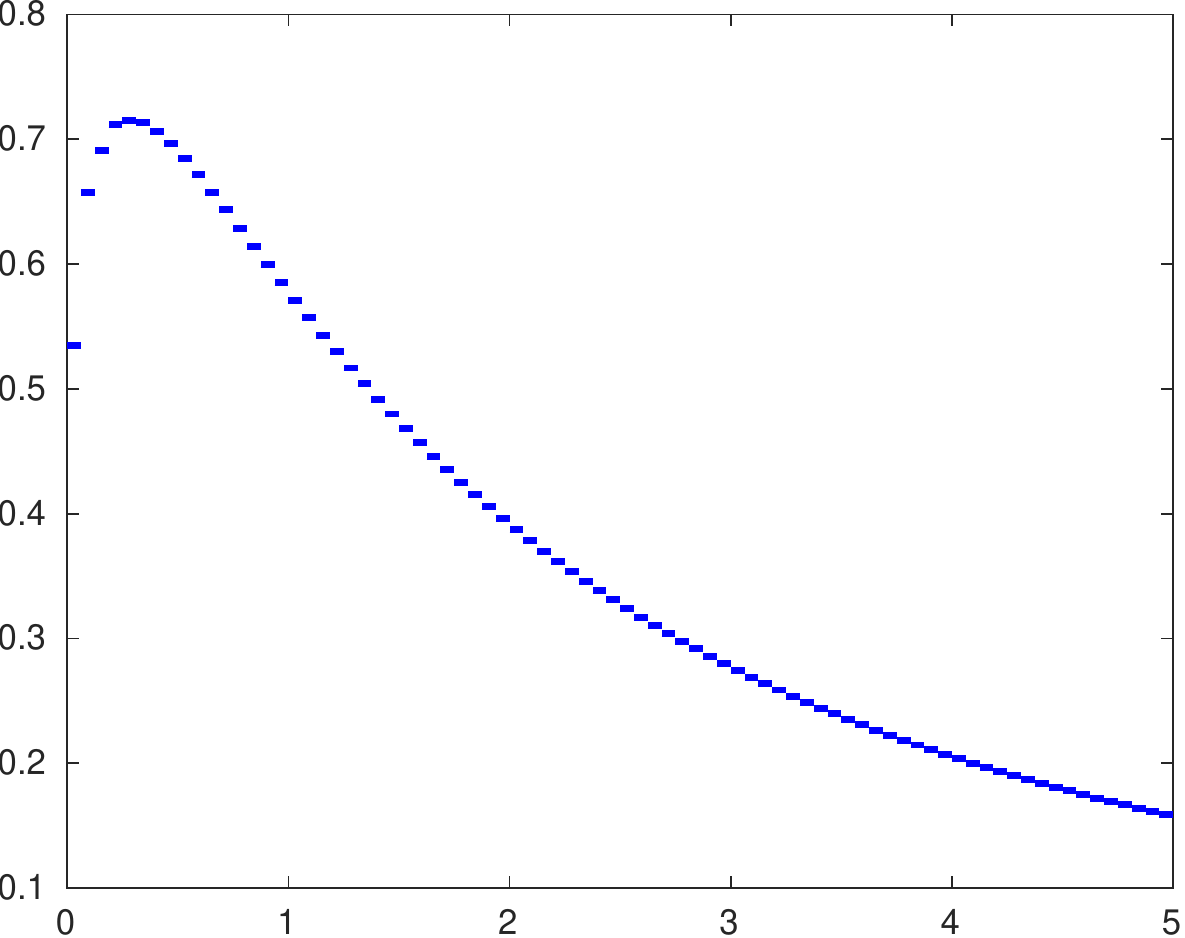}
\caption{Upper left: $h_{11}^{\pi}$. Upper right: $h_{11}^{2\pi}$. Lower left: $h_{11}^{4\pi}$. Lower right: $h_{11}^{8\pi}$.}
\end{figure}
\end{example}

\newpage

\end{document}